\providecommand{\U}[1]{\protect\rule{.1in}{.1in}}
\newtheorem{theorem}{Theorem}
\newtheorem{corollary}[theorem]{Corollary}
\newtheorem{definition}[theorem]{Definition}
\newtheorem{example}[theorem]{Example}
\newtheorem{lemma}[theorem]{Lemma}
\newtheorem{proposition}[theorem]{Proposition}
\newtheorem{remark}[theorem]{Remark}
\newenvironment{proof}[1][Proof]{\noindent\textbf{#1} }{\ \rule{0.5em}{0.5em}}
\newcommand{\beq} {\begin{eqnarray*}}
\newcommand{\eeq} {\end{eqnarray*}}
\def \R{\mathbb{R}}
\def \N{\mathbb{N}}
\def \F{\mathbb{F}}
\def \G{\mathbb{G}}
\def \E{\mathbb{E}}
\def \M{\mathbb{M}}
\def \P{\mathbb{P}}
\newcommand{\1}{{1\hspace{-0.2ex}\rule{0.12ex}{1.61ex}\hspace{0.5ex}}}
\title{A Central Limit Theorem for Wasserstein type distances between two different laws} 
\date{} 
\author[1]{Philippe Berthet}\author[2]{Jean Claude Fort} \author[1,3]{Thierry Klein\thanks{Corresponding author \texttt{thierry.klein@math.univ-toulouse.fr } } } 
\affil[1]{Institut de math\'ematique, UMR5219; Universit\'e de Toulouse;  CNRS, UPS IMT, F-31062 Toulouse Cedex 9, France} 
\affil[2]{MAP5, Universit\'e Paris Descartes, France} 
  \affil[3]{ENAC - Ecole Nationale de l'Aviation Civile, Universit\'e de Toulouse, France}
\begin{document}

\maketitle 

\begin{abstract}
This article is dedicated to the estimation of Wasserstein distances and Wasserstein costs between two distinct continuous distributions  $F$ and $G$ on $\mathbb R$. The estimator is based on the order statistics of  (possibly dependent) samples of $F$  resp. $G$. We prove the consistency  and the asymptotic normality of our estimators. \\
\begin{it}Keywords:\end{it} \\Central Limit Theorems-
Generelized Wasserstein distances-
Empirical processes-
Strong approximation-
Dependent samples.\\
\begin{it} MSC Classification:\end{it}\\62G30, 62G20, 60F05, 
60F17 
\end{abstract}

\tableofcontents
\section{Introduction}\label{sec:intro}
\subsection{Motivation}
In this article we address the problem of estimating the distance between two different distributions with respect to a class of Wasserstein costs that we define in the sequel. The framework is very simple:  two samples of $i.i.d.$ real random variables taking values in $\mathbb R$  with continuous cumulative distribution function ($c.d.f.$) $F$ and $G$ are available. These samples are not necessarily independent, for instance they may be issued from simultaneous experiments. From these samples we estimate the Wasserstein distances or costs between $F$ and $G$ and we prove a central limit theorem (CLT).

The motivation of this work is to be found in the fast development of computer experiments. Nowadays the output of many computer codes is not only a real multidimensional variable but frequently a function computed on so many points that it can be considered as a functional output. In particular this function may be the density or $c.d.f.$ of a real random variable. To analyze such outputs one  needs to choose a distance to compare various $c.d.f.$. Among the large possibilities offered by the literature   the Wasserstein distances are now commonly used - for more details on general Wasserstein distances we refer to \cite{villani2003tot}. Since computer codes only provide samples of the underlying distributions, the estimation of such distances are of primordial importance.  Actually for univariate probability distributions the $p$-Wasserstein distance  simply is the $L^p$ distance of simulated random variables from a common and universal (uniform on $[0,1]$) simulator  $U$: $W_p^p(F,G)=\int_0^1|F^{-1}(u)-G^{-1}(u)|^p du=\mathbb E |F^{-1}(U)-G^{-1}(U)|^p$, where $F^{-1}$ is the generalized inverse of $F$. It is then natural to estimate $W_p^p(F,G)$ by its empirical counterpart that is $W_p^p(\mathbb{F}_n,\mathbb{G}_n)$ where $\mathbb{F}_n$ and $\mathbb{G}_n$ are the empirical $c.d.f.$ of $F$ and $G$ build through $i.i.d.$ samples of $F$ and $G$ - the two samples are possibly dependent.

Many authors were interested in the convergence of $W_p^p(\mathbb{F}_n,F)$, see $e.g.$ the survey paper \cite{Ledoux17} or  \cite{Delbarrio05,Delbarrio99,Delbarrio03,Delbarrio11}. Up to our knowledge there are only two  recent works studying the convergence of $W_2^2(\mathbb{F}_n,\mathbb{G}_n)$ \cite{dBL,Munk}. In \cite{dBL} very general results are obtained in the multivariate setting when the two samples are independent. However the estimator in not explicit from the data, the centering in the CLT is $\mathbb EW_2^2(\mathbb F_n,\mathbb G_n)$ rather than $W_2^2(F,G)$ itself, and the limiting variance is also not explicit. In \cite{Munk} multivariate discrete distributions and $W_2$ cost are considered, again for independent samples, and the CLT is explicit.\\
To investigate more deeply the univariate setting we allow a larger class of convex costs and also dependent $i.i.d.$ samples from continuous $c.d.f.$ $F$ and $G$. We look for an explicit CLT for the easily computed natural estimator, under almost minimal conditions relating $F$ and $G$ to the cost.

\subsection{Setting}
Let $F$ and $G$ be two $c.d.f.$ on $\mathbb R$. The $p$-Wasserstein distance between $F$ and $G$ is defined to be
\begin{equation}\label{Wass}
W^p_p(F,G)= \min_{X\sim F,Y\sim G}\mathbb E|X-Y|^p,
\end{equation}
where $X\sim F,Y\sim G$ means that X and Y are  random variables with respective c.d.f. $F$ and $G$.
The minimum in (\ref{Wass}) has the following  explicit expression

\begin{equation}\label{Frechet}
W^p_p(F,G)= \int_0^1 |F^{-1}(u)-G^{-1}(u)|^p du. 
\end{equation}

\noindent The Wassertein distances can be generalized to Wasserstein costs. Given a real non negative function $c(x,y)$ of two real variables, we consider the Wasserstein cost
\begin{equation}\label{Wassc}
W_c(F,G)= \min_{X\sim F,Y\sim G}\mathbb E c(X,Y).
\end{equation}
We restrict our study to costs for which  this minimum is finite and  the analogue of \eqref{Frechet} exists.
\begin{definition}
We call a good cost function any application $c$  from $\mathbb R^{2}$ to $\mathbb R$ that defines a negative measure on $\mathbb{R}^2$. It satisfies the "measure  property" $\mathbf{\mathcal P}$,
$$
\mathbf{\mathcal P}: \forall x\leqslant x'\ \mathrm{and\ } \forall y\leqslant y',\  c(x',y')-c(x',y)-c(x,y')+c(x,y)\leqslant 0.
$$
\end{definition}
\begin{remark}\label{cpositive} It is obvious that  $c(x,y)=-xy$ satisfies the $\mathbf{\mathcal P}$ property and  if $c$ satisfies $\mathbf{\mathcal P}$ then any function of the form $a(x)+b(y)+c(x,y)$ also  satisfies $\mathbf{\mathcal P}$. In particular $(x-y)^2=x^2+y^2-2xy$ satisfies $\mathbf{\mathcal P}$.
 More generally if $\rho$ is a convex real function then $c(x,y)=\rho(x-y)$ satisfies $\mathbf{\mathcal P}$. This is the case of  $|x-y|^p$, $p\geqslant 1$ and for the cost associated to the $\alpha$-quantile $c(x,y)=(x-y) (\alpha-{\bf 1}_{x-y<0})$. 
\end{remark}
The following theorem that can be found in \cite{Cambanis76}  gives an explicit formula of $W_c$ for    cost functions satisfying property $\mathcal P$.

\begin{theorem}[Cambanis, Simon, Stout \cite{Cambanis76}]\label{theo:cam} Let $c$  satisfy the "measure  property"  $\mathcal P $ and  $U$ be a random variable uniformly distributed on $[0,1]$, then 
$$ W_c(F,G)=\int_0^1 c(F^{-1}(u),G^{-1}(u))du=\mathbb E\ c(F^{-1}(U),G^{-1}(U)).$$
\end{theorem}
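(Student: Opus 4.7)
The identity $\int_0^1 c(F^{-1}(u),G^{-1}(u))\,du = \mathbb{E}\, c(F^{-1}(U),G^{-1}(U))$ is just the definition of expectation for $U\sim \mathrm{Unif}[0,1]$, so the substance of the theorem is to show that the comonotone coupling $(F^{-1}(U),G^{-1}(U))$ attains the minimum in \eqref{Wassc}. The plan combines a Hoeffding-type representation of $\mathbb{E}\,c(X,Y)$ with the Fr\'echet--Hoeffding bound for bivariate c.d.f.'s with fixed marginals.

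\noindent\textbf{Step 1 (Hoeffding identity).} By property $\mathcal{P}$, the Stieltjes measure $\mu_c$ associated with $c$ is non-positive, so $\nu:=-\mu_c\geq 0$. Fix a reference point $(x_0,y_0)$ and start from
\begin{equation*}
c(x,y)-c(x,y_0)-c(x_0,y)+c(x_0,y_0) \;=\; -\nu\bigl([x_0,x]\times[y_0,y]\bigr),\qquad (x\geq x_0,\ y\geq y_0),
\end{equation*}
with analogous signed formulas in the remaining three quadrants. Taking expectation under any coupling $(X,Y)$ with marginals $F,G$ and applying Fubini yields
\begin{equation*}
\mathbb{E}\,c(X,Y) \;=\; \alpha(F,G) \;-\; \int_{\mathbb{R}^2}\mathbb{P}(X\geq s,\,Y\geq t)\,d\nu(s,t),
\end{equation*}
where $\alpha(F,G):=\mathbb{E}\,c(X,y_0)+\mathbb{E}\,c(x_0,Y)-c(x_0,y_0)$ depends only on the marginals. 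Writing $\mathbb{P}(X\geq s,Y\geq t)=1-F(s^-)-G(t^-)+H(s^-,t^-)$, where $H$ is the joint c.d.f.~of $(X,Y)$, the only coupling-dependent contribution reduces to $-\int_{\mathbb{R}^2} H(s^-,t^-)\,d\nu(s,t)$.

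\noindent\textbf{Step 2 (Fr\'echet--Hoeffding bound and conclusion).} Every bivariate c.d.f.~$H$ with marginals $F$ and $G$ satisfies $H(s,t)\leq M(s,t):=\min(F(s),G(t))$, with pointwise equality for the comonotone coupling since $\mathbb{P}(F^{-1}(U)\leq s,\,G^{-1}(U)\leq t)=M(s,t)$. As $\nu\geq 0$, maximizing $\int H\,d\nu$ pointwise amounts to choosing $H=M$, which corresponds precisely to the comonotone coupling. Hence this coupling minimizes $\mathbb{E}\,c(X,Y)$, and the minimum value equals $\mathbb{E}\,c(F^{-1}(U),G^{-1}(U))=\int_0^1 c(F^{-1}(u),G^{-1}(u))\,du$.

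\noindent\textbf{Main obstacle.} The key technical point is justifying the Hoeffding identity on the whole of $\mathbb{R}^2$: the measure $\nu$ is only $\sigma$-finite, and one must ensure integrability of the various terms and the vanishing of boundary contributions at infinity. A standard approach is to establish the identity first on compact rectangles (e.g.~by truncating $F$ and $G$ to compactly supported approximations), then pass to the limit via dominated convergence, using the standing assumption $W_c(F,G)<\infty$ to control the tails.
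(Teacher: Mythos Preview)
The paper does not give its own proof of this statement: Theorem~\ref{theo:cam} is quoted verbatim from Cambanis, Simons and Stout \cite{Cambanis76} and used as a black box. There is therefore nothing to compare your argument against in the paper itself.

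That said, your sketch is sound and is essentially the classical route (and close in spirit to the original proof in \cite{Cambanis76}): a Hoeffding-type covariance identity expresses $\mathbb{E}\,c(X,Y)$ as a marginal-only term minus $\int H\,d\nu$ with $\nu=-\mu_c\geqslant 0$, and then the Fr\'echet--Hoeffding upper bound $H\leqslant \min(F,G)$, attained by the comonotone coupling $(F^{-1}(U),G^{-1}(U))$, yields the minimum. Your identification of the only real technical issue --- justifying the Hoeffding identity globally when $\nu$ is merely $\sigma$-finite, via truncation and passage to the limit under the finiteness assumption on $W_c(F,G)$ --- is exactly right; this is where the work lies in \cite{Cambanis76} as well. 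One small wording point: in Step~2 you want to \emph{maximize} $\int H\,d\nu$ because the coupling-dependent term enters with a minus sign, so that minimizing $\mathbb{E}\,c(X,Y)$ corresponds to maximizing $\int H\,d\nu$; you have this right, but it is worth stating the sign explicitly since it is the whole point of property~$\mathcal{P}$.
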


In view of Theorem \ref{theo:cam}, an estimator of  $W_{c}(F,G)$ based on a sample from  the joint distribution of $(F^{-1}(U),G^{-1}(U))$ seems the most natural one. Nevertheless, it is not necessary and one can sample from any coupling of the marginal c.d.f. This is very interesting  in  practice, since we can use  experimental data without any assumption on the coupling structure. We will see that it only affects the limiting variance in the CLT but not the rate of convergence. \\

\noindent Let $\displaystyle (X_i,Y_i)_{1\leqslant i\leqslant n}$ be an   $i.i.d.$ sample of a random vector with distribution $\Pi$ and marginal c.d.f. $F$ and $G$. 
Write $\F_n$ and $\G_n$ the random empirical $c.d.f.$ built from the two marginal samples. Let $c$ a good cost function. Denote by $X_{(i)}$ (resp.  $Y_{(i)}$) the $i^{th}$ order statistic of the sample $\displaystyle (X_i)_{1\leqslant i\leqslant n}$ (resp. $\displaystyle (Y_i)_{1\leqslant i\leqslant n}$), i.e.  $X_{(1)}\leqslant \ldots\leqslant X_{(n)}$. We have
\begin{equation}
W_c(\F_n,\G_n)=\frac{1}{n}\sum_{i=1}^n c(X_{(i)},Y_{(i)}). 
\end{equation}
Thanks to Theorem \ref{theo:cam},  $W_c(\F_n,\G_n)$ is a natural estimator of 
$W_c(F,G)$. The aim of this paper is to study its asymptotic properties when $F \ne G$ and $F$ and $G$ are continuous. 
Our main result is the weak convergence of  
\begin{equation}
\sqrt n \left(W_{c}(\F_{n},\G_{n})-W_{c}(F,G)\right).\label{Lambdan}
\end{equation}

\subsection{Overview of the paper.}

\noindent\textbf{Organization.} The paper is organized as follows. Assumptions are discussed in Section \ref{sec:hyp}. In Section  \ref{sec:main} we state our main result in the form of a CLT for 
$\displaystyle\sqrt n \left(W_{c}(\F_{n},\G_{n})-W_{c}(F,G)\right)$. A few prospects are presented in Section  \ref{sec:concl}.
All the results are proved in Section  \ref{sec:proof}. Section  \ref{sec:app} contains the proofs of technical results used in the previous section and complements on the assumptions. \smallskip

\noindent\textbf{About the assumptions.} In order to control the integrals $W_{c}(F,G)$ and $W_{c}(\F_{n},\G_{n})$
we separate out three sets of assumptions. First,
about the regularity of $F$ and $G$ and  the separation of their tails, with the convention that $G$ has a
lighter tail. Second, on the
rate of increase, the regularity, the asymptotic expansion of $c$ and the behaviour of $c(x,y)$ close to the diagonal $y=x$. The first
two sets are hereafter labelled $(FG)$ and $(C)$ respectively. They allow to
separately select a class of probability laws and an admissible cost. The
third set is labelled $(CFG)$ and mixes the requirements on $(F,G,c)$ making
them compatible. \\
Conditions $(C)$ encompass a large class of good Wasserstein 
costs $c$, but $W_1$  is not included - see remark \ref{W1} below. Conditions $(FG)$ are satisfied by all classical laws of
probability. It is important to point out  that conditions $(FG)$ and $(CFG)$ are free from the joint law $\Pi$ of the two samples.
Given a cost $c$ satisfying conditions $(C)$, conditions $(FG)$ and $(CFG)$
provide sufficient regularity and tail conditions on $F$ then regularity, tail
and closeness conditions on $G$. The nice feature is that $(CFG)$ are almost
minimal to ensure that the limiting variance $\sigma^2$ satisfies $\sigma^{2}(\Pi,c)<+\infty$ whatever the joint law,
hence for our CLT.\smallskip

\noindent\textbf{Method.} The $(F,G,c)$-dependent technique of proof we
propose consists in two major steps. At the first step we combine the
assumptions to show that extreme tail terms and approximations in (\ref{Lambdan}) can
be neglected in probability. Next, large quantiles can be centered on a larger scale and their
deviation is led by the two marginal empirical quantile processes. All the
assumptions $(C)$, $(FG)$ and $(CFG)$ are required to control the outer
integral error processes at the $\sqrt{n}$ rate. At the second step, since
only the most central part of integrals eventually matters in (\ref{Lambdan})
it remains to prove its weak convergence to a Gaussian law. At this stage the
pertinent tool is a Brownian approximation of joint non extreme quantiles. The
joint distribution naturally shows up together with the CLT rate $\sqrt{n}$.

\begin{remark}\label{W1}

\noindent The distance $W_1$ does not satisfy assumption $(C3)$ since the derivative of the absolute value does not vanish at $0$. This is a meaningful border case since the limiting law may now depend on the set $\{F=G\}$. 
\end{remark}

\section{Notation and assumption}\label{sec:hyp}

\subsection{Notation}
Let $H$ denote the bivariate distribution function of $\Pi$, thus%
\[
H(x,y)=\mathbb{P}(X\leqslant x,Y\leqslant y),\quad F(x)=H(x,+\infty),\quad
G(y)=H(+\infty,y).
\]
For the sake of clarity, we focus on the generic case where the c.d.f 
$F$ and $G$ have positive densities  $f=F^{\prime}$ and $g=G^{\prime}$ supported on the whole line $\R$.  Write $F^{-1}$ and
$G^{-1}$ their quantile functions. The tail
exponential order of decay are defined to be%
\begin{equation}
\psi_{X}(x)=-\log\mathbb{P}(X>x),\quad\psi_{Y}(x)=-\log\mathbb{P}(Y>x),\quad
x\in\mathbb{R}_{+},\label{psy}%
\end{equation}
We introduce  the density quantile functions%
\[
h_{X}=f\circ F^{-1},\quad h_{Y}=g\circ G^{-1},
\]
and  their companion functions%
\[
H_{X}(u)=\frac{1-u}{F^{-1}(u)h_{X}(u)},\quad H_{Y}(u)=\frac{1-u}%
{G^{-1}(u)h_{Y}(u)}.
\]
\noindent For $k\in\mathbb{N}_{\ast}$ denote $\mathcal{C}_{k}(I)$ the set of
functions that are $k$ times continuously differentiable on $I\subset\mathbb{R}$, and
$\mathcal{C}_{0}(I)$ the set of continuous functions. Let $\mathcal{M}_{2}\left(  m,+\infty\right)  $ be the subset of functions
$\varphi\in\mathcal{C}_{2}$ such that $\varphi^{\prime\prime}$ is monotone on $\left(  m,+\infty\right)$. 
Write
$RV(\gamma)$ the set of regularly varying functions at $+\infty$ with index $\gamma\geqslant 0$. 
We consider  slowly varying functions $L$ satisfying

\begin{equation}
L^{\prime}(x)=\frac{\varepsilon_{1}(x)L(x)}{x},\quad\varepsilon_{1}%
(x)\rightarrow0\text{ as }x\to +\infty.\label{L'}%
\end{equation}
This slight restriction is explained in the Appendix at Section \ref{regvar}. 
Then for integrability reasons we impose

\begin{equation}
L^{\prime}(x)\geqslant\frac{l_{1}}{x},\quad l_{1}\geqslant1.\label{L1}%
\end{equation}
When $\gamma=0$ we define%
\[
RV_{2}^{+}(0,m)=\left\{  L:L\in\mathcal{M}_{2}\left(  m,+\infty\right)
\text{\ such that (\ref{L'}) and (\ref{L1}) hold}\right\}  .
\]
When $\gamma>0$%
\[
RV_{2}^{+}(\gamma,m)=\left\{  \varphi:\varphi\in\mathcal{M}_{2}\left(  m,+\infty\right), \varphi(x)=x^\gamma L(x)\text{\ such that }L'\text{ obeys (\ref{L'})}\right\}  .
\]
\subsection{Assumption}
\subsubsection{Conditions $(FG)$.}

Let  $m>\max(0,F^{-1}(1/2),G^{-1}(1/2))$ be large enough to satisfies all the subsequent assumptions. Let $\overline{u}%
=\max(F(m),G(m))>1/2$. We assume that there   exists $\tau_0>0$ such that 
\begin{align*}
(FG1)&\quad F,G\in\mathcal{C}_{2}(\mathbb{R}_{+}\mathbb{)}\text{,}\quad
f,g>0\text{ on }\mathbb{R}_{+}.
\\
(FG2)&\quad
\left(  1-u\right)  \left\vert \left(
\log h(u)\right)  ^{\prime}\right\vert\textrm{\ is\ bounded on \ }(\bar u,1),\quad h=h_{X},h_{Y}.
\\
(FG3)&\quad H_{X},H_{Y}\text{ are bounded on } (\bar u,1).\\
(FG4)&\quad\ \tau(u)=F^{-1}(u)-G^{-1}(u)\geqslant\tau_{0},\quad u\geqslant \overline{u}.
\end{align*}
\begin{remark}
Assumption $(FG4)$ means that the right tails of $F$ and $G$ are asymptotically well separated. In particular it allows translation models. \\
Rewritting $(FG2)$ and $(FG3)$ with the density functions we get the following equivalent conditions
\begin{equation*}
(FG5) \sup_{x>m}\frac{1-F(x)}{f(x)}\left(\frac{1}{x}+\frac{|f'(x)|}{f(x)}\right)<\infty\mathrm{\ and\ } \sup_{x>m}\frac{1-G(x)}{g(x)}\left(\frac{1}{x}+\frac{|g'(x)|}{g(x)}\right)<\infty.
\end{equation*}
At Section \ref{SCFG} we provide  a sufficient condition for ($FG1$), ($FG2$), ($FG3$). 
\end{remark}

\begin{example}All classical probability laws with lighter tail
than a Pareto law are $(FG)$ admissible since they are smooth enough. An
example of heavy tail is the Pareto law with parameter $p>0$ for which
\[
\psi_{X}(x)=p\log x,\ F^{-1}(u)=(1-u)^{-1/p},\ H_{X}(u)=\frac{1}{p},
\]%
\[
h_{X}(u)=p(1-u)^{1+1/p},\ \left(  1-u\right)  \left\vert \left(  \log
h_{X}(u)\right)  ^{\prime}\right\vert =\frac{1}{p}.
\]
An example of light tail is the Weibull law with parameter $q>0$ for which
\[
\psi_{X}(x)=x^{q},\ F^{-1}(u)=\left(  \log(1/(1-u))\right)  ^{1/q}%
,\ H_{X}(u)=\frac{1}{q\log(1/(1-u))},
\]%
\[
h_{X}(u)=q(1-u)\left(  \log(1/(1-u))\right)  ^{1-1/q},\ \left(  1-u\right)
\left\vert \left(  \log h_{X}(u)\right)  ^{\prime}\right\vert \sim\frac{1}{q}%
\]
and this law is log-convex if $q<1$, log-concave if $q>1$. If $\psi_{X}$ is
regularly varying with index $q>0$ the previous functions are only modified by
a slowly varying factor, as for the Gaussian law. 
\end{example}

\subsubsection{Conditions $(C)$}
We consider smooth Wasserstein costs satisfying property $\mathbf{\mathcal P}$. 
We impose ($wlog$) that $c(x,x)=0$ and  assume that, for $0<\tau_1<\tau_0$ and some $\gamma\geqslant0$
\begin{align*}
({C1}) &\quad c(x,y)\geqslant0,\quad c\in\mathcal{C}_{1}( [-m,m]\times\mathbb R\cup \mathbb R\times[-m,m] ).\\
({C2})&\ c(x,y):=\rho\left(|x-y|\right)=\exp(l(\left\vert x-y\right\vert )), (x,y)\in\left(  m,+\infty\right)  ^{2}, l\in{RV}_{2}^{+}(\gamma
,\tau_1\mathbb{)}. \end{align*}
Thus $c$ is asymptotically smooth and symetric. Moreover we need the following contraction of $c\left(  x,y\right)$ along the
diagonal $x=y$. We assume that there exists  $d(m,\tau)\to 0$ as $\tau\to 0$ such that
\begin{align*}
({C3})\quad \left\vert c\left(  x^{\prime},y^{\prime}\right)  -c\left(  x,y\right)
\right\vert \leqslant d(m,\tau)\left(  \left\vert x^{\prime}-x\right\vert
+\left\vert y^{\prime}-y\right\vert \right) \mathrm{for } \left(  x,y\right), \left(
x^{\prime},y^{\prime}\right) \in 
D_{m}(\tau), 
\end{align*}
where
$D_{m}(\tau)=\left\{  (x,y):\max(\left\vert x\right\vert ,\left\vert
y\right\vert )\leqslant m,\left\vert x-y\right\vert \leqslant\tau\right\}  
$. 
\begin{remark} Under $(C2)$ we have%
\[
\rho(\left\vert x-y\right\vert )\leqslant\rho(\max(x,y))\leqslant\rho
(x)+\rho(y),\quad(x,y)\in\left(  m,+\infty\right)  ^{2}.
\]
 Hence%
\begin{equation}
\sup_{x>m,y>m}\frac{c(x,y)}{\rho(x)+\rho(y)}\leqslant1.\label{crhosup}%
\end{equation}
\end{remark}

\begin{example}Typical costs satisfying the conditions $(C)$ are,
for $\alpha>1$,
\begin{equation}
c_{\alpha}(x,y)=\left\vert x-y\right\vert ^{\alpha}\label{calpha}
\end{equation}
and, for $\beta>0$,%
\begin{equation}
c_{\beta}^{-}(x,y)=\exp((\log\left(  1+\left\vert x-y\right\vert \right)
)^{1+\beta})-1,\quad c_{\beta}^{+}(x,y)=\exp(\left\vert x-y\right\vert ^{\beta
})-1.\label{cbeta}%
\end{equation}
They satisfy $(C2)$ with $\gamma=0$, $\gamma=0$ and $\gamma=\beta$
respectively.
\end{example}

\subsubsection{Conditions $(CFG)$}
Recall that if $(C2)$ holds the)  $ l\in{RV}_{2}^{+}(\gamma
,\tau_1\mathbb{)}$. Now when $\gamma=0$
in order to compare the tail functions and the cost function we need %
\begin{equation}
\underset{x\rightarrow+\infty}{\lim\sup}\frac{\log(xl^{\prime}(x))}{\log
l(x)}=1-\underset{x\rightarrow+\infty}{\lim\inf}\frac{\log(1/\varepsilon
_{1}(x))}{\log l(x)}=\theta_{1}\in\left[  0,1\right] ,\label{theta1}%
\end{equation}
where $\varepsilon_{1}$ is defined in \eqref{L'}.  In the case $\gamma>0$ we set  $\theta_1=1$. The following crucial assumption $(CFG)$ connects the distribution's tails with the  cost function.

\begin{align*}
(CFG)&\textrm{\ There exists\  } \theta>1+\theta_1\textrm{\ such\ that\ }\left(  \psi_{X}\circ l^{-1}\right)  ^{\prime}(x)\geqslant
2+\frac{2\theta}{x},\ x\geqslant
l(\tau_1).
\end{align*}
\begin{remark}
For Wasserstein distances given by $c_{\alpha}, \alpha>1$, $l(x)=\alpha\log x$. We have
$\gamma=0$ and $\varepsilon_{1}(x)=\alpha/l(x)$ in (\ref{theta1}) so that the
restriction in $(CFG)$ is $\theta>1$.
\end{remark}
\noindent\textbf{A simple sufficient condition.} If we have, for some
$\zeta>2$%
\begin{equation}
\mathbb{P}\left(  X>x\right)  \leqslant\frac{1}{\exp(l(x))^{\zeta}},\quad
x\in\left(  m,+\infty\right)  ,\label{tractable}%
\end{equation}
then $\psi_{X}(x)\geqslant\zeta l(x)$ so that $(CFG)$ holds with arbitrarily
large $\theta$.\\
We use the following consequences of $(CFG)$.
Integrating $(CFG)$ yields 
\begin{equation}\label{eq:lpsi}
\psi_{X}\circ l^{-1}(x)\geqslant2x+2\theta\log x+K,\quad x\geqslant l(\tau_1),
\end{equation}
where the integrating constant $K$ does not matter and may change from line to line.
This also implies%
\begin{equation}
\psi_{X}(x)\geqslant2l(x)+2\theta\log l(x)+K,\quad x\geqslant \tau_1,\label{CFG1ab}%
\end{equation}
and, more importantly for our needs, inverting  \eqref{eq:lpsi} we obtain%
\begin{equation}\label{eq:psil}
l\circ\psi_{X}^{-1}(x)\leqslant\frac{x}{2}-\theta\log x+K, \quad x\geqslant \tau_1.\end{equation}
Now, \eqref{eq:lpsi} gives%
\[
\mathbb{P}\left(  \rho(X)>x\right)  =\mathbb{P}\left(  l(X)>\log x\right)
=\exp\left(  -\psi_{X}\circ l^{-1}(\log x)\right)  \leqslant\frac{K}%
{x^{2}(\log x)^{2\theta}}%
\]
and  since $\theta>1$ we have
\begin{equation}
\int_{m}^{+\infty}\sqrt{\mathbb{P}\left(  \rho(X)>x\right)  }dx<+\infty
.\label{hrhoX}%
\end{equation}
\begin{remark}
This is  the same kind of condition (3.4)  in  \cite{Ledoux17}
that ensures  the convergence of $W_1(\mathbb F_n,F)$ at rate $\sqrt n$. So
it turns out that (\ref{hrhoX}) is almost a minimal assumption in proving Theorem
\ref{mainth}. This is clearly confirmed at Lemmas \ref{variance} and \ref{variancebis}
establishing that the asymptotic variance of $\sqrt n \left(W_{c}(\F_{n},\G_{n})-W_{c}(F,G)\right)$ is finite.
\end{remark}
\begin{example}

For an over-exponential cost $c_{\gamma}^{+}$ from (\ref{cbeta}), $\gamma>1$, $(CFG)$ is satisfied if  $\mathbb{P}\left(  X>x\right)
\leqslant\exp(-2x^{\gamma}-\delta\log x)$ with $\delta>4(1-\gamma)$. For the Wasserstein cost
$c_{\alpha}$ from (\ref{calpha}), $\alpha>1$, consider a Pareto law, $\psi
_{X}(x)=\beta\log x$. Then $(CFG)$ reads $\alpha x/\beta<x/2-\theta\log x$,
and holds if $\beta>2\alpha$.
Gaussian laws are compatible without
restriction with any cost less than $\rho(x)=\exp(ax^{\gamma})$, $\gamma<2$,
$a>0$. In the case $\gamma=2$ the variance of $X$ has to be less than $a/4$
for $(CFG)$ to hold, and $G$ may be any Gaussian law different from $F$ with
smaller variance or same variance but smaller expectation.\smallskip
\end{example}

\section{Statement of the results}\label{sec:main}

\subsection{Consistency }
$W_c(\F_n,\G_n)$ is a consistent estimator of $W_c(F,G)$:
\begin{theorem}\label{theo:Consistance}
Assume that the good cost $c(x,y)$ is continuous, $F,G$ are strictly increasing and
$0\leqslant c(x,y)\leqslant V(x)+V(y)$ with $V$ a strictly increasing function such
that $\mathbb{E} V(X) <+\infty$ and $\mathbb{E}
V(Y)  <+\infty$. Then 
\[
\lim_{n\rightarrow\infty}W_c(\F_n,\G_n) =W_c(F,G)<+\infty \quad a.s.
\]

\end{theorem}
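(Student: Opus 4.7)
The plan is to reduce everything to integrals of $c$ composed with quantile functions and then conclude by a generalized dominated convergence argument with a non-constant dominating sequence. Invoking Theorem~\ref{theo:cam} for both $W_c(F,G)$ and $W_c(\F_n,\G_n)$ --- which is legitimate since $c$ is a good cost and both $(F,G)$ and $(\F_n,\G_n)$ are fixed pairs of c.d.f.'s --- one rewrites
\[
W_c(\F_n,\G_n)=\int_0^1 c(\F_n^{-1}(u),\G_n^{-1}(u))\,du,\qquad W_c(F,G)=\int_0^1 c(F^{-1}(u),G^{-1}(u))\,du,
\]
so the task reduces to an a.s. convergence of integrals over $(0,1)$.

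For the pointwise convergence of the integrand, I would use Glivenko--Cantelli on each marginal: $\F_n\to F$ and $\G_n\to G$ uniformly on $\mathbb R$ a.s. Because $F$ and $G$ are continuous and strictly increasing, their quantile functions are continuous on $(0,1)$, and uniform convergence of the c.d.f.'s upgrades to $\F_n^{-1}(u)\to F^{-1}(u)$ and $\G_n^{-1}(u)\to G^{-1}(u)$ for every $u\in(0,1)$ a.s. Continuity of $c$ then yields a.s. pointwise convergence of the integrand at every $u\in(0,1)$. For domination, the assumed bound $0\leqslant c(x,y)\leqslant V(x)+V(y)$ gives
\[
0\leqslant c(\F_n^{-1}(u),\G_n^{-1}(u)) \leqslant V(\F_n^{-1}(u))+V(\G_n^{-1}(u)),
\]
and the crucial observation is that
\[
\int_0^1 V(\F_n^{-1}(u))\,du=\frac{1}{n}\sum_{i=1}^n V(X_{(i)})=\frac{1}{n}\sum_{i=1}^n V(X_i)
\]
converges a.s. to $\mathbb EV(X)=\int_0^1 V(F^{-1}(u))\,du<\infty$ by the strong law of large numbers, and similarly for $V\circ G^{-1}$. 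Since $V$ is monotone it is continuous outside a countable set, which by strict monotonicity of $F$ pulls back to a Lebesgue null set in $(0,1)$; thus the dominating sequence also converges pointwise a.e. to $V\circ F^{-1}+V\circ G^{-1}$.

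The proof then closes by applying Pratt's generalized dominated convergence theorem: if $0\leqslant f_n\leqslant g_n$, $f_n\to f$ and $g_n\to g$ a.e., and $\int g_n\to\int g<\infty$, then $\int f_n\to\int f$. All hypotheses have been verified, so $W_c(\F_n,\G_n)\to W_c(F,G)$ a.s., and finiteness of the limit follows from $W_c(F,G)\leqslant \mathbb EV(X)+\mathbb EV(Y)<\infty$. The only mildly delicate step is the a.e. handling of the dominating sequence, which rests on the monotonicity of $V$ (continuous outside a countable set) and on the continuity of $F^{-1}$, $G^{-1}$; beyond that the argument is essentially classical Glivenko--Cantelli plus SLLN, so no serious obstacle is anticipated.
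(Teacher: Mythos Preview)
Your proof is correct and takes a genuinely different, more economical route than the paper. The paper argues by splitting $(0,1)$ into a compact interior and two tails: on the interior it uses a Dini/P\'olya-type argument to upgrade pointwise convergence of $\F_n^{-1},\G_n^{-1}$ to uniform convergence and then continuity of $c$; on the tails it performs an explicit truncation $\tilde Z_i=1_{\{V(X_i)\geqslant F_Z^{-1}(\beta^-)\}}V(X_i)$ and invokes the SLLN to show the tail contributions can be made arbitrarily small, then lets the truncation level $\beta_k\to1$. You bypass this entire splitting by recognising that the SLLN gives not just smallness of tails but convergence of the full $L^1$ norm of the dominating sequence $g_n=V\circ\F_n^{-1}+V\circ\G_n^{-1}$, which together with its a.e.\ pointwise convergence is exactly the input to Pratt's generalized dominated convergence theorem. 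The underlying ingredients are the same --- Glivenko--Cantelli for pointwise convergence of the quantile functions, and the SLLN applied to $V(X_i)$ --- but your packaging via Pratt avoids the bookkeeping of the truncation levels and the separate interior/tail treatment. The only mildly delicate point you flag (a.e.\ convergence of $g_n$ when $V$ may have jumps) is handled correctly by the monotonicity of $V$ and the strict monotonicity of $F,G$, which makes the exceptional set countable in $(0,1)$; this step is absent from the paper's argument because the explicit truncation works directly with $V(X_i)$ rather than $V\circ\F_n^{-1}$.
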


\subsection{A central limit theorem}

\begin{definition}
\label{defhyp}We say that conditions $(C)$, $(FG)$ and $(CFG)$ hold if they
hold for $(c(x,y),X,Y)$ as stated above and also for $(c(-x,-y),-X,-Y)$ with
possibly different functions $\rho$, $l$, $\psi$ and $F$ again
denoting the heavier tail.
\end{definition}

\noindent This means that the left hand tail of $F$ and $G$ should be reversed
from $\mathbb{R}_{-}$ to $\mathbb{R}_{+}$ and obey our set of conditions and,
if $G$ has the heavier tail the couples $(F,X)$ and $(G,Y)$ are simply
exchanged in $(FG)$ and $(CFG)$. \\

\noindent Define%
\[
\Pi(u,v)=\mathbb{P}\left(  X\leqslant F^{-1}(u),Y\leqslant G^{-1}(v)\right)  
\]
then the covariance matrix%
\begin{equation}
\Sigma(u,v)=\left(
\begin{array}
[c]{cc}%
\frac{\min(u,v)-uv}{h_{X}(u)h_{X}(v)} & \frac{\Pi(u,v)-uv}{h_{X}(u)h_{Y}(v)}\\
\frac{\Pi(v,u)-uv}{h_{X}(v)h_{Y}(u)} & \frac{\min(u,v)-uv}{h_{Y}(v)h_{Y}(u)}%
\end{array}
\right)  \label{sigmauv}%
\end{equation}
and the gradient%
\begin{equation}
\nabla(u)=\left(  \frac{\partial}{\partial x}c(F^{-1}(u),G^{-1}(u)),\frac
{\partial}{\partial y}c(F^{-1}(u),G^{-1}(u))\right)  .\label{deltau}%
\end{equation}
Our main result is the weak convergence of the empirical  Wasserstein distance
of (\ref{Lambdan}) toward an explicit Gaussian law $\cal N$.

\begin{theorem}
\label{mainth}If $(C)$, $(FG)$ and $(CFG)$ hold then%
\[
\sqrt{n}\left(  W_{c}(\F_{n},\mathbb G_{n})-W_{c}(F,G)\right)
\rightarrow_{law}\mathcal{N}(0,\sigma^{2}(\Pi,c))
\]
with%
\begin{equation}
\sigma^{2}(\Pi,c)=%
{\displaystyle\int\nolimits_{0}^{1}}
{\displaystyle\int\nolimits_{0}^{1}}
\nabla(u)\Sigma(u,v)\nabla(v)dudv<+\infty.\label{delta2}%
\end{equation}
Moreover for any real sequence $\varepsilon_{n}\rightarrow0$ then%
\[
W_{c,n}(\mathbb F_{n},\mathbb G_{n})=\int_{\varepsilon_{n}}^{1-\varepsilon_{n}}c(\mathbb F_{n}%
^{-1}(u),\mathbb G_{n}^{-1}(u))du,
\]
also satisfies%
\[
\sqrt{n}\left(  W_{c,n}(\mathbb F_{n},\mathbb G_{n})-W_{c}(F,G)\right)  \rightarrow
_{law}\mathcal{N}(0,\sigma^{2}(\Pi,c)).
\]
\end{theorem}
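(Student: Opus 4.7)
The plan is to follow the two-step strategy sketched by the authors in the introduction. Write
\begin{equation*}
\sqrt{n}\bigl(W_{c}(\mathbb F_{n},\mathbb G_{n})-W_{c}(F,G)\bigr)=\sqrt{n}\int_{0}^{1}\bigl[c(\mathbb F_{n}^{-1}(u),\mathbb G_{n}^{-1}(u))-c(F^{-1}(u),G^{-1}(u))\bigr]\,du,
\end{equation*}
choose a sequence $\varepsilon_{n}\downarrow0$ slowly enough (e.g.\ $\varepsilon_{n}=n^{-\alpha}$ for a small $\alpha>0$ tuned to the moment conditions in $(CFG)$), and split the integral into the \emph{outer} part over $(0,\varepsilon_{n})\cup(1-\varepsilon_{n},1)$ and the \emph{inner} part over $[\varepsilon_{n},1-\varepsilon_{n}]$. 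The goal of step one is to show that the outer contribution is $o_{P}(1)$; the goal of step two is to prove the CLT for the inner contribution and identify its limit.

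For the tail control, by symmetry I would focus on the right tail $(1-\varepsilon_{n},1)$ (the left tail is handled by Definition \ref{defhyp}). Using $(C2)$ and the subadditivity bound (\ref{crhosup}), the integrand is dominated by $\rho(\mathbb F_{n}^{-1}(u))+\rho(\mathbb G_{n}^{-1}(u))+\rho(F^{-1}(u))+\rho(G^{-1}(u))$, so via Cauchy--Schwarz and a change of variable the contribution is controlled by $\int_{m}^{\infty}\sqrt{\mathbb P(\rho(X)>x)}\,dx$ and its $Y$-analogue, which are finite by (\ref{hrhoX}). To upgrade this to $o_{P}(1/\sqrt{n})$ after multiplication by $\sqrt{n}$ I would use $(FG2)$--$(FG3)$ to turn the deviation $\rho(\mathbb F_{n}^{-1}(u))-\rho(F^{-1}(u))$ into a controlled multiple of $F^{-1}(u)h_{X}(u)^{-1}$ times the normalized quantile deviation, then apply a maximal inequality on order statistics using (\ref{eq:lpsi})--(\ref{eq:psil}). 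Assumption $(FG4)$ ensures we remain away from the diagonal in the tail so that $(C3)$ is not needed there and the bound by $\rho(x)+\rho(y)$ is not wasteful.

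For the inner part, since $\varepsilon_{n}\leqslant u\leqslant 1-\varepsilon_{n}$ the quantiles $F^{-1}(u),G^{-1}(u)$ stay in a bounded region where $c\in\mathcal{C}_{1}$ by $(C1)$, and the quantile increments are uniformly $o_{P}(1)$. A two-variable Taylor expansion gives
\begin{equation*}
c(\mathbb F_{n}^{-1}(u),\mathbb G_{n}^{-1}(u))-c(F^{-1}(u),G^{-1}(u))=\nabla(u)\cdot\bigl(\mathbb F_{n}^{-1}(u)-F^{-1}(u),\,\mathbb G_{n}^{-1}(u)-G^{-1}(u)\bigr)^{\top}+R_{n}(u),
\end{equation*}
and the quadratic remainder is negligible once integrated, thanks to the growth bounds on $\nabla$ coming from $(FG2)$--$(FG3)$ combined with the modulus of continuity of the quantile process. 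For the linear term, I would invoke a strong (Komlós--Major--Tusnády type) approximation of the bivariate empirical quantile process, i.e.\ couple
\begin{equation*}
\sqrt{n}\bigl(\mathbb F_{n}^{-1}(u)-F^{-1}(u),\,\mathbb G_{n}^{-1}(u)-G^{-1}(u)\bigr)\approx \Bigl(-\frac{B_{n}^{X}(u)}{h_{X}(u)},-\frac{B_{n}^{Y}(u)}{h_{Y}(u)}\Bigr),
\end{equation*}
with $(B_{n}^{X},B_{n}^{Y})$ a bivariate Brownian bridge whose cross-covariance is $\Pi(u,v)-uv$. Integrating $\nabla(u)$ against this Gaussian process over $[\varepsilon_{n},1-\varepsilon_{n}]$ yields a centered Gaussian random variable whose variance, computed explicitly via Fubini, is precisely the announced $\sigma^{2}(\Pi,c)$ of (\ref{delta2}). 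Finiteness of this variance is a separate integrability check using again $(FG)$, $(C)$ and $(CFG)$ through $\rho$ and $h_{X},h_{Y}$.

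The hardest part is unquestionably the uniform tail control at the $\sqrt{n}$ rate: empirical order statistics near the boundaries are very poorly approximated by the deterministic quantile, and the nonlinearity of $c$ can amplify these errors. The whole triplet of assumptions $(FG2)$--$(FG3)$--$(CFG)$ is tailored exactly to close this gap by balancing how fast $c$ grows against how fast $1-F$ and $1-G$ decay. Finally, the statement for $W_{c,n}(\mathbb F_{n},\mathbb G_{n})$ follows as an immediate corollary: by construction $W_{c,n}-W_{c}(\mathbb F_{n},\mathbb G_{n})$ is exactly the outer-integral term that was already shown to be $o_{P}(1/\sqrt{n})$ in step one, so both estimators share the same Gaussian limit.
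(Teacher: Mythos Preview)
Your overall strategy is the paper's strategy, but your single inner/outer split at $\varepsilon_{n}$ hides the real difficulty and leads to a false statement. You write that on $[\varepsilon_{n},1-\varepsilon_{n}]$ ``the quantiles $F^{-1}(u),G^{-1}(u)$ stay in a bounded region where $c\in\mathcal{C}_{1}$''. With $\varepsilon_{n}\to 0$ this is simply wrong: $F^{-1}(1-\varepsilon_{n})\to+\infty$, so your ``inner'' region is unbounded and neither the $\mathcal{C}_{1}$ bound on $\nabla c$ nor a uniform quadratic Taylor remainder is available there. This is not a technicality you can patch by growth bounds on $\nabla$ alone: the second-order term involves $\rho''$, which blows up, and the Brownian/KMT approximation of the quantile process itself only holds on $[k_{n}/n,1-k_{n}/n]$ with $k_{n}\to\infty$ at a rate dictated by the approximation theorem, not on an arbitrary $[\varepsilon_{n},1-\varepsilon_{n}]$.

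This is precisely why the paper uses a \emph{four}-level decomposition of $(1/2,1)$: a fixed compact window $(1/2,F(M))$ where your Taylor argument is valid and produces the Gaussian limit; a growing middle zone $(F(M),1-h_{n}/n)$ with $h_{n}=n^{\beta}$, $\beta\in(1-\xi,1)$, where the Brownian approximation still applies but $\nabla$ is unbounded, so one must show the resulting Gaussian integral has variance $\to 0$ as $M\to\infty$; a further zone $(1-h_{n}/n,1-k_{n}/n)$ where the Brownian coupling fails and one instead uses the Cs\"org\H{o}--R\'ev\'esz bound on $\beta_{n}^{X}$ together with a delicate first-order expansion of $\rho$ (Proposition~\ref{C3rho}) valid uniformly on growing arguments; and finally the extreme zone $(1-k_{n}/n,1)$ where centering is useless and the deterministic integral $D_{n}$ and the random sum $S_{n}$ are bounded separately via the carefully tuned $k_{n}$ of (\ref{kn}). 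Your outer-part sketch (dominate by $\rho(\mathbb F_{n}^{-1})+\rho(\mathbb G_{n}^{-1})$ and invoke (\ref{hrhoX})) corresponds only to this last zone and does not by itself reach $o_{P}(1/\sqrt{n})$ without the specific choice of $k_{n}$; and your inner-part sketch conflates three zones that require genuinely different tools.
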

The result for the trimmed version $W_{c,n}$ easilly follows from the proof for $W_{c}$. Likewise slight changes in the proof of Theorem \ref{mainth} yields
\begin{theorem}
 If $(C)$, $(FG)$ and $(CFG)$ hold then%
\begin{align*}
&\sqrt{n}\left(  W_{c}(\F_{n}, G)-W_{c}(F,G)\right)
\rightarrow_{law}\mathcal{N}(0,\sigma_x^{2}(F,c)),\\
&\sqrt{n}\left(  W_{c}(F, \G_{n})-W_{c}(F,G)\right)
\rightarrow_{law}\mathcal{N}(0,\sigma_y^{2}(G,c)),
\end{align*}
with%
\begin{align*}
&\sigma_x^{2}(F,c)=%
{\displaystyle\int\nolimits_{0}^{1}}
{\displaystyle\int\nolimits_{0}^{1}}
\frac{\partial}{\partial x}c(F^{-1}(u),G^{-1}(u))\frac{\min(u,v)-uv}{h_{X}(u)h_{X}(v)}dudv<+\infty,\\
&\sigma^{2}_y(G,c)=%
{\displaystyle\int\nolimits_{0}^{1}}
{\displaystyle\int\nolimits_{0}^{1}}
\frac{\partial}{\partial y}c(F^{-1}(u),G^{-1}(u))\frac{\min(u,v)-uv}{h_{Y}(u)h_{Y}(v)}dudv<+\infty.
\end{align*}

\end{theorem}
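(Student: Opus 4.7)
The plan is to specialize the proof of Theorem \ref{mainth} to the case where only one of the two quantile functions is random; by symmetry I will only treat the first assertion, the second being identical after exchanging the roles of $(F,X)$ and $(G,Y)$ and the two partial derivatives of $c$. Set
$$
\Delta_n(u) = c(\F_n^{-1}(u),G^{-1}(u)) - c(F^{-1}(u),G^{-1}(u)),
$$
so that $W_c(\F_n,G) - W_c(F,G) = \int_0^1 \Delta_n(u)\,du$.

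First I would truncate to a central region $[\varepsilon_n,1-\varepsilon_n]$ with $\varepsilon_n\to 0$ slowly enough, and show that the tail contribution is $o_{\P}(1/\sqrt n)$ by repeating the extreme-tail estimates used in Theorem \ref{mainth}. Those estimates rely only on $(FG)$ for $X$, on the growth condition $(C2)$, and on the integrability \eqref{hrhoX} derived from $(CFG)$. In the present one-sided setting $G^{-1}(u)$ is deterministic, so one only has to control $\rho(\F_n^{-1}(u))$ and $\rho(G^{-1}(u))$ separately rather than jointly, and the tail arguments become strictly easier than in Theorem \ref{mainth}.

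On the central region I would Taylor expand $c$ in its first argument,
$$
\Delta_n(u) = \frac{\partial c}{\partial x}(F^{-1}(u),G^{-1}(u))\,(\F_n^{-1}(u)-F^{-1}(u)) + R_n(u),
$$
and verify that $\sqrt n \int_{\varepsilon_n}^{1-\varepsilon_n} R_n(u)\,du = o_{\P}(1)$, using $(C1)$, the smoothness of $c$, and the uniform control of $\F_n^{-1}(u)-F^{-1}(u)$ over $[\varepsilon_n,1-\varepsilon_n]$ furnished by $(FG2)$-$(FG3)$. It then remains to handle the linear part, for which I would invoke, as in the proof of Theorem \ref{mainth}, the KMT strong approximation replacing $\sqrt n(\F_n^{-1}(u)-F^{-1}(u))$ by $-B_n(u)/h_X(u)$ for a standard Brownian bridge $B_n$. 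After substitution one obtains
$$
\sqrt n\bigl(W_c(\F_n,G)-W_c(F,G)\bigr) = -\int_0^1 \frac{\partial c}{\partial x}(F^{-1}(u),G^{-1}(u))\frac{B_n(u)}{h_X(u)}\,du + o_{\P}(1),
$$
a centered Gaussian whose variance, computed with $\E B_n(u)B_n(v) = \min(u,v)-uv$, is precisely $\sigma_x^2(F,c)$.

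The main obstacle is again the tail truncation combined with the control of the integrated Bahadur remainder, exactly as in Theorem \ref{mainth}; but here the joint tail assumptions are replaced by marginal ones, so the bounds of Section \ref{sec:proof} apply in a strictly easier way. Finiteness of $\sigma_x^2(F,c)$ follows from $(CFG)$ as in Lemmas \ref{variance} and \ref{variancebis}: the kernel $\min(u,v)-uv$ together with the boundary control of $\partial c/\partial x$ implied by $(C2)$ and $(FG)$ yields square-integrability of the integrand.
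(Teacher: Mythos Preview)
Your proposal is correct and matches the paper's own approach: the paper does not give a separate proof but simply states that ``slight changes in the proof of Theorem \ref{mainth}'' yield the result, and your outline is precisely that specialization---drop the second empirical quantile process, Taylor expand $c$ in the first variable only, and replace the joint Brownian approximation of Theorem \ref{approx} by the single Brownian bridge for $\beta_n^X$. The sign in front of $B_n(u)/h_X(u)$ should match the paper's convention (no minus, since $\beta_n^X = (B_n^X + Z_n^X)/h_X$ there), but this is cosmetic and does not affect the variance.
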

In the particular case of the square Wasserstein distance and two independent samples we have
\begin{corollary} Assume that the two samples are independent, $(FG)$ holds and $\displaystyle\mathbb P(X>x)\leqslant \frac{1}{x^{4+\varepsilon}}$ with $\varepsilon>0$. Then  
$$\sqrt n (\frac{1}{n}\sum_{i=1}^n (X_{(i)}-Y_{(i)})^2- W_2^2(F,G))$$ weakly converges toward a centered Gaussian random variable  with variance
\begin{eqnarray*}
 \begin{split} &\sigma^2(F,G)=4 \int_0^1\int_0^1  \left(\frac{u\wedge v-uv}{f(F^{-1}(u)) f(F^{-1}(v))}+\frac{u\wedge v-uv}{g(G^{-1}(u)) g(G^{-1}(v))}\right)\\ &\times (F^{-1}(u)-G^{-1}(u)) (F^{-1}(v)-G^{-1}(v))dudv. \end{split} 
\end{eqnarray*}

\end{corollary}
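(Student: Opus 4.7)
The strategy is to view the corollary as a direct specialization of Theorem \ref{mainth} to the Wasserstein cost $c_2(x,y)=(x-y)^2$ under independent coupling; the only task is to verify the hypotheses for this $c_2$ and to reduce \eqref{delta2} using $\Pi(u,v)=uv$.

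\textbf{Verification of the hypotheses.} Taking $\rho(t)=t^2$ and $l(t)=2\log t$, we have $l\in\mathcal{C}_2$ with $l'(t)=2/t$ and $l''(t)=-2/t^2$ monotone, so \eqref{L'} and \eqref{L1} hold and $l\in RV_2^+(0,\tau_1)$ for any $\tau_1>0$. Condition $(C1)$ is clear, and $(C3)$ follows from
\[
|(x'-y')^2-(x-y)^2|=|(x'-y')+(x-y)|\cdot|(x'-x)-(y'-y)|\leq 2\tau(|x'-x|+|y'-y|)
\]
on $D_m(\tau)$, giving $d(m,\tau)=2\tau\to 0$. Condition $(FG)$ is part of the hypothesis. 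For $(CFG)$, the bound $\mathbb{P}(X>x)\leq x^{-(4+\varepsilon)}=\exp(-\tfrac{4+\varepsilon}{2}\,l(x))$ matches the simple sufficient condition \eqref{tractable} with $\zeta=(4+\varepsilon)/2>2$, so $(CFG)$ holds with arbitrarily large $\theta$, in particular $\theta>1=1+\theta_1$ (here $\gamma=0$ and $\theta_1=0$, as computed in the example following $(CFG)$ for $c_\alpha$). By symmetry of $c_2$ under $(x,y)\mapsto(-x,-y)$ and $(FG4)$, the reverse-tail part of Definition \ref{defhyp} reduces to the same computation.

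\textbf{Variance computation.} Theorem \ref{mainth} now yields the CLT with variance $\sigma^2(\Pi,c_2)$ from \eqref{delta2}. The gradient of $c_2$ is
\[
\nabla(u)=\bigl(2(F^{-1}(u)-G^{-1}(u)),\ -2(F^{-1}(u)-G^{-1}(u))\bigr).
\]
Independence gives $\Pi(u,v)=uv$, so the two off-diagonal entries of $\Sigma(u,v)$ in \eqref{sigmauv} vanish and
\[
\nabla(u)\Sigma(u,v)\nabla(v)=4(F^{-1}(u)-G^{-1}(u))(F^{-1}(v)-G^{-1}(v))\left[\frac{u\wedge v-uv}{h_X(u)h_X(v)}+\frac{u\wedge v-uv}{h_Y(u)h_Y(v)}\right].
\]
Substituting $h_X(u)=f(F^{-1}(u))$, $h_Y(u)=g(G^{-1}(u))$ and integrating over $[0,1]^2$ recovers the stated $\sigma^2(F,G)$. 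The only non-routine point is checking $(CFG)$: the tail exponent $4+\varepsilon$ is essentially tight, since $\zeta=(4+\varepsilon)/2>2$ is the exact threshold; any slower polynomial decay of $\mathbb{P}(X>x)$ would fail.
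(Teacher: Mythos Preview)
Your proposal is correct and follows exactly the intended route: the paper states this corollary without proof, treating it as an immediate specialization of Theorem \ref{mainth}, and your argument supplies precisely the two missing steps---checking that $c_2$ with the given tail bound falls under $(C)$, $(FG)$, $(CFG)$ via the tractable condition \eqref{tractable}, and collapsing \eqref{delta2} under $\Pi(u,v)=uv$. The only point worth tightening is the left-tail clause of Definition \ref{defhyp}: you should note that the hypothesis $\mathbb{P}(X>x)\leqslant x^{-(4+\varepsilon)}$ is implicitly meant for $|X|$ (or for both tails of the heavier-tailed marginal), since the corollary statement itself is informal on this point.
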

For numerical application the following result could be useful.
\begin{corollary} Consider a family of c.d.f. $F_{a,b}(x)=F(\frac{x-b}{a})$, $a>~0, b\in~\mathbb R$. Assume that $F$ is symetric with variance 1, and denote $V_4=var(X^2)$ where $X$ has c.d.f. $F$. Then  it comes
\begin{eqnarray*}
\sigma^2(F_{a,b},F_{a',b'}))=4(a^2+a'^2)\left((b-b')^2+\frac{V_4}{4}(a-a')^2\right).
\end{eqnarray*}
\end{corollary}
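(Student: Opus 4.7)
The plan is to start from the formula for $\sigma^{2}(F,G)$ given in the previous corollary (the independent-samples, quadratic-cost case) and specialize it to $F=F_{a,b}$, $G=F_{a',b'}$ using the location-scale structure. Writing $Q=F^{-1}$, the quantile and density-quantile transform as
\[
F_{a,b}^{-1}(u)=aQ(u)+b,\qquad f_{a,b}\bigl(F_{a,b}^{-1}(u)\bigr)=\frac{1}{a}f(Q(u)),
\]
so that the two ``variance kernels'' in the corollary become $a^{2}(u\wedge v-uv)/[f(Q(u))f(Q(v))]$ and $a'^{2}(u\wedge v-uv)/[f(Q(u))f(Q(v))]$, while the two ``gradient factors'' become
\[
F_{a,b}^{-1}(u)-F_{a',b'}^{-1}(u)=(a-a')Q(u)+(b-b').
\]
Substituting and factoring out $(a^{2}+a'^{2})$ reduces $\sigma^{2}(F_{a,b},F_{a',b'})$ to $4(a^{2}+a'^{2})$ times
\[
(a-a')^{2}I_{1}+2(a-a')(b-b')\tilde I_{2}+(b-b')^{2}I_{3},
\]
with the obvious definitions
\[
I_{1}=\iint\frac{(u\wedge v-uv)Q(u)Q(v)}{f(Q(u))f(Q(v))}\,du\,dv,\quad
\tilde I_{2}=\iint\frac{(u\wedge v-uv)Q(u)}{f(Q(u))f(Q(v))}\,du\,dv,\quad
I_{3}=\iint\frac{u\wedge v-uv}{f(Q(u))f(Q(v))}\,du\,dv.
\]

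The next step is to identify $I_{1},\tilde I_{2},I_{3}$ with moments of $X\sim F$. The cleanest way is the Brownian-bridge representation: with $B$ a standard Brownian bridge and $Z(u)=-B(u)/f(Q(u))$, the covariance $\mathbb{E}[Z(u)Z(v)]$ equals $(u\wedge v-uv)/[f(Q(u))f(Q(v))]$, and the classical quantile-process expansion gives $\int_{0}^{1}\varphi(Q(u))Z(u)\,du$ as the asymptotic limit of $\sqrt{n}(\frac{1}{n}\sum\Phi(X_{i})-\mathbb{E}\Phi(X))$ where $\Phi'=\varphi$. Applied to $\varphi\equiv 1$ and $\varphi(x)=2x$ this yields
\[
I_{3}=\mathrm{Var}(X)=1,\qquad I_{1}=\tfrac{1}{4}\mathrm{Var}(X^{2})=V_{4}/4,\qquad 2\tilde I_{2}=\mathrm{Cov}(X,X^{2}).
\]
(One can also verify these identities by direct computation using the Fubini manipulation $\iint(u\wedge v-uv)\alpha(u)\beta(v)du\,dv$ after integration by parts, without invoking the Brownian bridge.)

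The final step uses the hypothesis that $F$ is symmetric: then $\mathbb{E}X=\mathbb{E}X^{3}=0$, so $\mathrm{Cov}(X,X^{2})=0$ and the cross term $\tilde I_{2}$ vanishes. Plugging $I_{1}=V_{4}/4$, $\tilde I_{2}=0$, $I_{3}=1$ into the factored expression gives exactly
\[
\sigma^{2}(F_{a,b},F_{a',b'})=4(a^{2}+a'^{2})\left((b-b')^{2}+\frac{V_{4}}{4}(a-a')^{2}\right).
\]
The only non-routine step is the identification of $I_{1},\tilde I_{2},I_{3}$ with variance/covariance of $X$ and $X^{2}$; once that is in place, symmetry of $F$ kills the cross term and the claim drops out of an algebraic expansion. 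The tail condition on $F$ needed to apply the previous corollary translates into a tail condition on each $F_{a,b}$, which is automatic since the family is location-scale.
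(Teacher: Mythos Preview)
Your proof is correct. The paper states this corollary without proof, so there is nothing to compare against; your argument is exactly the natural one: specialize the variance formula of the preceding corollary to the location--scale family, factor out $(a^{2}+a'^{2})$, expand the product $[(a-a')Q(u)+(b-b')][(a-a')Q(v)+(b-b')]$, and identify the three resulting double integrals with $\mathrm{Var}(X^{2})/4$, $\mathrm{Cov}(X,X^{2})/2$ and $\mathrm{Var}(X)$ via the Hoeffding-type identity
\[
\int\!\!\int \varphi(Q(u))\psi(Q(v))\,\frac{u\wedge v-uv}{f(Q(u))f(Q(v))}\,du\,dv=\mathrm{Cov}\bigl(\Phi(X),\Psi(X)\bigr),\qquad \Phi'=\varphi,\ \Psi'=\psi,
\]
after which symmetry of $F$ kills the cross term. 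One small remark: your closing sentence about the tail condition being ``automatic'' is slightly imprecise. What is automatic is that if the base $F$ satisfies the $(4+\varepsilon)$-tail bound of the preceding corollary then every $F_{a,b}$ does; but the present corollary, as stated, does not impose that bound on $F$, so strictly speaking the formula for $\sigma^{2}$ is derived here as an algebraic identity under the implicit assumption that the hypotheses of the preceding corollary are met.
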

As a consequence for two distinct Gaussian laws ${\cal N}(\nu,\zeta^2)$ and ${\cal N}(\mu,\xi^2)$ we obtain
$\sigma^2=4(\zeta^2+ \xi^2)(\nu-\mu)^2+2(\zeta^2+ \xi^2)(\zeta- \xi)^2 $as in Theorem 2.2 in \cite{rippl}.

\bigskip

We now go back  to our main result. It is easy to extend Theorem \ref{mainth}   to probability distributions supported by intervals.

\begin{theorem} Let $F$ and $G$ be supported by intervals. Assume that $(FG)$, $(C)$ and $(CFG)$ hold. If  the most  lightly tailed law  is compactly supported  $(FG4)$ is discarded. Then the conclusion of Theorem \ref{mainth} holds true.
\end{theorem}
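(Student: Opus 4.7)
The plan is to reduce this extension to the result of Theorem \ref{mainth} proved for supports equal to $\mathbb R$, by a tail-by-tail analysis. Apart from $(FG4)$, the conditions $(FG)$, $(C)$ and $(CFG)$ describe only the behavior of $F$, $G$ and $c$ on the tails where the support is unbounded, so they place no new constraint on the bounded side of an interval support. The strategy is therefore to split the integral defining $W_c(\mathbb F_n,\mathbb G_n)-W_c(F,G)$ into a central piece and two tail pieces at thresholds $1-\overline u$ and $\overline u$ (and symmetrically on the left), then treat each tail piece independently according to whether the support is bounded or unbounded at the corresponding endpoint.

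First I would handle the central piece on $[1-\overline u,\overline u]$ exactly as in the proof of Theorem \ref{mainth}: by $(FG1)$ and $(C1)$ the densities $f,g$ are positive and $c$ is $\mathcal C_1$ on the relevant compact rectangle, and the Brownian approximation of the joint empirical quantile process yields the Gaussian limit with variance given by the restriction of $\int\!\!\int \nabla(u)\Sigma(u,v)\nabla(v)\,du\,dv$ to that interval. None of the arguments used there depends on the global shape of the support.

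For each of the two tail pieces I would distinguish two cases. When the support is unbounded on that side, the original proof of Theorem \ref{mainth} applies verbatim using $(FG)$, $(C)$, $(CFG)$ and the symmetric set of conditions from Definition \ref{defhyp}. When instead the support is bounded on that side, say both $F$ and $G$ are supported on $(-\infty,b]$ with $b<\infty$, then $F^{-1},G^{-1},\mathbb F_n^{-1},\mathbb G_n^{-1}$ are all bounded by $b$ near $u=1$ and $c$ is uniformly continuous on $[-m,b]^2$ by $(C1)$; the tail piece then reduces to the weak convergence of the joint empirical quantile process on a bounded subinterval of $(0,1)$, and no tail condition $(CFG)$ or $(FG4)$ is needed on that side. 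Crucially, if only the lighter tailed law, say $G$, is compactly supported on the right with $G^{-1}(1)=b_G<\infty$ while $F$ has unbounded right tail, then $F^{-1}(u)\to+\infty$ as $u\to 1$ and hence $F^{-1}(u)-G^{-1}(u)\geqslant F^{-1}(u)-b_G\geqslant\tau_0$ for $u$ close enough to $1$, so that $(FG4)$ holds automatically and may therefore be discarded from the hypotheses.

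The main obstacle is to verify the finiteness of the limiting variance $\sigma^2(\Pi,c)$ near a bounded endpoint, where the density quantile function $h_X$ or $h_Y$ may vanish and therefore $1/h$ diverges in $\Sigma(u,v)$. The compensation is that on that side the cost derivatives entering $\nabla(u)$ are bounded by $(C1)$, while the integrability conditions in $(FG2)$--$(FG3)$, together with the arguments of Lemmas \ref{variance} and \ref{variancebis}, still ensure $\int\!\!\int \nabla\Sigma\nabla<+\infty$; the bounded case is in fact easier, as the cost factor does not blow up. This gives $\sigma^2(\Pi,c)<+\infty$ and the CLT follows.
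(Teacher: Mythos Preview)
The paper does not give an explicit proof of this extension; it only asserts, just before stating the theorem, that ``it is easy to extend Theorem \ref{mainth}'' to interval-supported laws. Your sketch is therefore essentially what the paper intends: rerun the five-step proof of Theorem \ref{mainth} tail by tail, using the original argument verbatim on any unbounded side and observing that on a bounded side the cost and its gradient stay bounded so Steps 1--3 simplify drastically. Your key remark that $(FG4)$ is automatic when only the lighter-tailed law $G$ is compactly supported on the right --- since then $F^{-1}(u)-G^{-1}(u)\geqslant F^{-1}(u)-b_G\to+\infty$ --- is precisely the reason $(FG4)$ may be discarded. You also correctly identify that the variance contribution on a bounded side is finite: the change of variables $u=F(x)$ turns $\int_{\overline u}^1 \sqrt{1-u}/h_X(u)\,du$ into $\int_m^{b_F}\sqrt{1-F(x)}\,dx<\infty$, with no need for $(CFG)$ there.

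One phrasing should be tightened. On a bounded side the integration domain is still $(\overline u,1)$, not a ``bounded subinterval of $(0,1)$''; what is bounded is the \emph{range} of $F^{-1}$ and $G^{-1}$. You would still carry out the truncations at $1-k_n/n$ and $1-h_n/n$ so that Lemma \ref{hongrois} and Theorem \ref{approx} apply on $\Delta_n$ and $I_{M,n}$; the simplification is that $\rho'\circ F^{-1}$ is replaced by a constant in Steps 2--3, and Step 1 becomes $\sqrt n(D_n+S_n)\leqslant C\,k_n/\sqrt n\to 0$ since $c$ is bounded. When both laws are compactly supported on one side and their endpoints coincide, $\tau(u)$ may vanish as $u\to 1$; this is absorbed by $(C3)$ exactly as in Step 4, which you do not mention but which is implicit in treating that region like the central piece. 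These are the details hidden behind the paper's word ``easy''; your outline is sound.
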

\section{Conclusion}\label{sec:concl}
In this paper we have proved a CLT for the natural estimator of a wide class of probability distributions and Wasserstein costs. Our results concern a couple of samples having the same size but being possibly dependent, provided the marginal distributions are distinct enough. Thus it remains to handle three main problems. First, the case $F=G$ for which the speed of weak convergence could be different from the usual $\sqrt{n}$ and the limiting law could be non Gaussian.. Second the case of $W_1$ with $F\ne G$ and $F=G$. We will hopefully achieve these two studies in a forthcoming paper. The third problem is to extend our results to samples of different sizes, which  will impose to assume independence between them. 
\section{Proofs}\label{sec:proof}
\subsection{Proof of Theorem \ref{theo:Consistance}}
First we have
\[
0\leqslant W_{c}(F,G)\leqslant\int_{0}^{1}\left(  V(F^{-1}(u))+V(G^{-1}(u))\right)
du=\mathbb{E}  V(X)  +\mathbb{E}  V(Y) .
\]
Since $F$ is
strictly increasing by Glivenko-Cantelli's theorem the almost sure  convergence
$F_{n}^{-1}(u)\rightarrow F^{-1}(u)$ holds for any $u\in\left[  0,1\right]  $. Given
any $0<\alpha<\beta<1$, applying Dini's theorem to the increasing functions
$F_{n}^{-1}$ further yields%
\[
\lim_{n\rightarrow\infty}\sup_{u\in(\alpha,\beta)}\left\vert F_{n}%
^{-1}(u)-F^{-1}(u)\right\vert =\lim_{n\rightarrow\infty}\sup_{u\in
(\alpha,\beta)}\left\vert G_{n}^{-1}(u)-G^{-1}(u)\right\vert =0\  a.s.
\] It follows, by continuity of $c(x,y)$,%
\[
\lim_{n\rightarrow\infty}\int_{\alpha}^{\beta}\left\vert c(F_{n}^{-1}%
(u),G_{n}^{-1}(u))-c(F^{-1}(u),G^{-1}(u))\right\vert du=0\quad a.s.
\]
It remains to study%
\[
\frac{1}{n}%
{\displaystyle\sum\limits_{i=\left[  \beta n\right]  }^{n}}
c(X_{(i)},Y_{(i)})\leqslant\frac{1}{n}%
{\displaystyle\sum\limits_{i=\left[  \beta n\right]  }^{n}}
V(X_{(i)})+\frac{1}{n}%
{\displaystyle\sum\limits_{i=\left[  \beta n\right]  }^{n}}
V(Y_{(i)})
\]
since the lower quantiles sums can be handled similarily. Let $\beta^{-}%
<\beta$ and consider the random variable $Z_{i}=V(X_{i})$ and $\tilde Z _{i}%
=1_{Z_{i}\geqslant F_{Z}^{-1}(\beta^{-})}Z_{i}$ where $F_{Z}$ is the
$c.d.f.$ of $Z=V(X)$. 
Since ${\cal E}_X(\beta^-)=\mathbb{E} \tilde Z   \leqslant
\mathbb{E} V(X)  <+\infty$ it holds%
\[
\lim_{n\rightarrow+\infty}\frac{1}{n}%
{\displaystyle\sum\limits_{i=1}^{n}}
\tilde Z _{i}={\cal E}_X(\beta^-)\quad a.s.
\]
with ${\cal E}_X(\beta^-)\rightarrow0$ as $\beta^{-}\rightarrow1$.
Since  $F_{Z}$ is strictly increasing and the empirical
quantile of order $\beta$ of $Z_{1},...,Z_{n}$ is $V(F_{n}^{-1}(\beta))$  we get%
\[
\lim_{n\rightarrow+\infty}V(X_{(\left[  \beta n\right]  )})=F_{Z}^{-1}%
(\beta)>F_{Z}^{-1}(\beta^{-})\quad a.s.
\]
Therefore, with probability one we ultimately have%
\[
\frac{1}{n}
{\displaystyle\sum\limits_{i=\left[  \beta n\right]  }^{n}}
V(X_{(i)})\leqslant\frac{1}{n}%
{\displaystyle\sum\limits_{i=1}^{n}}
\tilde Z _{i}<2{\cal E}_X(\beta^-).
\]
To conclude we introduce two increasing sequences $\beta_{k}^{-}<\beta_{k}<1$
such that $\beta_{k}^{-}\rightarrow1$ as $k\rightarrow+\infty$ and consider
the associated ${\cal E}_X(\beta_k^-)=\int_{\beta_{k}^{-}}^{1}V(F^{-1}%
(u))\rightarrow0$ and ${\cal E}_Y(\beta_k^-)=\int_{\beta_{k}^{-}}^{1}V(G^{-1}%
(u))\rightarrow0$. Almost surely for  all $k$ simultaneously, using $G^{-1}(u)\leqslant F^{-1}(u)$ for $u$ large enough, we have%
\begin{align*}
&\lim_{n\rightarrow\infty}\int_{\alpha}^{\beta_{k}}\left\vert c(F_{n}%
^{-1}(u),G_{n}^{-1}(u))-c(F^{-1}(u),G^{-1}(u))\right\vert du =0\\
&\underset{n\rightarrow+\infty}{\lim\sup}\int_{\beta_{k}}^{1}c(F_{n}%
^{-1}(u),G_{n}^{-1}(u))du  \leqslant 2{\cal E}_X(\beta_k^-)+2{\cal E}_Y(\beta_k^-)\\
&\int_{\beta_{k}}^{1}c(F^{-1}(u),G^{-1}(u))du\leqslant{\cal E}_X(\beta_k^-)+{\cal E}_Y(\beta_k^-)%
\end{align*}
This
proves that $W_{c}(\F_{n},\mathbb G_{n})\rightarrow W_{c}(F,G)$ almost surely.

\subsection{Proof of Theorem \ref{mainth}}
The proof of Theorem  \ref{mainth} is organised as follows.\\
In Section \ref{sec:varlim} we prove  \eqref{delta2}.  Section \ref{sec:weak} is dedicated to the proof of the weak convergence of $\sqrt{n}\left(  W_{c}(\F_{n},\mathbb G_{n})-W_{c}(F,G)\right)$. Thanks to  definition \ref{defhyp} we only deal with the upper part of the integral. For that purpose we split the  interval $(1/2,1)$ into four parts, $(1/2,F(M))$, $(F(M), 1-h_n/n)$, $( 1-h_n/n,1-k_n/n)$, $( 1-k_n/n,1)$, where $F(M),h_n,k_n$ will be specified further on. The first integral is the main term and the other ones will be proved to be small. 
We study the integral over  $( 1-k_n/n,1)$ in 
 Step 1 of Section \ref{sec:weak},  the one over $( 1-h_n/n,1-k_n/n)$ in 
  Step 2 of Section \ref{sec:weak} and the one over $(F(M), 1-h_n/n)$ in 
Step 3 of Section \ref{sec:weak}. Finally, we deal with the main part in Step 4 of Section \ref{sec:weak}.

\subsubsection{The limiting variance.}\label{sec:varlim}

In this section we establish that $(C)$, $(FG)$ and $(CFG)$ imply that
$\sigma^{2}(\Pi,c)<+\infty$ in (\ref{delta2}). The covariance matrix
$\Sigma(u,v)$ and the gradient $\nabla(u)$ are defined at (\ref{sigmauv}) and
(\ref{deltau}). It is sufficient to study the right hand tails, corresponding
to the upper domain of integration $\left[  1/2,1\right]  ^{2}$. As a matter
of fact, this implies the same for $\left[  0,1/2\right]  ^{2}$ according to
Definition \ref{defhyp}, then similar arguments hold for mixing both tails
through $\left[  1/2,1\right]  \times\left[  0,1/2\right]  $ and $\left[
0,1/2\right]  \times\left[  1/2,1\right]  $ by separating the variables
exactly as we show below. Hence by cutting $\left[  1/2,1\right]  =\left[
1/2,\overline{u}\right]  \cup\left[  \overline{u},1\right]  $ into mid
quantiles and extremes we are reduced to control $\nabla(u)\Sigma
(u,v)\nabla(v)$ on $\left[  \overline{u},1\right]  \times\left[  \overline
{u},1\right]  $ then on $\left[  1/2,\overline{u}\right]  \times\left[
1/2,1\right]  $. The forthcoming two lemmas are then enough to conclude that
(\ref{delta2}) is true under $(C) $, $(FG)$ and $(CFG)$.

\begin{lemma}
\label{variance} Under $(C2)$, $(FG1)$, $(FG4)$ and $(CFG)$ we have, for any
$\overline{u}>F(m)$,%
\[
\sigma^{2}(\overline{u})=%
{\displaystyle\int\nolimits_{\overline{u}}^{1}}
{\displaystyle\int\nolimits_{\overline{u}}^{1}}
\nabla(u)\Sigma(u,v)\nabla(v)dudv<+\infty.
\]

\end{lemma}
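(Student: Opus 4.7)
The idea is to reduce the double tail integral defining $\sigma^{2}(\bar u)$ to one-dimensional integrals by factorising the integrand, and then to evaluate each such integral via a change of variable to the quantile scale in which $(CFG)$ gives the required exponential balance.

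First, I expand $\nabla(u)\Sigma(u,v)\nabla(v)$ as a sum of four terms of the form $A(u)B(v)K(u,v)$, where $A,B$ belong to $\{\partial_{x}c/h_{X},\partial_{x}c/h_{Y},\partial_{y}c/h_{X},\partial_{y}c/h_{Y}\}$ evaluated at $(F^{-1}(u),G^{-1}(u))$, and $K(u,v)$ is one of $\min(u,v)-uv$, $\Pi(u,v)-uv$, or $\Pi(v,u)-uv$. Each such $K$ is the covariance of two indicators with uniform marginals, so Cauchy--Schwarz yields $|K(u,v)|\leqslant\sqrt{u(1-u)\,v(1-v)}\leqslant\sqrt{(1-u)(1-v)}$ on $[\bar u,1]^{2}$. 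The integrand then factorises in $u$ and $v$, and it suffices to show that for each choice of $\partial_{\ast}\in\{\partial_{x}c,\partial_{y}c\}$ and $h_{\ast}\in\{h_{X},h_{Y}\}$,
\[
J_{\partial_{\ast},h_{\ast}}=\int_{\bar u}^{1}\bigl|\partial_{\ast}c(F^{-1}(u),G^{-1}(u))\bigr|\,\frac{\sqrt{1-u}}{h_{\ast}(u)}\,du<+\infty.
\]

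By $(C2)$ the cost on the upper tail is $c(x,y)=\rho(|x-y|)=e^{l(|x-y|)}$, so both $|\partial_{x}c|$ and $|\partial_{y}c|$ equal $\rho'(F^{-1}(u)-G^{-1}(u))$ on $(\bar u,1)$ thanks to $(FG4)$. Convexity of $\rho$ makes $\rho'$ nondecreasing, and $G^{-1}(u)\geqslant m>0$ gives $\rho'(F^{-1}(u)-G^{-1}(u))\leqslant\rho'(F^{-1}(u))=l'(F^{-1}(u))e^{l(F^{-1}(u))}$. For the $h_{X}$-weighted integrals I substitute $s=F^{-1}(u)$, turning $du/h_{X}(u)=ds$ and $\sqrt{1-u}=e^{-\psi_{X}(s)/2}$, whence the integral equals $\int l'(s)\exp(l(s)-\psi_{X}(s)/2)\,ds$. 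The consequence \eqref{eq:psil} of $(CFG)$, applied with $x=\psi_{X}(s)$, yields $l(s)-\psi_{X}(s)/2\leqslant -\theta\log l(s)+K$, and the change $t=l(s)$ reduces the integrand to $t^{-\theta}$, which is integrable since $\theta>1+\theta_{1}\geqslant 1$.

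The main obstacle is the $h_{Y}$-weighted integrals, where the cost derivative is driven by the heavier tail of $F$ but the weight comes from the lighter tail of $G$. Here I would substitute $s=G^{-1}(u)$ and set $w(s)=F^{-1}(G(s))$, exploiting the coupling identity $\psi_{X}(w(s))=\psi_{Y}(s)$ to bring $(CFG)$ to bear. As above, $\rho'$ increasing and $s\geqslant m$ reduce the problem to $\int\rho'(w(s))e^{-\psi_{Y}(s)/2}\,ds$, and \eqref{eq:psil} applied at $w(s)$ gives $e^{l(w(s))-\psi_{Y}(s)/2}\leqslant C\psi_{Y}(s)^{-\theta}$. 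It remains to control $\int l'(w(s))\psi_{Y}(s)^{-\theta}\,ds$, which I handle via a second change of variable to the $w$ scale, using the tail ordering $w(s)\geqslant s$ and the eventual monotonicity of $l'/l^{\theta}$ that follows from $l\in RV_{2}^{+}(\gamma,\tau_{1})$ together with $\theta>1$. This reduces the integral to the same form as $J_{X}$ and closes the argument.
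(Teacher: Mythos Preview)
Your proof is correct and follows the same skeleton as the paper: expand $\nabla(u)\Sigma(u,v)\nabla(v)$ into four pieces, bound each covariance kernel by $\sqrt{(1-u)(1-v)}$ to factorise, and reduce everything to showing that the two one-dimensional integrals
\[
\int_{\bar u}^{1}\rho'\!\circ\!\tau(u)\,\frac{\sqrt{1-u}}{h_{X}(u)}\,du,
\qquad
\int_{\bar u}^{1}\rho'\!\circ\!\tau(u)\,\frac{\sqrt{1-u}}{h_{Y}(u)}\,du
\]
are finite. Two minor slips are worth noting: $\rho'$ is only nondecreasing on $(l_{2},\infty)$ (Proposition~\ref{convex}), so the bound $\rho'(\tau(u))\leqslant\rho'(F^{-1}(u))$ needs a harmless constant $k_{1}$ as in the paper; and \eqref{eq:psil} gives $l(s)-\psi_{X}(s)/2\leqslant-\theta\log\psi_{X}(s)+K$, not $-\theta\log l(s)$, though $\psi_{X}\geqslant 2l$ from \eqref{CFG1ab} repairs this immediately.

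The genuine difference is in the $h_{Y}$ integral. The paper writes $\rho'=l'\rho$, invokes $l'(x)\leqslant(1+\gamma)l(x)^{\theta_{1}'}/x$ via \eqref{theta1}, and after the change $u=G(x)$ reaches $\int x^{-1}\psi_{Y}(x)^{\theta_{1}'-\theta}\,dx$, which converges only because $\theta>1+\theta_{1}$. Your route is cleaner here: after the same change of variable and the same use of \eqref{eq:psil} to get $e^{l(w(s))-\psi_{Y}(s)/2}\leqslant C\psi_{Y}(s)^{-\theta}\leqslant C' l(w(s))^{-\theta}$, you exploit $w(s)\geqslant s$ together with the eventual monotone decrease of $l'/l^{\theta}$ (which indeed follows from $l\in RV_{2}^{+}(\gamma,\tau_{1})$ and $\theta>1$, since $l'/l^{\theta}\in RV(\gamma-1-\theta\gamma)$ with negative index when $\gamma>0$, while for $\gamma=0$ both $l'$ is decreasing and $l$ increasing) to dominate by $\int l'(s)l(s)^{-\theta}\,ds<\infty$. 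This needs only $\theta>1$ for the present lemma, whereas the paper's argument consumes the full strength $\theta>1+\theta_{1}$ of $(CFG)$; that extra margin is, however, genuinely required later (e.g.\ in Lemma~\ref{lem:truc2}, Case~1), so your saving is local rather than global.
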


\begin{proof}
By $(C2)$ we have, for $x\geqslant y\geqslant m$,%
\[
\frac{\partial}{\partial x}c\left(  x,y\right)  =-\frac{\partial}{\partial
y}c\left(  x,y\right)  =\frac{\partial}{\partial x}\rho(x-y)=l^{\prime
}(x-y)\rho(x-y)=\rho^{\prime}(x-y).
\]
By  $(FG4)$  it holds $
F^{-1}(u)\geqslant\tau(u)=F^{-1}(u)-G^{-1}(u)\geqslant\tau_{0}>0$ for $u>F(m)$. 
Thus, for $u\in\left[  \overline{u},1\right]  $, $
\nabla(u)=\left(  \rho^{\prime}\circ\tau(u),-\rho^{\prime}\circ\tau(u)\right)
.$
Let us split $\sigma^{2}(\overline{u})$ into 
\begin{align*}
A_{1}  & =%
{\displaystyle\int\nolimits_{\overline{u}}^{1}}
{\displaystyle\int\nolimits_{\overline{u}}^{1}}
\rho^{\prime}\circ\tau(u)\frac{\min(u,v)-uv}{h_{X}(u)h_{X}(v)}\rho^{\prime
}\circ\tau(v)dudv\\
A_{2}  & =-%
{\displaystyle\int\nolimits_{\overline{u}}^{1}}
{\displaystyle\int\nolimits_{\overline{u}}^{1}}
\rho^{\prime}\circ\tau(u)\frac{\Pi(v,u)-uv}{h_{X}(v)h_{Y}(u)}\rho^{\prime
}\circ\tau(v)dudv\\
A_{3}  & =-%
{\displaystyle\int\nolimits_{\overline{u}}^{1}}
{\displaystyle\int\nolimits_{\overline{u}}^{1}}
\rho^{\prime}\circ\tau(u)\frac{\Pi(u,v)-uv}{h_{X}(u)h_{Y}(v)}\rho^{\prime
}\circ\tau(v)dudv\\
A_{4}  & =%
{\displaystyle\int\nolimits_{\overline{u}}^{1}}
{\displaystyle\int\nolimits_{\overline{u}}^{1}}
\rho^{\prime}\circ\tau(u)\frac{\min(u,v)-uv}{h_{Y}(v)h_{Y}(u)}\rho^{\prime
}\circ\tau(v)dudv.
\end{align*}
Observe that if $0<u<v<1$ then%
\[
0\leqslant\frac{\min(u,v)-uv}{\sqrt{1-u}\sqrt{1-v}}=u\sqrt{\frac{1-v}{1-u}%
}\leqslant1
\]
so that we always have $0\leqslant\min(u,v)-uv\leqslant\sqrt{1-u}\sqrt{1-v}$
and we get 
\[
\left\vert A_{1}\right\vert \leqslant\left(
{\displaystyle\int\nolimits_{\overline{u}}^{1}}
\rho^{\prime}\circ\tau(u)\frac{\sqrt{1-u}}{h_{X}(u)}du\right)  ^{2}%
,\quad\left\vert A_{4}\right\vert \leqslant\left(
{\displaystyle\int\nolimits_{\overline{u}}^{1}}
\rho^{\prime}\circ\tau(u)\frac{\sqrt{1-u}}{h_{Y}(u)}du\right)  ^{2}.
\]
Consider the bound of $\left\vert A_{1}\right\vert $ first. By $(C2)$, 
$\rho^{\prime}$ is $\mathcal{C}_{1}(m,+\infty)$ and positive. Now, as
$u\rightarrow1$, $\tau(u)\geqslant\tau_{0}>0$ is either unbounded or bounded.
In both cases we have
\[
0<\rho^{\prime}(\tau(u))\leqslant\max\left(  \rho^{\prime}\circ F^{-1}%
(u),\sup_{\tau_{0}<x\leqslant l_{2}}\rho^{\prime}(x)\right)  \leqslant
k_{1}\rho^{\prime}\circ F^{-1}(u)
\]
for $k_{1}\geqslant1$ since by Proposition \ref{convex} the increasing
function $\rho$ is convex on $\left(  l_{2},+\infty\right)  $ under $(C2)$.
Observe that $\rho$ is also invertible, so that $\rho(X)$ has quantile
function, density function and density quantile function respectively given
by
\begin{equation}
F_{\rho(X)}^{-1}=\rho\circ F^{-1},\quad f_{\rho(X)}=\frac{f\circ\rho^{-1}%
}{\rho^{\prime}\circ\rho^{-1}},\quad h_{\rho(X)}=f_{\rho(X)}\circ F_{\rho
(X)}^{-1}=\frac{h_{X}}{\rho^{\prime}\circ F^{-1}}.\label{Frho}%
\end{equation}
Recalling that $(CFG)$ implies (\ref{hrhoX}), the change of variable
$x=\rho\circ F^{-1}(u)$ yields%
\begin{align*}
\frac{1}{k_{1}}\int_{\overline{u}}^{1}\rho^{\prime}\circ\tau(u)\frac
{\sqrt{1-u}}{h_{X}(u)}du  & \leqslant\int_{F(m)}^{1}\rho^{\prime}\circ
F^{-1}(u)\frac{\sqrt{1-u}}{h_{X}(u)}du\\
& =\int_{F(m)}^{1}\frac{\sqrt{1-u}}{h_{\rho(X)}(u)}du\\
& =\int_{\rho(m)}^{+\infty}\sqrt{\mathbb{P}\left(  \rho(X)>x\right)
}dx<+\infty.
\end{align*}
Having proved that $\left\vert A_{1}\right\vert <+\infty$ let us next study
the upper bound of $\left\vert A_{4}\right\vert $. Under $(C2)$ and
(\ref{theta1}) we have, for some $\varepsilon_{1}(x)\rightarrow\gamma$,%
\[
\rho^{\prime}(x)=l^{\prime}(x)\rho(x)=\varepsilon_{1}(x)\frac{l(x)}{x}%
\rho(x)\leqslant(1+\gamma)\frac{l(x)^{\theta_{1}^{\prime}}}{x}\rho(x)
\]
where $\theta_{1}^{\prime}\in(\theta_{1},\theta-1)$ if $\gamma=0$, and
$\theta_{1}^{\prime}=1$ if $\gamma>0$. It then follows from the change of
variable $u=G(x)$ that, by setting $\phi=G^{-1}\circ F=\psi_{Y}^{-1}\circ\psi_{X}$,
\begin{align}
&
{\displaystyle\int\nolimits_{\overline{u}}^{1}}
\rho^{\prime}\circ\tau(u)\frac{\sqrt{1-u}}{h_{Y}(u)}du\nonumber\\
& \leqslant(1+\gamma)%
{\displaystyle\int\nolimits_{G^{-1}(\overline{u})}^{+\infty}}
\frac{(l\circ\phi^{-1}(x))^{\theta_{1}^{\prime}}}{\phi^{-1}(x)}\rho\circ
\phi^{-1}(x)\sqrt{\mathbb{P}\left(  Y>x\right)  }dx.\label{rhohy}%
\end{align}
Now, by $(FG4)$ we have%
\[
x\leqslant\phi^{-1}(x)=F^{-1}\circ G(x)=\psi_{X}^{-1}\circ\psi_{Y}(x)=\psi
_{X}^{-1}\left(  \log\left(  \frac{1}{\mathbb{P}\left(  Y>x\right)  }\right)
\right)
\]
thus by \eqref{eq:psil}  we have
\[
l\circ\phi^{-1}(x)\leqslant\frac{1}{2}\log\left(  \frac{1}{\mathbb{P}\left(
Y>x\right)  }\right)  -\theta\log\log\left(  \frac{1}{\mathbb{P}\left(
Y>x\right)  }\right)+K .
\]
We can bound (\ref{rhohy}) from above by%
\begin{align*}
& (1+\gamma)
{\displaystyle\int\nolimits_{\phi(m)}^{+\infty}}
\frac{(l\circ\phi^{-1}(x))^{\theta_{1}^{\prime}}}{\phi^{-1}(x)}\exp\left(
l\circ\phi^{-1}(x)\right)  \sqrt{\mathbb{P}\left(  Y>x\right)  }dx\\
& \leqslant K
{\displaystyle\int\nolimits_{\phi(m)}^{+\infty}}
\frac{\left(  \psi_{Y}(x)\right)  ^{\theta_{1}^{\prime}-\theta}}{\psi_{X}^{-1}\circ\psi_{Y}(x)}dx\\
& \leqslant K
{\displaystyle\int\nolimits_{\phi(m)}^{+\infty}}
\frac{1}{x\left(  \psi_{Y}(x)\right)  ^{\theta
-\theta_{1}^{\prime}}}dx\\
& \leqslant K
{\displaystyle\int\nolimits_{\phi(m)}^{+\infty}}
\frac{1}{x\left(  l(x)\right)  ^{\theta-\theta_{1}^{\prime}}}dx.
\end{align*}
The last inequality comes from $\psi_Y(x)>\psi_X(x)$ by ($FG4$). If $\gamma>0$ then
$\theta-\theta_{1}^{\prime}=\theta-1>0$ and $l(x)>x^{\gamma/2}$ hence the
bounding integral is finite. If $\gamma=0$ then $l(x)\geqslant\log x$ by
(\ref{L1}) and having enforced $\theta-\theta_{1}>\theta-\theta_{1}^{\prime
}>1$ also makes the above integral finite.
We have shown that $\left\vert
A_{4}\right\vert <+\infty$. It remains to bound $A_{2}=A_{3}$. Since $F$ and
$G$ are continuous it holds%
\begin{align*}
\Pi(u,v)  & \leqslant\min\left(  \mathbb{P}\left(  X\leqslant F^{-1}%
(u)\right)  ,\mathbb{P}(Y\leqslant G^{-1}(v))\right)  =\min\left(  u,v\right)
\\
\Pi(u,v)  & \geqslant\mathbb{P}\left(  X\leqslant F^{-1}(u)\right)
+\mathbb{P}(Y\leqslant G^{-1}(v))-1=u+v-1
\end{align*}
and thus%
\begin{align*}
\Pi(u,v)-uv  & \leqslant\min(u,v)-uv\leqslant\sqrt{1-u}\sqrt{1-v}\\
\Pi(u,v)-uv  & \geqslant u+v-1-uv=-(1-u)(1-v)
\end{align*}
which proves that $
\left\vert \Pi(u,v)-uv\right\vert \leqslant\sqrt{1-u}\sqrt{1-v}$.
Hence $A_{2}=A_{3}$ satisfies%
\begin{align*}
\left\vert A_{2}\right\vert  & \leqslant%
{\displaystyle\int\nolimits_{\overline{u}}^{1}}
\rho^{\prime}\circ\tau(v)\frac{\sqrt{1-v}}{h_{X}(v)}dv%
\displaystyle\int\nolimits_{\overline{u}}^{1}
\rho^{\prime}\circ\tau(u)\frac{\sqrt{1-u}}{h_{Y}(u)}du\\
& \leqslant k_{1}^{2}\int_{F(m)}^{1}\rho^{\prime}\circ F^{-1}(v)\frac
{\sqrt{1-v}}{h_{X}(v)}du\int_{F(m)}^{1}\rho^{\prime}\circ F^{-1}(u)\frac
{\sqrt{1-u}}{h_{Y}(u)}du
\end{align*}
where these integrals are already proved to be finite. Finally $\sigma
^{2}(\overline{u})=A_{1}+A_{2}+A_{3}+A_{4}<+\infty$.
\end{proof}

\begin{lemma}
\label{variancebis}Under $(C1)$, $(C2)$, $(FG1)$, $(FG4)$\ and $(CFG)$ we have, for any
$\overline{u}>F(m)$,%
\begin{align*}
&\sigma_{-}^{2}(\overline{u})=%
{\displaystyle\int\nolimits_{1/2}^{\overline{u}}}
{\displaystyle\int\nolimits_{1/2}^{1}}
\nabla(u)\Sigma(u,v)\nabla(v)dudv<+\infty,\\&
 \sigma_{+}^{2}(\overline{u})={\displaystyle\int\nolimits_{\overline{u}}^{1}}
{\displaystyle\int\nolimits_{1/2}^{\overline{u}}}
\nabla(u)\Sigma(u,v)\nabla(v)dudv<+\infty.
\end{align*}

\end{lemma}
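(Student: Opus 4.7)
The plan is to reduce both $\sigma_-^2(\bar u)$ and $\sigma_+^2(\bar u)$ to a compactly supported piece together with a ``middle--tail'' mixed piece, and then invoke the integral bounds already established in the proof of Lemma \ref{variance}. A direct expansion of the scalar $\nabla(u)\Sigma(u,v)\nabla(v)$ into its four entry products shows that swapping $u$ and $v$ leaves the expression invariant (the off-diagonal entries of $\Sigma$ are interchanged but so are the arguments of $\nabla$). Hence, after a Fubini step,
\[
\sigma_+^2(\bar u)=\iint_{[1/2,\bar u]\times[\bar u,1]}\nabla(u)\Sigma(u,v)\nabla(v)\,du\,dv,
\]
and $\sigma_-^2(\bar u)$ equals this integral plus the analogous one over $[1/2,\bar u]^2$, so it suffices to prove that each of these two contributions is finite.

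On the compact square $[1/2,\bar u]^2$ the argument is one of plain boundedness. By $(FG1)$, $h_X=f\circ F^{-1}$ and $h_Y=g\circ G^{-1}$ are continuous and strictly positive on $(0,1)$, hence bounded below by positive constants on $[1/2,\bar u]$, which keeps every entry of $\Sigma(u,v)$ bounded there. As to $\nabla$: the definitions $m>\max(F^{-1}(1/2),G^{-1}(1/2))$ and $\bar u=\max(F(m),G(m))$ imply that for $u\in[1/2,\bar u]$ the point $(F^{-1}(u),G^{-1}(u))$ always has at least one coordinate in $[-m,m]$ and both coordinates in $[-m,+\infty)$, so it lies in $[-m,m]\times\mathbb{R}\cup\mathbb{R}\times[-m,m]$, the domain where $c\in\mathcal{C}_1$ by $(C1)$. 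Thus $\nabla$ is continuous (hence bounded) on $[1/2,\bar u]$, and the integrand is bounded on a compact set.

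For the mixed piece over $[1/2,\bar u]\times[\bar u,1]$ I would expand the quadratic form into its four terms. On $[1/2,\bar u]$, $\nabla(u)$, $1/h_X(u)$, $1/h_Y(u)$ and $\sqrt{1-u}$ are all bounded; for $u\in[1/2,\bar u]$ and $v\in[\bar u,1]$ we have $u\leqslant v$, so $\min(u,v)-uv=u(1-v)\leqslant\sqrt{1-v}$ and $|\Pi(u,v)-uv|\leqslant\sqrt{1-u}\sqrt{1-v}\leqslant K\sqrt{1-v}$ exactly as in Lemma \ref{variance}. Integrating $u$ out then leaves at most a constant multiple of
\[
\int_{\bar u}^1\frac{\sqrt{1-v}}{h_X(v)}|\nabla(v)|\,dv+\int_{\bar u}^1\frac{\sqrt{1-v}}{h_Y(v)}|\nabla(v)|\,dv,
\]
and on $[\bar u,1]$, by $(FG4)$, $\nabla(v)=(\rho'\circ\tau(v),-\rho'\circ\tau(v))$, so these two integrals coincide (up to a constant) with the integrals of $\rho'\circ\tau(v)\sqrt{1-v}/h_X(v)$ and $\rho'\circ\tau(v)\sqrt{1-v}/h_Y(v)$ already shown to be finite in the proof of Lemma \ref{variance} under $(C2)$ and $(CFG)$.

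The only technical point to watch out for is verifying that $(F^{-1}(u),G^{-1}(u))$ actually lies in the domain of $(C1)$ for every $u\in[1/2,\bar u]$, which is an unpacking of the definitions of $m$ and $\bar u$ combined with the choice of ``heavier tail''; all other steps are routine reductions to the single-variable tail estimates of Lemma \ref{variance}, so I do not anticipate a serious obstacle.
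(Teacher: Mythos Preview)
Your proposal is correct and follows essentially the same route as the paper: split off the compact square $[1/2,\bar u]^2$ where everything is bounded by $(C1)$ and $(FG1)$, then factor the mixed rectangle $[1/2,\bar u]\times[\bar u,1]$ into a bounded $u$-integral times the tail $v$-integrals $\int_{\bar u}^1 \rho'\circ\tau(v)\sqrt{1-v}/h_Z(v)\,dv$ already handled in Lemma~\ref{variance}. The paper does exactly this, naming the four cross terms $A_{01},\dots,A_{04}$ and bounding them by products $I_Z J_{Z'}$; for $\sigma_+^2(\bar u)$ it simply says ``in the same way'', whereas you make the symmetry $\nabla(u)\Sigma(u,v)\nabla(v)=\nabla(v)\Sigma(v,u)\nabla(u)$ explicit, which is a cleaner way to dispose of that case. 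Your extra care about locating $(F^{-1}(u),G^{-1}(u))$ inside the $(C1)$ domain is a point the paper glosses over; it is justified since $m$ is allowed to be taken large enough.
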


\begin{proof}
Since $F^{-1}$ and $G^{-1}$ are bounded on $\left[  1/2,\overline{u}\right]  $
we have, by $(C1)$, that $\nabla(u)$ exists and is bounded on $\left[
1/2,\overline{u}\right]  $. Likewise $(FG1)$ ensures that $h_{X}$ and $h_{Y}$
are bounded on $\left[  1/2,\overline{u}\right]  $ hence $\Sigma(u,v)$ is
bounded on $\left[  1/2,\overline{u}\right]  ^{2}$. As a consequence,%
\[
 A_{0}=%
{\displaystyle\int\nolimits_{1/2}^{\overline{u}}}
{\displaystyle\int\nolimits_{1/2}^{\overline{u}}}
\nabla(u)\Sigma(u,v)\nabla(v)dudv, \quad \left\vert A_{0}\right\vert <+\infty.
\]
By $(C2)$ we have $\nabla(u)=\left(  \rho^{\prime}\circ
\tau(u),-\rho^{\prime}\circ\tau(u)\right)  $ on $\left[  \overline
{u},1\right]  $, thus%
\begin{align*}
A_{01}  & =%
{\displaystyle\int\nolimits_{1/2}^{\overline{u}}}
{\displaystyle\int\nolimits_{\overline{u}}^{1}}
\frac{\partial}{\partial x}c\left(  F^{-1}(u),G^{-1}(u)\right)  \frac
{\min(u,v)-uv}{h_{X}(u)h_{X}(v)}\rho^{\prime}\circ\tau(u)dudv\\
A_{02}  & =-
{\displaystyle\int\nolimits_{1/2}^{\overline{u}}}
{\displaystyle\int\nolimits_{\overline{u}}^{1}}
\frac{\partial}{\partial y}c\left(  F^{-1}(u),G^{-1}(u)\right)  \frac
{\Pi(v,u)-uv}{h_{X}(v)h_{Y}(u)}\rho^{\prime}\circ\tau(u)dudv\\
A_{03}  & =-%
{\displaystyle\int\nolimits_{1/2}^{\overline{u}}}
{\displaystyle\int\nolimits_{\overline{u}}^{1}}
\frac{\partial}{\partial x}c\left(  F^{-1}(u),G^{-1}(u)\right)  \frac
{\Pi(u,v)-uv}{h_{X}(u)h_{Y}(v)}\rho^{\prime}\circ\tau(u)dudv\\
A_{04}  & =%
{\displaystyle\int\nolimits_{1/2}^{\overline{u}}}
{\displaystyle\int\nolimits_{\overline{u}}^{1}}
\frac{\partial}{\partial y}c\left(  F^{-1}(u),G^{-1}(u)\right)  \frac
{\min(u,v)-uv}{h_{Y}(v)h_{Y}(u)}\rho^{\prime}\circ\tau(u)dudv.
\end{align*}
Along the same arguments as in Lemma \ref{variance} we have%
\[
\left\vert A_{01}\right\vert \leqslant I_{X}J_{X},\quad\left\vert
A_{02}\right\vert \leqslant I_{Y}J_{X},\quad\left\vert A_{03}\right\vert
\leqslant I_{X}J_{Y},\quad\left\vert A_{04}\right\vert \leqslant I_{Y}J_{Y},
\]
where, by the previous boundedness argument on $\left[  1/2,\overline
{u}\right]  $,%
\begin{align*}
I_{X}  & =\left(
{\displaystyle\int\nolimits_{1/2}^{\overline{u}}}
\left\vert \frac{\partial}{\partial x}c\left(  F^{-1}(u),G^{-1}(u)\right)
\right\vert \frac{\sqrt{1-u}}{h_{X}(u)}\right)  <+\infty\\
I_{Y}  & =\left(
{\displaystyle\int\nolimits_{1/2}^{\overline{u}}}
\left\vert \frac{\partial}{\partial y}c\left(  F^{-1}(u),G^{-1}(u)\right)
\right\vert \frac{\sqrt{1-u}}{h_{Y}(u)}\right)  <+\infty
\end{align*}
and by $(CFG)$, (\ref{eq:lpsi}), (\ref{CFG1ab}) and (\ref{hrhoX}) on $\left[
\overline{u},1\right]  $,%
\begin{align*}
\frac{J_{X}}{k_{1}}  & =\left(
{\displaystyle\int\nolimits_{\overline{u}}^{1}}
\rho^{\prime}\circ F^{-1}(v)\frac{\sqrt{1-v}}{h_{X}(v)}dv\right)  <+\infty\\
\frac{J_{Y}}{k_{1}}  & =\left(
{\displaystyle\int\nolimits_{\overline{u}}^{1}}
\rho^{\prime}\circ F^{-1}(v)\frac{\sqrt{1-v}}{h_{Y}(v)}dv\right)  <+\infty.
\end{align*}
Therefore $\sigma_{-}^{2}(\overline{u})=A_{0}+A_{01}+A_{02}+A_{03}%
+A_{04}<+\infty$. In the same way the result holds for  $\sigma_{+}^{2}(\overline{u})$.
\end{proof}

\subsubsection{Proof of the weak convergence}\label{sec:weak}

\begin{bf}Step1: Extreme Values\end{bf}\\
In this first step we show that the
contribution of extremes is negligible despite the rate $\sqrt{n}$. 
Without
information on joint laws of extreme values we treat separately the upper tail
of the integrals $W_{c}(\F_{n},\G_{n})$ and $W_{c}(F,G)$. Indeed the latter in
not a centering of the former at the very end of tails so that the empirical
quantile processes cannot help.\smallskip

\noindent Let $K_{n}$ be a positive increasing sequence such that
\begin{equation}
K_{n}\rightarrow+\infty,\quad\frac{K_{n}}{\log\log n}\rightarrow0.\label{Kn}%
\end{equation}
Define%
\begin{equation}
k_{n}= \frac{\sqrt{n}}{K_{n}\exp\left(  l\circ\psi_{X}^{-1}(\log
n+K_{n})\right)  } .\label{kn}%
\end{equation}
Under $(C2)$ and $(FG1)$ we have $l\circ\psi_{X}^{-1}(x)\rightarrow+\infty$ as
$x\rightarrow+\infty$ thus $k_{n}=o\left(  \sqrt{n}/K_{n}\right)  $. Moreover,
by \eqref{eq:psil}  and (\ref{Kn}) for any
$\theta^{\prime}\in\left(  1,\theta\right)  $ and all $n$ large enough it
holds%
\begin{equation}
k_{n}\geqslant\frac{c}{K_{n}}\exp\left(  -\frac{K_{n}}{2}+\theta\log(\log
n+K_{n})\right)  >(\log n)^{\theta^{\prime}}.\label{knbis}%
\end{equation}
Hence we have $\displaystyle {k_{n}/\log\log n\rightarrow+\infty}$ and $k_n/\sqrt n\rightarrow0$. Let us define
\begin{align*}
D_{n}&=%
{\displaystyle\int\nolimits_{1-k_{n}/n}^{1}}
c\left(  F^{-1}(u),G^{-1}(u)\right)  du,\\
S_{n}&=%
{\displaystyle\int\nolimits_{1-k_{n}/n}^{1}}
c\left(  \F_{n}^{-1}(u),\G_{n}^{-1}(u)\right)  du=\frac{1}{n}%
{\displaystyle\sum\limits_{i=n-[k_{n}]}^{n}}
c\left(  X_{(i)},Y_{(i)}\right)  .
\end{align*}

\begin{lemma}\label{lem:DS}
\begin{enumerate}
\item\label{cvDn}Assume that $(C2),$ $(FG1)$, $(FG4)$ and $(CFG)$ hold. Then%
\[
\sqrt{n}D_{n}\rightarrow0.
\]
\item \label{cvSn}Under $(C2)$ and $(CFG)$, we have%
\[
\sqrt{n}S_{n}\rightarrow0\quad\text{in probability.}%
\]
\end{enumerate}
\end{lemma}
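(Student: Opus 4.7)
The two parts of Lemma \ref{lem:DS} are treated separately since the first is deterministic and the second is random, but they share a common breakpoint $u_n := 1 - e^{-K_n}/n$. The calibration \eqref{kn} of $k_n$ amounts precisely to $\rho(F^{-1}(u_n)) = \sqrt{n}/(k_n K_n)$, so $u_n$ is simultaneously the level at which the deterministic upper bound on $\rho(F^{-1})$ has the right size and the level at which the expected number of $X_i$ exceeding the corresponding quantile is $e^{-K_n}$. Note that $1-k_n/n < u_n$ since $k_n \geqslant (\log n)^{\theta'}$ by \eqref{knbis} while $e^{-K_n} \to 0$.

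For the first assertion, I would use $(FG4)$ together with $(C2)$ to write the integrand as $\rho(\tau(u))$ and observe that $\tau(u) \leqslant F^{-1}(u)$ for $u \geqslant \overline{u}$ (since $G^{-1}(u) \geqslant m > 0$), so $D_n \leqslant \int_{1-k_n/n}^1 \rho(F^{-1}(u))\,du$. I would then split at $u_n$. On the inner piece $[1-k_n/n, u_n]$ the monotone bound $\rho(F^{-1}(u)) \leqslant \sqrt{n}/(k_n K_n)$ yields a contribution of order $1/(K_n \sqrt{n})$, so $\sqrt{n}$ times it is $O(1/K_n) \to 0$. On the outer piece $[u_n, 1]$ I would apply the consequence \eqref{eq:psil} of $(CFG)$ to obtain $\rho(F^{-1}(u)) \leqslant K (1-u)^{-1/2}(\log(1/(1-u)))^{-\theta}$, and evaluate the resulting integral by the successive substitutions $v = 1-u$ then $w = \log(1/v)$; the exponential decay $e^{-w/2}$ produces a bound of order $e^{-K_n/2}/(\sqrt{n} (\log n)^\theta)$, whose product with $\sqrt{n}$ also tends to zero.

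For the second assertion, I would apply the bound $c(x,y) \leqslant \rho(x) + \rho(y)$ coming from \eqref{crhosup}, which is valid for $x, y > m$; the event that $X_{(n-[k_n])}$ or $Y_{(n-[k_n])}$ falls below $m$ has probability vanishing by a Chernoff bound since $k_n/n \to 0$ while $1-F(m), 1-G(m) > 0$. Setting $t_n = \rho(F^{-1}(u_n)) = \sqrt{n}/(k_n K_n)$, the union bound yields $\mathbb{P}(\max_i \rho(X_i) > t_n) \leqslant n\,\mathbb{P}(\rho(X) > t_n) = e^{-K_n} \to 0$, and similarly for the $Y_i$. On the intersection of these high probability events, every $\rho(X_{(i)})$ and $\rho(Y_{(i)})$ is bounded by $t_n$, so
\[
\sqrt{n}\,S_n \;\leqslant\; \frac{2([k_n]+1)}{\sqrt{n}}\,t_n \;=\; \frac{2([k_n]+1)}{k_n K_n} \;\longrightarrow\; 0,
\]
which gives convergence in probability.

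The main subtlety is not in any individual calculation but in recognizing why $u_n$ is the correct scale: $(CFG)$ is what makes the outer contribution to $D_n$ absolutely summable against $\sqrt{n}$, while \eqref{kn} is precisely the unique matching of this tail rate with the extreme-value rate $n\,\mathbb{P}(\rho(X) > t_n) = e^{-K_n}$. Once $u_n$ is identified, the remaining steps are short bookkeeping relying on bounds already compiled in Section \ref{sec:hyp}.
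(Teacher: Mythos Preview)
Your argument is correct. For Part~2 it coincides with the paper's proof: both localize to the event $\{X_{(n-[k_n])}>m,\ Y_{(n-[k_n])}>m\}$, apply \eqref{crhosup}, and control $\rho(X_{(n)})$, $\rho(Y_{(n)})$ via a single threshold of order $\sqrt{n}/k_n$; your choice $t_n=\sqrt{n}/(k_nK_n)$ simply makes the union bound exact, $n\,\mathbb{P}(\rho(X)>t_n)=e^{-K_n}$, whereas the paper picks a fixed $\delta$ and invokes monotonicity of $\psi_X\circ l^{-1}$ to reach the same conclusion.

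For Part~1 your route differs slightly from the paper's. You split $[1-k_n/n,1]$ at $u_n=1-e^{-K_n}/n$ and bound the two pieces by monotonicity and by the integrated consequence \eqref{eq:psil} respectively. The paper instead exploits the \emph{differential} form of $(CFG)$: it shows $\bigl(-(1-u)w(u)\bigr)'>w(u)/2$ from \eqref{CFG1ac}, which yields the single-endpoint estimate $\int_{1-k_n/n}^1 w(u)\,du\leqslant \tfrac{2k_n}{n}\,w(1-k_n/n)$, and then reads off $\sqrt{n}D_n\leqslant 2/K_n$ directly from the definition~\eqref{kn}. Your approach is more elementary and uses only the integrated bound \eqref{eq:psil}; the paper's is a bit sharper in that it avoids the explicit tail integral. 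Both reach the same $O(1/K_n)$ conclusion.
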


\begin{proof}
\begin{enumerate}
\item
By $C_2$ and $FG_4$ we have%
\[
D_{n}=%
{\displaystyle\int\nolimits_{1-k_{n}/n}^{1}}
\rho\left(  F^{-1}(u)-G^{-1}(u)\right)  du\leqslant%
{\displaystyle\int\nolimits_{1-k_{n}/n}^{1}}
w(u)du
\]
where%
\[
w(u)=\exp\left(  l\circ F^{-1}(u)\right)  =\exp\left(  l\circ\psi_{X}%
^{-1}\left(  \log\left(  \frac{1}{1-u}\right)  \right)  \right)  .
\]
Under $(CFG)$, for $\theta>1$ it holds, by (\ref{eq:psil}),%
\[
l\circ F^{-1}(u)\leqslant\frac{1}{2}\log\left(  \frac{1}{1-u}\right)
-\theta\log\log\left(  \frac{1}{1-u}\right)+K
\]
thus, as $n\rightarrow+\infty$,%
\[%
{\displaystyle\int\nolimits_{1-k_{n}/n}^{1}}
w(u)du\leqslant\left[  -\frac{K\sqrt{1-u}}{\left(  \log\left(  1/(1-u)\right)
\right)  ^{\theta}}\right]  _{1-k_{n}/n}^{1}=\frac{K\sqrt{k_{n}/n}}{\left(
\log(n/k_{n})\right)  ^{\theta}}\rightarrow0
\]
so that $w(u)$ is integrable on $\left(  \overline{u},1\right)  $. 
By $(CFG)$ 
$\varphi=\psi_{X}\circ l^{-1}$ satisfies
\[
\left(  \varphi^{-1}\right)  ^{\prime}(x)=\frac{1}{\varphi^{\prime}%
\circ\varphi^{-1}(x)}\leqslant\frac{1}{2+2\theta/\varphi^{-1}(x)}%
\]
and for $x$ large enough,%
\begin{equation}
  \left(\varphi^{-1}\right)  ^{\prime}(x)=\left(  l\circ\psi_{X}^{-1}\right)  ^{\prime}(x)\leqslant\frac{1}%
{2+2\theta/(x/2-\theta\log x+K)}<\frac{1}{2}-\frac{\theta}{x}.\label{CFG1ac}%
\end{equation}
We then have%
\[
\left(  -\left(  1-u\right)  w(u)\right)  ^{\prime}=w(u)\left(  1-\left(
l\circ\psi_{X}^{-1}\right)  ^{\prime}\left(  \log\left(  \frac{1}{1-u}\right)
\right)  \right)  >\frac{w(u)}{2}%
\]
which gives%
\[%
{\displaystyle\int\nolimits_{1-k_{n}/n}^{1}}
w(u)du\leqslant2\left[  -\left(  1-u\right)  w(u)\right]  _{1-k_{n}/n}%
^{1}\leqslant\frac{2k_{n}}{n}w\left(  1-\frac{k_{n}}{n}\right)  ,
\]
since $\lim_{u\to 1}\left(  1-u\right)  w(u)=0$.
Recalling (\ref{kn}) it follows that for  $n$ large
enough,%
\begin{align*}
\sqrt{n}D_{n}  & \leqslant\frac{2k_{n}}{\sqrt{n}}\exp\left(  l\circ\psi
_{X}^{-1}\left(  \log\left(  \frac{n}{k_{n}}\right)  \right)  \right) \\
& \leqslant\frac{2}{K_{n}}\exp\left(  l\circ\psi_{X}^{-1}\left(  \log\left(
\frac{n}{k_{n}}\right)  \right)  -l\circ\psi_{X}^{-1}(\log n+K_{n})\right)  .
\end{align*}
By (\ref{Kn}), (\ref{kn}) and (\ref{eq:psil}) with $\theta>1$ we
get%
\begin{align*}
\log\left(  \frac{n}{k_{n}}\right)   & \sim\frac{\log n}{2}+\log K_{n}+l\circ
\psi_{X}^{-1}(\log n+K_{n})\\
& \leqslant\log n+\frac{K_{n}}{2}+\log K_{n}-\theta\log(\log n+K_{n}%
)+K%
\end{align*}
hence $\sqrt{n}D_{n}\leqslant2/K_{n}\rightarrow0$ as $n\rightarrow+\infty$
since $l\circ\psi_{X}^{-1}$ is increasing.\smallskip

\item Next we control $S_n$ the stochastic sum of extreme values. %
Fix $\delta>0$ and consider the events%
\[
A_{n}=\left\{  \sqrt{n}S_{n}\geqslant4\delta\right\}  ,\quad B_{n,X}=\left\{
X_{(n-[k_{n}])}> m\right\}  ,\quad B_{n,Y}=\left\{  Y_{(n-[k_{n}]%
)}> m\right\}  .
\]
We have $$\mathbb{P}\left(  A_{n}\right)   \leqslant\mathbb{P}\left(  A_{n}\cap
B_{n,X}\cap B_{n,Y}\right)  +\mathbb{P}\left(  B_{n,X}^{c}\right)
+\mathbb{P}\left(  B_{n,X}^{c}\right) .$$

Since $F$ and $G$ are strictly increasing we obviously have, for $\xi>0$ and
$u_{0}=F(m+\xi)$, as $n\rightarrow+\infty$,%
\begin{align*}
\mathbb{P}\left(  B_{n,X}^{c}\right)   & =\mathbb{P}\left(  \F_{n}^{-1}\left(
1-\frac{k_{n}}{n}\right)  <m\right) \\
& \leqslant\mathbb{P}\left(  \F_{n}^{-1}(u_{0})<F^{-1}(u_{0})-\xi\right)
\rightarrow0
\end{align*}
and likewise, $\mathbb{P}\left(  B_{n,Y}^{c}\right)  \rightarrow0$. By
(\ref{crhosup}) we can write, under $B_{n,X}\cap B_{n,Y}$,%
\begin{align*}
\sqrt{n}S_{n}  & \leqslant\frac{1}{\sqrt{n}}%
{\displaystyle\sum\limits_{i=n-[k_{n}]}^{n}}
\left(  \rho\left(  X_{(i)}\right)  +\rho\left(  Y_{(i)}\right)  \right) \\
& \leqslant\frac{k_{n}+1}{\sqrt{n}}\left(  \rho\left(  X_{(n)}\right)
+\rho\left(  Y_{(n)}\right)  \right)
\end{align*}
hence $\mathbb{P}\left(  A_{n}\cap B_{n,X}\cap B_{n,Y}\right)  \leqslant
\mathbb{P}\left(  C_{n,X}\right)  +\mathbb{P}\left(  C_{n,Y}\right)  $ where%
\[
C_{n,X}=\left\{  \rho\left(  X_{(n)}\right)  \geqslant\delta\frac{\sqrt{n}%
}{k_{n}}\right\}  ,\quad C_{n,Y}=\left\{  \rho\left(  Y_{(n)}\right)
\geqslant\delta\frac{\sqrt{n}}{k_{n}}\right\}  .
\]
Now we have, by $(FG4)$ and since $X_{1},...,X_{n}$ are independent,%
\[
\mathbb{P}\left(  C_{n,Y}\right)  \leqslant\mathbb{P}\left(  C_{n,X}\right)
=1-\left(  1-\mathbb{P}\left(  \rho(X)>\delta\frac{\sqrt{n}}{k_{n}}\right)
\right)  ^{n}%
\]
then combining $\rho^{-1}(x)=l^{-1}(\log x)$ with (\ref{kn}) gives,
\[
\mathbb{P}\left(  \rho(X)>\delta\frac{\sqrt{n}}{k_{n}}\right)  =\exp\left(
-\psi_{X}\circ l^{-1}\left(  \log\delta+l\circ\psi_{X}^{-1}\left(  \log n+\log
K_{n}\right)  +\log K_{n}\right)  \right)
\]
Now by ($CFG$) $\psi_{X}\circ l^{-1}$ is
increasing.  As soon as $\log K_{n}>\left\vert \log\delta\right\vert $ we get  %
\[
\mathbb{P}\left(  \rho(X)>\delta\frac{\sqrt{n}}{k_{n}}\right)  \leqslant
\exp\left(  -\psi_{X}\circ l^{-1}\left(  l\circ\psi_{X}^{-1}\left(  \log
n+\log K_{n}\right)  \right)  \right)  =\frac{1}{n K_{n}},%
\]
which yields%
\[
\mathbb{P}\left(  C_{n,X}\right)  \leqslant1-\exp\left(  -\frac{K}{ K_{n}%
}\right)  \rightarrow0.
\]
We conclude that%
\begin{align*}
\mathbb{P}\left(  A_{n}\right)   & \leqslant\mathbb{P}\left(  A_{n}\cap
B_{n,X}\cap B_{n,Y}\right)  +\mathbb{P}\left(  B_{n,X}^{c}\right)
+\mathbb{P}\left(  B_{n,X}^{c}\right) \\
& \leqslant\mathbb{P}\left(  C_{n,X}\right)  +\mathbb{P}\left(  C_{n,X}%
\right)  +\mathbb{P}\left(  B_{n,X}^{c}\right)  +\mathbb{P}\left(  B_{n,X}%
^{c}\right)
\end{align*}
satisfies $\mathbb{P}\left(  A_{n}\right)  \rightarrow0$.
\end{enumerate}
\end{proof}

\noindent\begin{bf}Step2: Centered high order quantiles\end{bf}\\

This section ends the part of the proof of Theorem \ref{mainth} devoted to the
secondary order.  We split the arguments
into the three lemmas below. Remind that $k_{n}$ is defined at (\ref{kn}). Let
introduce%
\begin{equation}
h_{n}=n^{\beta},\quad\beta\in\left(  \frac{1}{2},1\right)  ,\quad
I_{n}=\left[  1-\frac{h_{n}}{n},1-\frac{k_{n}}{n}\right]  ,\label{In}%
\end{equation}
and define the centered random integral of non extreme tail quantiles to be%
\[
T_{n}=%
{\displaystyle\int\nolimits_{1-h_{n}/n}^{1-k_{n}/n}}
\left(  c\left(  \F_{n}^{-1}(u),\G_{n}^{-1}(u)\right)  -c\left(  F^{-1}%
(u),G^{-1}(u)\right)  \right)  du.
\]

\begin{lemma}\label{lem:T}
\label{cvTn}Under $(C2)$, $(FG)$ and $(CFG)$ we have%
\[
\underset{n\rightarrow+\infty}{\lim}\sqrt{n}T_{n}=0\quad\text{a.s.}%
\]

\end{lemma}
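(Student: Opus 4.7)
The idea is to bound the integrand in $T_n$ pointwise via the mean value theorem applied to $\rho$, to integrate using an almost-sure weighted bound on the empirical quantile processes, and to verify via $(CFG)$ that the resulting bound decays faster than $n^{-1/2}$.

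On $I_n \subset (\bar u, 1)$, condition $(C2)$ gives $c(x,y) = \rho(|x-y|)$. By the almost-sure consistency arguments from the proof of Theorem \ref{theo:Consistance} combined with $(FG4)$, for $n$ large enough both $\F_n^{-1}(u)$ and $\G_n^{-1}(u)$ exceed $m$ uniformly for $u \in I_n$, and $\F_n^{-1}(u) > \G_n^{-1}(u)$. Writing $a_n(u) = \F_n^{-1}(u) - \G_n^{-1}(u)$, $a(u) = F^{-1}(u) - G^{-1}(u)$, the MVT gives
\[
|\rho(a_n(u)) - \rho(a(u))| \le \rho'(\xi_n(u))\bigl(|\F_n^{-1}(u)-F^{-1}(u)| + |\G_n^{-1}(u)-G^{-1}(u)|\bigr)
\]
with $\xi_n(u)$ between $a_n(u)$ and $a(u)$. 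I would then show that $\rho'(\xi_n(u)) \le 2\rho'(F^{-1}(u))$ almost surely uniformly on $I_n$, by combining the monotonicity of $\rho'$ (from $l \in RV_2^+$) with the fact that $l'(F^{-1}(u))|a_n(u) - a(u)| \to 0$ on $I_n$; this latter control follows from $(FG3)$ and from the choice of $k_n$ in \eqref{kn}, which keeps us away from the extreme-value regime.

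Next I would invoke a weighted LIL of Cs\"org\H{o}--R\'ev\'esz--Mason type for the tail quantile process. Since $k_n/\log\log n \to +\infty$ by \eqref{knbis} and $h_n/n \to 0$, the assumptions $(FG1)$--$(FG3)$ supply the required density-quantile regularity and yield, almost surely,
\[
\sup_{u \in I_n} \frac{\sqrt n\, h_X(u)\,|\F_n^{-1}(u) - F^{-1}(u)|}{\sqrt{1-u}} = O\bigl(\sqrt{\log\log n}\bigr),
\]
and likewise for $G$. Plugging this into the pointwise bound gives
\[
\sqrt n\,|T_n| \le K\sqrt{\log\log n} \int_{I_n} \rho'(F^{-1}(u))\sqrt{1-u}\left(\frac{1}{h_X(u)} + \frac{1}{h_Y(u)}\right)du.
\]

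Finally, the change of variable $x = \rho \circ F^{-1}(u)$ (as in Lemma \ref{variance}) converts the $h_X$-integral into $\int_{x_n}^{x_n'} \sqrt{\P(\rho(X)>x)}\,dx$, with $x_n = \rho \circ F^{-1}(1-h_n/n)$, and $(FG4)$ together with \eqref{eq:psil} allow the parallel treatment of the $h_Y$-term. Under $(CFG)$, the bound \eqref{hrhoX} yields $\sqrt{\P(\rho(X)>x)} = O\bigl(x^{-1}(\log x)^{-\theta}\bigr)$, so the integral is $O\bigl((\log x_n)^{-(\theta-1)}\bigr) = O\bigl((\log n)^{-(\theta-1)}\bigr)$, which for $\theta > 1$ easily beats $\sqrt{\log\log n}$. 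Hence $\sqrt n\, T_n \to 0$ almost surely. The main obstacle is the uniform weighted LIL on $I_n$ together with the accompanying control of $\rho'(\xi_n(u))/\rho'(F^{-1}(u))$: the exponential growth of $\rho$ makes the MVT sensitive to additive fluctuations of the quantiles, and it is the precise calibration of $k_n$ in \eqref{kn} combined with $(FG2)$--$(FG3)$ that reconciles the scale of the empirical noise with the exponential weight.
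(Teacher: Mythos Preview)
Your plan is essentially the paper's own proof: the weighted LIL you invoke is exactly Lemma~\ref{hongrois}, and your MVT/ratio control $\rho'(\xi_n(u))\leqslant 2\rho'(F^{-1}(u))$ is precisely the content of Lemma~\ref{DL} (whose proof does require a case split on $\gamma$ that you rightly flag as the main obstacle). One point to sharpen: the $h_Y$--integral does not reduce to the same $\int\sqrt{\mathbb{P}(\rho(X)>x)}\,dx$ and is not $O((\log n)^{-(\theta-1)})$; the paper handles it via the substitution $\phi^{-1}=F^{-1}\circ G$ and obtains instead $O((\log n)^{-(\theta-\theta_1'-1)})$, which is where the full strength $\theta>1+\theta_1$ of $(CFG)$ (not merely $\theta>1$) is actually used.
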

 The proof of this lemma is based on the two following lemmas whose proof are postponed in the appendix.
In order to bound $T_{n}$ we first evaluate the quantile empirical processes%
\begin{equation}
\beta_{n}^{X}(u)=\sqrt{n}(\F_{n}^{-1}(u)-F^{-1}(u)),\quad\beta_{n}^{Y}%
(u)=\sqrt{n}(\G_{n}^{-1}(u)-G^{-1}(u)).\label{betan}%
\end{equation}

\begin{lemma}
\label{hongrois}Define $\Delta_{n}=\left[  \overline{u},1-k_{n}/n\right]  $.
Under $(FG1)$ and $(FG2)$ we have%
\[
\underset{n\rightarrow+\infty}{\lim\sup}\sup_{u\in\Delta_{n}}\frac{\left\vert
\beta_{n}(u)\right\vert h(u)}{\sqrt{\left(  1-u\right)  \log\log n}}%
\leqslant4\quad\text{a.s.}%
\]
where $\left(  \beta_{n},h\right)  =(\beta_{n}^{X},h_{X})$ or $\left(
\beta_{n},h\right)  =(\beta_{n}^{Y},h_{Y})$.
\end{lemma}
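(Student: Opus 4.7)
\textbf{Proof plan for Lemma \ref{hongrois}.} The strategy is to reduce the quantile empirical process $\beta_n^X$ to the uniform quantile process via the probability integral transform, exploit a classical LIL for the latter on the non-extreme range $\Delta_n$, and finally control the density-quantile ratio that arises from the Taylor expansion by means of $(FG2)$. We only write the argument for $\beta_n^X$; the $Y$ case is identical.

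Using the representation $X_i = F^{-1}(U_i)$ with $U_i$ i.i.d. uniform on $[0,1]$, one has $\mathbb F_n^{-1}(u) = F^{-1}(\mathbb U_n^{-1}(u))$ where $\mathbb U_n^{-1}$ is the uniform empirical quantile. The mean value theorem applied to $F^{-1}$ (smooth by $(FG1)$) gives, for some random $\xi_n(u)$ between $u$ and $\mathbb U_n^{-1}(u)$,
\[
\beta_n^X(u) \, h_X(u) \;=\; \sqrt{n}\bigl(F^{-1}(\mathbb U_n^{-1}(u))-F^{-1}(u)\bigr) h_X(u) \;=\; \gamma_n(u) \, \frac{h_X(u)}{h_X(\xi_n(u))},
\]
where $\gamma_n(u)=\sqrt n(\mathbb U_n^{-1}(u)-u)$ is the uniform quantile process.

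Next, I would invoke a classical uniform LIL for the uniform quantile process of Csörgő--Révész--Mason type, which gives, for $\Delta_n=[\overline u,1-k_n/n]$,
\[
\limsup_{n\to\infty} \sup_{u\in\Delta_n}\frac{|\gamma_n(u)|}{\sqrt{(1-u)\log\log n}} \;\leq\; \sqrt 2 \quad \text{a.s.}
\]
This is applicable since (\ref{knbis}) entails $k_n/\log\log n\to +\infty$, so we stay sufficiently far from the boundary $u=1$. A direct consequence is
\[
\sup_{u\in\Delta_n}\frac{|\xi_n(u)-u|}{1-u} \;\leq\; \sup_{u\in\Delta_n}\frac{|\gamma_n(u)|}{\sqrt{n}(1-u)} \;\leq\; C\sqrt{\frac{\log\log n}{k_n}} \;\longrightarrow\; 0 \quad \text{a.s.},
\]
so in particular $\xi_n(u)\in(\overline u,1)$ eventually, uniformly.

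Then I would exploit $(FG2)$: the boundedness of $(1-u)|(\log h_X)'(u)|$ on $(\overline u,1)$ by some constant $M$ yields, for $u\in\Delta_n$ and $n$ large,
\[
\Bigl|\log\tfrac{h_X(u)}{h_X(\xi_n(u))}\Bigr| \;\leq\; M\,\Bigl|\log\tfrac{1-\xi_n(u)}{1-u}\Bigr| \;\longrightarrow\; 0 \quad \text{a.s.,}
\]
by the previous display. Hence $h_X(u)/h_X(\xi_n(u)) \to 1$ uniformly on $\Delta_n$, and in particular is bounded by $2\sqrt 2 / \sqrt 2 < 4/\sqrt 2$ eventually. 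Combining with the LIL bound on $\gamma_n$ gives the desired $\limsup \leq 4$ a.s.

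The main obstacle is obtaining the sharp uniform LIL for $\gamma_n$ on a range that stretches up to $1-k_n/n$ with $k_n=o(\sqrt n)$; classical statements (Shorack--Wellner, Chapters 16--18) furnish this provided $k_n\to\infty$, which is where the lower bound (\ref{knbis}) is critical. The constant $4$ in the lemma is loose on purpose: it comfortably absorbs both the LIL constant $\sqrt 2$, the density-quantile ratio correction, and a safety margin that simplifies later estimates; an alternative route is to use a KMT-type strong invariance principle approximating $\alpha_n$ by a Brownian bridge and translating via the Bahadur--Kiefer representation, but the above elementary approach suffices under $(FG1)$--$(FG2)$.
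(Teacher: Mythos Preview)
Your approach is essentially the one in the paper: reduce to the uniform quantile process $\gamma_n(u)=\sqrt n(\mathbb U_n^{-1}(u)-u)$, invoke a weighted LIL for $\gamma_n$ on $\Delta_n$, and use $(FG2)$ to show that the density--quantile correction factor tends to $1$ uniformly. The paper differs only cosmetically: it performs a second--order Taylor expansion of $F^{-1}$ and bounds the quadratic remainder $A_nB_n$, whereas you use the mean value theorem and handle the ratio $h_X(u)/h_X(\xi_n(u))$ directly; both routes end with the same $(FG2)$--based estimate $|\log(h_X(u)/h_X(u^\ast))|\leqslant K|\log((1-u^\ast)/(1-u))|\to 0$. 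One point worth aligning: the paper quotes the Cs\"org\H{o}--R\'ev\'esz bound with constant $4$ already, i.e.\ $\limsup_n\sup_{\Delta_n}|\gamma_n(u)|/\sqrt{(1-u)\log\log n}\leqslant 4$, so the final constant $4$ comes straight from the cited LIL rather than from padding a $\sqrt 2$; your claim of $\sqrt 2$ for the weighted LIL on the stretching range $[\overline u,1-k_n/n]$ would need a precise reference, but since you only need $\leqslant 4$ this is harmless.
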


\noindent In the next key lemma we have to carefully check that the conditions
given at Proposition \ref{C3rho} are almost surely met on $I_{n}\subset  \Delta_{n}$.
For $u\in I_{n}$ and $n\geqslant3$ define%
\begin{equation}
\varepsilon_{n}(u)=\varepsilon_{n}^{X}(u)-\varepsilon_{n}^{Y}(u),\quad
\varepsilon_{n}^{X}(u)=\frac{\beta_{n}^{X}(u)}{\sqrt{n}},\quad\varepsilon
_{n}^{Y}(u)=\frac{\beta_{n}^{Y}(u)}{\sqrt{n}}.\label{epsilon}%
\end{equation}

\begin{lemma}\label{lem:truc2}
\label{DL}Assume that $(C2)$, $(FG)$ and $(CFG)$ hold. Then there exists $K_{2}>0$ such that%
\[
\underset{n\rightarrow+\infty}{\lim\sup}\sup_{u\in I_{n}}\frac{\left\vert
c\left(  \F_{n}^{-1}(u),\G_{n}^{-1}(u)\right)  -c\left(  F^{-1}(u),G^{-1}%
(u)\right)  \right\vert }{\rho^{\prime}\circ F^{-1}(u)\text{ }\left\vert
\varepsilon_{n}(u)\right\vert }\leqslant K_{2}\quad a.s.
\]

\end{lemma}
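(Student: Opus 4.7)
The plan is to reduce the lemma to a one-dimensional Taylor-type estimate for $\rho$ along the segment joining $(\F_n^{-1}(u),\G_n^{-1}(u))$ to $(F^{-1}(u),G^{-1}(u))$. By Lemma \ref{hongrois} combined with $(FG3)$, almost surely for $n$ large and uniformly on $u\in I_n$, both empirical quantiles exceed $m$, so under $(C2)$ the cost takes the smooth form $c(x,y)=\rho(x-y)=\exp(l(x-y))$ on the whole segment. Writing $\tau(u)=F^{-1}(u)-G^{-1}(u)$, the one-dimensional mean value theorem gives
\[
c(\F_n^{-1}(u),\G_n^{-1}(u))-c(F^{-1}(u),G^{-1}(u))=\rho'(\xi_n(u))\,\varepsilon_n(u),
\]
for some random $\xi_n(u)$ between $\F_n^{-1}(u)-\G_n^{-1}(u)$ and $\tau(u)$ (using $\varepsilon_n(u)=(\F_n^{-1}(u)-\G_n^{-1}(u))-\tau(u)$ from \eqref{epsilon}). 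The lemma thus reduces to the uniform a.s.\ bound
$\rho'(\xi_n(u))\leqslant K_2\,\rho'\circ F^{-1}(u)$ on $I_n$ for $n$ large.

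\textbf{Localisation of $\xi_n$.} Using the identity $1/h_X(u)=F^{-1}(u)H_X(u)/(1-u)$ together with the boundedness of $H_X$ on $(\bar u,1)$ given by $(FG3)$, Lemma \ref{hongrois} translates into
\[
|\varepsilon_n^X(u)|\leqslant \frac{K\,F^{-1}(u)\sqrt{\log\log n}}{\sqrt{n(1-u)}}\leqslant K\,F^{-1}(u)\sqrt{\frac{\log\log n}{k_n}},\quad u\in I_n,
\]
a.s.\ for $n$ large, and analogously for $|\varepsilon_n^Y(u)|$ (with $G^{-1}(u)\leqslant F^{-1}(u)$ on $I_n$). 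Since $k_n/\log\log n\to+\infty$ by \eqref{knbis}, both are at most $\eta_n F^{-1}(u)$ for a deterministic $\eta_n\to 0$, uniformly on $I_n$. Together with $\tau(u)\geqslant\tau_0>0$ from $(FG4)$, this yields $\xi_n(u)\in\left[\tau_0/2,\,(1+\eta_n)F^{-1}(u)\right]$ for $n$ large.

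\textbf{Comparing $\rho'(\xi_n)$ to $\rho'\circ F^{-1}(u)$: the main obstacle.} On the range $\xi_n(u)\leqslant F^{-1}(u)$ the monotonicity of $\rho'$ (from the convexity of $\rho$ under $(C2)$) gives the desired bound with $K_2=1$. The delicate range is $F^{-1}(u)<\xi_n(u)\leqslant F^{-1}(u)(1+\eta_n)$, where writing $\rho'=l'\rho$ one splits
\[
\frac{\rho'(\xi_n(u))}{\rho'\circ F^{-1}(u)}=\frac{l'(\xi_n(u))}{l'\circ F^{-1}(u)}\cdot\exp\bigl(l(\xi_n(u))-l\circ F^{-1}(u)\bigr).
\]
The first factor is controlled by the regular variation properties of $l'$ encoded in (\ref{L'}) and the membership $l\in RV_2^+(\gamma,\tau_1)$, giving $l'(\xi_n(u))/l'\circ F^{-1}(u)\to 1$ uniformly. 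For the exponential, a further mean value expansion of $l$ together with (\ref{L'}) yields
\[
l(\xi_n(u))-l\circ F^{-1}(u)\leqslant K\,l'\circ F^{-1}(u)\cdot(|\varepsilon_n^X(u)|+|\varepsilon_n^Y(u)|),
\]
so the key technical point is to establish the uniform bound $l'\circ F^{-1}(u)\cdot|\varepsilon_n(u)|=O(1)$ on $I_n$. This is the main obstacle, requiring a careful interplay between the LIL rate $\sqrt{(1-u)\log\log n/n}/h_X(u)$ of Lemma \ref{hongrois}, the upper bound $l\circ F^{-1}(u)\leqslant \tfrac{1}{2}\log(1/(1-u))-\theta\log\log(1/(1-u))+K$ coming from \eqref{eq:psil} under $(CFG)$ with $\theta>1+\theta_1$, and the explicit growth rates prescribed for $k_n$ and $K_n$ at \eqref{kn}--\eqref{Kn}. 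Once this balance is verified the exponential factor is $O(1)$ and, together with the first factor, yields the claim with some explicit $K_2$.
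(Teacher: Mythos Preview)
Your reduction via the mean value theorem is the right instinct, and the subsequent control of $\rho'((1+\eta_n)F^{-1}(u))/\rho'\circ F^{-1}(u)$ is essentially what the paper does in its Case~1 ($\gamma=0$). However, your localisation step contains a genuine gap: the inequality $\xi_n(u)\geqslant\tau_0/2$ does \emph{not} follow from what you have shown. You establish $|\varepsilon_n(u)|\leqslant\eta_n F^{-1}(u)$ with $\eta_n\to 0$, i.e.\ the \emph{ratio} $|\varepsilon_n(u)|/F^{-1}(u)$ vanishes uniformly on $I_n$, but $|\varepsilon_n(u)|$ itself can blow up at the right end of $I_n$ (for instance in a translation model where $\tau(u)\equiv\tau_0$ while $F^{-1}(u)\to+\infty$, or for Pareto tails where $1/h_X(1-k_n/n)$ grows like a power of $n$). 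Hence $\tau(u)+\varepsilon_n(u)$ may fall below $\tau_1$ or become negative, so neither the identity $c=\rho(x-y)$ nor the one-dimensional MVT for $\rho$ on $(\tau_1,\infty)$ is available on the whole segment; in particular $c(\F_n^{-1},\G_n^{-1})=\rho(|\tau+\varepsilon_n|)$, not $\rho(\tau+\varepsilon_n)$, and the difference is no longer $\rho'(\xi_n)\varepsilon_n$ when $\tau+\varepsilon_n<0$.

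The paper avoids this obstacle by arguing separately according to the value of $\gamma$. For $\gamma=0$ it replaces your MVT by the convexity inequality $|\rho(|\tau+\varepsilon|)-\rho(\tau)|\leqslant\rho'(\tau+|\varepsilon|)\,|\varepsilon|$, which is valid regardless of the sign of $\tau+\varepsilon$, and then bounds $\rho'(\tau+|\varepsilon|)\leqslant\rho'((1+\eta_n)F^{-1}(u))$ exactly as you propose. For $\gamma>1$ the derivative $l'$ is increasing, so one can actually prove $\sup_{I_n}l'\circ\tau(u)\,|\varepsilon_n(u)|\to 0$ (this is where \eqref{eq:psil} and the choice $\theta>2$ in $(CFG)$ are used), and then invoke Proposition~\ref{C3rho}, which packages the expansion $\rho(|\tau+\varepsilon|)-\rho(\tau)=k_0\rho'(\tau)\varepsilon$ under the sole hypothesis $|\varepsilon|l'(\tau)\leqslant\delta_0$. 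The intermediate range $0<\gamma\leqslant 1$ is the most delicate: $l'$ is no longer monotone in the right direction, so the paper splits $I_n$ into a ``good'' part where Proposition~\ref{C3rho} applies and a residual set $I_n^-=\{u:|\varepsilon_n(u)|>\delta_0/l'\circ\tau(u)\}$ on which a direct bound $|\rho(|\tau+\varepsilon|)-\rho(\tau)|\leqslant\rho(\tau+|\varepsilon|)$ combined with a size estimate on $x_n(u)=2\xi_1 l\circ\tau(u)|\varepsilon_n(u)|\leqslant F^{-1}(u)$ finishes the job. Your single MVT argument collapses these three cases but, as written, it does so at the cost of an unjustified lower bound on $\xi_n$.
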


\begin{proof} {\bf of Lemma \ref{lem:T}}\\
Remind notation from (\ref{kn}), (\ref{In}) and (\ref{epsilon}). By Lemma
\ref{DL} it holds, with probability one, for all $n$ large enough%
\begin{align*}
\left\vert T_{n}\right\vert   %
 \leqslant K%
{\displaystyle\int\nolimits_{1-h_{n}/n}^{1-k_{n}/n}}
\rho^{\prime}\circ F^{-1}(u)\left\vert \varepsilon_{n}(u)\right\vert du.
\end{align*}
We  proceed as in the proof of Lemma
\ref{variance} where similar integrable functions show up, however they have
now to be integrated to sharply evaluate $\sqrt{n}\left\vert T_{n}\right\vert
$. From Lemma \ref{hongrois} it follows, with probability one, that for all
$n$ large and all $u\in I_{n}\subset\Delta_{n}$,%
\begin{equation}\label{eq:eps}
\left\vert \varepsilon_{n}(u)\right\vert \leqslant\left\vert \frac{\beta
_{n}^{X}(u)}{\sqrt{n}}\right\vert +\left\vert \frac{\beta_{n}^{Y}(u)}{\sqrt
{n}}\right\vert \leqslant 5\sqrt{\frac{\log\log n}{n}}\left(  \frac
{\sqrt{1-u}}{h_{X}(u)}+\frac{\sqrt{1-u}}{h_{Y}(u)}\right)  .
\end{equation}
We then compute separately the following two integrals%
\[
\sqrt{n}\left\vert T_{n}\right\vert \leqslant5K\sqrt{\log\log
n}\left(
{\displaystyle\int\nolimits_{1-h_{n}/n}^{1-k_{n}/n}}
t_{X}(u)du+%
{\displaystyle\int\nolimits_{1-h_{n}/n}^{1-k_{n}/n}}
t_{Y}(u)du\right)
\]
where, for $Z=X,Y$ we write $\displaystyle
t_{Z}(u)=\rho^{\prime}\circ F^{-1}(u)\frac{\sqrt{1-u}}{h_{Z}(u)}
$.\\
\textbf{First integral.} Since $\rho$ is convex by Proposition \ref{convex} we
can use (\ref{Frho}) as in the proof of Lemma \ref{variance} to justify the
change of variable $u=F\circ\rho^{-1}(x)$ then apply (\ref{psy}) to $\rho
^{-1}(x)=l^{-1}(\log x)$ and rewrite the first integral as%
\begin{align*}%
{\displaystyle\int\nolimits_{1-h_{n}/n}^{1-k_{n}/n}}
t_{X}(u)du  & =%
{\displaystyle\int\nolimits_{1-h_{n}/n}^{1-k_{n}/n}}
\frac{\sqrt{1-u}}{h_{\rho(X)}(u)}du\\
& =%
{\displaystyle\int\nolimits_{b(n/h_{n})}^{b(n/k_{n})}}
\sqrt{\mathbb{P}\left(  \rho(X)>x\right)  }dx\\
& =%
{\displaystyle\int\nolimits_{b(n/h_{n})}^{b(n/k_{n})}}
\exp\left(  -\frac{1}{2}\psi_{X}\circ l^{-1}(\log x)\right)  dx
\end{align*}
where, by $(CFG)$ reformulated into (\ref{eq:psil}),%
\begin{equation}
b(x)=\rho\circ F^{-1}\left(  1-\frac{1}{x}\right)  =\exp\left(  l\circ\psi
_{X}^{-1}(\log x)\right)  \leqslant\frac{K\sqrt{x}}{(\log x)^{\theta}%
}.\label{bx}%
\end{equation}
Equation (\ref{hrhoX}) justifies that $t_{X}$ is integrable since
$\theta>1$ and, by (\ref{eq:lpsi}),%
\[
\exp\left(  -\frac{1}{2}\psi_{X}\circ l^{-1}(\log x)\right)  \leqslant\frac
{K}{x(\log x)^{\theta}}.
\]
Now observe that $\varphi=\psi_{X}\circ l^{-1}$ satisfies $\varphi^{\prime
}=\left(  \psi_{X}^{\prime}/l^{\prime}\right)  \circ l^{-1}$ and $(CFG)$
reads%
\[
\varphi^{\prime}(x)\geqslant2+\frac{2\theta}{x},\quad x>l(\tau_1),
\]
so that we have, for all $x>b(n/h_{n})>l(\tau_1)$,%
\begin{align*}
\left(  -x\exp\left(  -\frac{1}{2}\varphi(\log x)\right)  \right)  ^{\prime}
& =\left(  \frac{1}{2}\varphi^{\prime}(\log x)-1\right)  \exp\left(  -\frac
{1}{2}\varphi(\log x)\right) \\
& \geqslant\frac{\theta}{\log x}\exp\left(  -\frac{1}{2}\varphi(\log
x)\right)  .
\end{align*}
Therefore it holds, thanks to the upper bound (\ref{bx}) and since $b(x)$ is
increasing,%
\begin{align*}%
{\displaystyle\int\nolimits_{1-h_{n}/n}^{1-k_{n}/n}}
t_{X}(u)du  & \leqslant\frac{\log b(n/k_{n})}{\theta}%
{\displaystyle\int\nolimits_{b(n/h_{n})}^{b(n/k_{n})}}
\frac{\theta}{\log x}\exp\left(  -\frac{1}{2}\psi_{X}\circ l^{-1}(\log
x)\right)  dx\\
& \leqslant\frac{\log b(n)}{\theta}\left[  -x\exp\left(  -\frac{1}{2}\psi
_{X}\circ l^{-1}(\log x)\right)  \right]  _{b(n/h_{n})}^{b(n/k_{n})}\\
& \leqslant\frac{K\log n}{\theta}\frac{b(n/h_{n})}{\sqrt{n/h_{n}}}\\
& =\frac{K}{\theta(1-\beta)^{\theta}(\log n)^{\theta-1}}%
\end{align*}
since $h_n=n^\beta$. This proves that
\[
\underset{n\rightarrow+\infty}{\lim}\sqrt{\log\log n}%
{\displaystyle\int\nolimits_{1-h_{n}/n}^{1-k_{n}/n}}
t_{X}(u)du=0.
\]
\noindent\textbf{Second integral.} Next consider%
\[
J_{n}=%
{\displaystyle\int\nolimits_{1-h_{n}/n}^{1-k_{n}/n}}
t_{Y}(u)du=%
{\displaystyle\int\nolimits_{1-h_{n}/n}^{1-k_{n}/n}}
l^{\prime}\circ F^{-1}(u)\frac{\sqrt{1-u}}{h_{Y}(u)}\rho\circ F^{-1}(u)du.
\]
By (\ref{phiprime}) and (\ref{L'}), under $(C2)$ we have $l^{\prime
}(x)=\varepsilon_{1}(x)l(x)/x$ with $\varepsilon_{1}(x)\rightarrow\gamma$ as
$x\rightarrow+\infty$. If $\gamma=0$ the rate of $\varepsilon_{1}(x)$ is given
by (\ref{theta1}) and we pick $\theta_{1}^{\prime}\in(\theta_{1},\theta-1)$.
If $\gamma>0$ let $\theta_{1}^{\prime}=1$. Recall that $\phi^{-1}=F^{-1}\circ
G=\psi_{X}^{-1}\circ\psi_{Y}$. Start with%
\begin{align*}
J_{n}  & \leqslant (1+\gamma)%
{\displaystyle\int\nolimits_{1-h_{n}/n}^{1-k_{n}/n}}
\frac{(l\circ F^{-1}(u))^{\theta_{1}^{\prime}}}{F^{-1}(u)}\frac{\sqrt{1-u}%
}{h_{Y}(u)}\rho\circ F^{-1}(u)du\\
& =(1+\gamma)%
{\displaystyle\int\nolimits_{G^{-1}(1-h_{n}/n)}^{G^{-1}(1-k_{n}/n)}}
\frac{(l\circ\phi^{-1}(x))^{\theta_{1}^{\prime}}}{\phi^{-1}(x)}\exp\left(
l\circ\phi^{-1}(x)\right)  \sqrt{\mathbb{P}\left(  Y>x\right)  }dx.
\end{align*}
Observe that $(CFG)$ and  (\ref{eq:psil}) imply%
\[
l\circ\phi^{-1}(x)=l\circ\psi_{X}^{-1}\circ\psi_{Y}\left(  x\right)
\leqslant\frac{\psi_{Y}\left(  x\right)  }{2}-\theta\log\psi_{Y}\left(
x\right) +K .
\]
Since $\psi_{X}^{-1}\circ\psi_{Y}(x)\geqslant x$ by $(FG4)$ and $\psi
_{Y}^{\prime}(x)\geqslant K/x$ by $(FG5)$ it readily
follows, for $\theta-\theta_{1}^{\prime}>1$ and $K>0$,%
\begin{align*}
J_{n}  & \leqslant (1+\gamma)%
{\displaystyle\int\nolimits_{G^{-1}(1-h_{n}/n)}^{G^{-1}(1-k_{n}/n)}}
\frac{\left(  \psi_{Y}(x)\right)  ^{\theta_{1}^{\prime}-\theta}}{\psi_{X}%
^{-1}\circ\psi_{Y}(x)}dx\\
& \leqslant K%
{\displaystyle\int\nolimits_{G^{-1}(1-h_{n}/n)}^{G^{-1}(1-k_{n}/n)}}
\frac{\psi_{Y}^{\prime}(x)}{\left(  \psi_{Y}(x)\right)  ^{\theta-\theta
_{1}^{\prime}}}dx\\
& =K\left[  \frac{-1}{\left(  \psi_{Y}(x)\right)  ^{\theta-\theta
_{1}^{\prime}-1}}\right]  _{\psi_{Y}^{-1}(\log(n/h_{n}))}^{\psi_{Y}^{-1}%
(\log(n/k_{n}))}\\
& \leqslant\frac{K}{\left(  (1-\beta)\log n\right)  ^{\theta-\theta
_{1}^{\prime}-1}}%
\end{align*}
therefore%
\[
\underset{n\rightarrow+\infty}{\lim}\sqrt{\log\log n}%
{\displaystyle\int\nolimits_{1-h_{n}/n}^{1-k_{n}/n}}
t_{Y}(u)du=0.
\]
As a conclusion, the almost sure upper bound of $\sqrt{n}\left\vert
T_{n}\right\vert $ tends to zero.
\end{proof}

\noindent\begin{bf}Step 3: Upper middle order quantiles\end{bf}\\

At (\ref{In}) we have defined $h_{n}=n^{\beta}$ with $\beta\in\left(
1/2,1\right)  $ to be chosen. Let us introduce%
\begin{equation}
\label{Imn}
I_{M,n}=\left(  F(M),1-\frac{h_{n}}{n}\right)  ,\quad M>m\text{.}%
\end{equation}
Since $F(M)>F(m)=\overline{u}$ and  (\ref{ctorho}) in Section \ref{sectruc2} holds we have by $(C2)$%
\begin{align*}
U_{M,n}  & =%
{\displaystyle\int\nolimits_{F(M)}^{1-h_{n}/n}}
\left(  c\left( \mathbb F_{n}^{-1}(u),\mathbb G_{n}^{-1}(u)\right)  -c\left(  F^{-1}%
(u),G^{-1}(u)\right)  \right)  du\\
& =%
{\displaystyle\int\nolimits_{F(M)}^{1-h_{n}/n}}
\rho\left(  \left\vert \tau(u)+\varepsilon_{n}(u)\right\vert \right)
-\rho\left(  \tau(u)\right)  du
\end{align*}
where $\varepsilon_{n}(u)$ is as in (\ref{epsilon}). 
In order to control the last integral, we expand $\rho$ and
make use of a distribution free Brownian approximation of the joint quantile processes. 
\begin{lemma}
\label{cvUMn}Assume $(C2)$, $(FG)$ and $(CFG)$. For any $\varepsilon
>0$ and  $\lambda>0$ we can find $M>m$ such that, for all $n$ large enough,%
\[
\mathbb{P}\left(  \sqrt{n}\left\vert U_{M,n}\right\vert >\lambda\right)
<\varepsilon.
\]

\end{lemma}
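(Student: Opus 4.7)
The plan is to Taylor expand $\rho(|\tau(u)+\varepsilon_n(u)|)$ around $\tau(u)$ on $I_{M,n}$, splitting $U_{M,n}$ into a linear main term and a quadratic remainder. I would show that the remainder contributes $o_P(n^{-1/2})$ for each fixed $M$, while the main term has variance that can be driven below $\lambda^2\varepsilon/4$ by taking $M$ large enough, after which Chebyshev's inequality concludes.

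First, since $(FG4)$ gives $\tau(u)\geqslant \tau_0>0$ on $I_{M,n}$, and Lemma \ref{hongrois} yields $\sup_{u\in I_{M,n}}|\varepsilon_n(u)|\to 0$ almost surely (the same chain of bounds used in the proof of Lemma \ref{cvTn} shows this, once $(CFG)$ is combined with $h_n=n^\beta$, $\beta\in(1/2,1)$), on the event $E_n=\{\sup|\varepsilon_n|\leqslant \tau_0/2\}$ one has $\tau(u)+\varepsilon_n(u)\geqslant \tau_0/2>0$ and may drop the absolute value. Second-order Taylor then gives
$$\rho(\tau(u)+\varepsilon_n(u))-\rho(\tau(u))=\rho'(\tau(u))\varepsilon_n(u)+R_n(u),\qquad |R_n(u)|\leqslant \tfrac12 M_\rho(u)\,\varepsilon_n(u)^2,$$
with $M_\rho(u)=\sup_{\xi\in[\tau_0/2,2\tau(u)]}|\rho''(\xi)|$. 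Under $(C2)$, $|\rho''(\xi)|\leqslant K\rho'(\xi)l(\xi)/\xi$, so $M_\rho(u)$ is controlled by $\rho'(\tau(u))l(\tau(u))/\tau(u)$.

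For the remainder, plugging the pointwise LIL bound of Lemma \ref{hongrois} into $\sqrt{n}\int_{I_{M,n}}|R_n(u)|du$ yields a bound of order $(\log\log n)/\sqrt{n}$ times an integral of the same form as the second integral in the proof of Lemma \ref{cvTn}, whose finiteness follows from $(CFG)$ via \eqref{eq:psil}--\eqref{hrhoX}. Hence this contribution is $o_P(1)$. For the main linear term
$$L_{M,n}=\int_{F(M)}^{1-h_n/n}\rho'(\tau(u))\bigl(\beta_n^X(u)-\beta_n^Y(u)\bigr)\,du,$$
I would invoke the distribution-free Brownian (KMT / Csörgő-Révész) approximation of the joint quantile process on $\Delta_n$ announced for Step 4 of Section \ref{sec:weak}. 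This gives
$$\Var(L_{M,n})\leqslant C\int_{F(M)}^{1}\!\!\int_{F(M)}^{1}\rho'(\tau(u))\rho'(\tau(v))\,\|\Sigma(u,v)\|\,du\,dv+o(1),$$
which is finite by Lemma \ref{variance} and tends to $0$ as $M\to+\infty$ by dominated convergence (since $\sigma^2(\overline u)<+\infty$ and $F(M)\to 1$). Choosing $M$ so that this right-hand side is $\leqslant \lambda^2\varepsilon/4$, Chebyshev gives $\mathbb{P}(|L_{M,n}|>\lambda)\leqslant \varepsilon/2$, and combining with $\mathbb{P}(E_n^c)\to 0$ and the remainder estimate finishes the proof.

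The main obstacle is making the Brownian approximation usable up to the edge $u=1-h_n/n$ in the weighted norm that arises from the density quantiles $h_X,h_Y$. One must use a weighted KMT approximation with weight $(u(1-u))^{1/2-\nu}$ and verify that the resulting weighted error remains integrable against $\rho'(\tau(u))$. This integrability is exactly \eqref{hrhoX} provided by $(CFG)$, which is why the hypothesis must carefully link the growth of the cost to the tail decay of the laws.
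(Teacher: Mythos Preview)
Your overall strategy---linearize, control the remainder, then bound the linear term by a Gaussian variance that vanishes as $M\to\infty$---matches the paper's. But the remainder step as written has a genuine gap when $\gamma>1$ in $(C2)$.

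You work on the event $E_n=\{\sup_{u}|\varepsilon_n(u)|\leqslant\tau_0/2\}$ and bound the Lagrange remainder by $M_\rho(u)=\sup_{\xi}|\rho''(\xi)|$ over an interval of the form $[\tau(u)-\tau_0/2,\tau(u)+\tau_0/2]$. You then assert that $M_\rho(u)$ is controlled by $\rho'(\tau(u))l(\tau(u))/\tau(u)$. This fails for over-exponential costs: if $l(x)=x^\gamma$ with $\gamma>1$, then
\[
\frac{\rho''(\tau(u)+\tau_0/2)}{\rho''(\tau(u))}\asymp\exp\bigl(l(\tau(u)+\tau_0/2)-l(\tau(u))\bigr)\asymp\exp\bigl(\gamma(\tau_0/2)\,\tau(u)^{\gamma-1}\bigr)\to+\infty,
\]
so a fixed additive window $\tau_0/2$ is far too wide. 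The paper resolves this by first proving the sharper smallness condition $\sup_{u\in I_{M,n}}|\varepsilon_n(u)|\,l'\!\circ\tau(u)\to 0$ almost surely (part~1 of the proof) and then invoking Proposition~\ref{C3rho}, which gives the exact multiplicative expansion $\rho(\tau+\varepsilon)-\rho(\tau)=k_0(\tau,\varepsilon)\rho'(\tau)\varepsilon$ with $k_0\to 1$ uniformly precisely under that adapted condition. There is then no quadratic remainder at all; the error becomes $(k_1(u)-1)\rho'\circ\tau(u)\,\sqrt n\,\varepsilon_n(u)$, which is handled by the same integrability as the linear term. Related to this, your sentence that the remainder integral is ``of the same form as the second integral in the proof of Lemma~\ref{cvTn}'' is not right: that integral is linear in $\sqrt{1-u}/h(u)$, whereas a quadratic remainder produces $(1-u)/h(u)^2$, a different object.

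For the linear term you are close to the paper, but the mechanism is simpler than a weighted KMT. On the probability space of Theorem~\ref{approx} one writes $\sqrt n\,U_{M,n}=N_{M,n}+R_{M,n}+S_{M,n}$, where $N_{M,n}$ is the genuine Gaussian integral against $(B_n^X/h_X+B_n^Y/h_Y)$, $R_{M,n}$ carries the unweighted coupling error $Z_n=O(n^{-\xi})$, and $S_{M,n}$ carries the factor $k_1(u)-1$. The integral $\int_{I_{M,n}}\rho'\circ\tau(u)/h(u)\,du$ only grows like $\sqrt{n/h_n}$, so choosing $\beta\in(1-\xi,1)$ kills $R_{M,n}$; no weighted approximation is needed. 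The variance of $N_{M,n}$ is exactly $\sigma^2(M,n)\to\sigma^2(F(M))$ from Lemma~\ref{variance}, which tends to $0$ as $M\to\infty$, and Chebyshev concludes as you indicate.
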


\begin{proof}
\begin{enumerate}
\item Under $(C2)$ we have $l^{\prime}(x)=\varepsilon_{1}(x)l(x)/x$
where $\varepsilon_{1}(x)\rightarrow\gamma$ as $x\rightarrow+\infty$ thus
$\varepsilon_{1}$ is bounded on $\left(  M,+\infty\right)  $. Moreover,
$(CFG)$ ensures that
\[
l\circ\psi_{Y}^{-1}(x)\leqslant l\circ\psi_{X}^{-1}(x)<x
\]
whereas (\ref{CFG1ab}) and (\ref{L1}) entails that $\psi_{X}(x)>2l(x)\geqslant
2\log x$ thus
\[
F^{-1}(u)=\psi_{X}^{-1}\left(  \log\left(  \frac{1}{1-u}\right)  \right)
<\frac{1}{\sqrt{1-u}}%
\]
for all $u\in I_{M,n}$ and $x\in F(I_{M,n})$. Under $(FG4)$ we have
$\tau(u)=F^{-1}(u)-G^{-1}(u)\geqslant\tau_{0}$ for $u\in I_{M,n}$. Hence
by choosing $M>m$ and $K>0$ sufficiently large, (\ref{eq:eps}) and
$(FG3)$ imply that it almost surely eventually holds%
\begin{align*}
& \sup_{u\in I_{M,n}}  \varepsilon
_{1}\circ\tau(u)\frac{l\circ\tau(u)}{\tau(u)}\left\vert\varepsilon_{n}(u)\right\vert\\
& \leqslant K\sup_{u\in I_{M,n}}l\circ F^{-1}%
(u)(H_{X}(u)F^{-1}(u)+H_{Y}(u)G^{-1}(u))\sqrt{\frac{\log\log n}{n(1-u)}}\\
& \leqslant K\sqrt{\log\log n}\sup_{u\in I_{M,n}}\frac{l\circ\psi_{X}%
^{-1}(\log(1/(1-u))}{\sqrt{n(1-u)}}F^{-1}(u)\\
& \leqslant K\sqrt{n\log\log n}\sup_{u\in I_{M,n}}\frac{\log(1/(1-u)}%
{n(1-u)}\\
& \leqslant K\frac{\log n}{h_{n}}\sqrt{n\log\log n}%
\end{align*}
which vanishes since $\beta>1/2$ in (\ref{In}). We have shown that%
\begin{equation}
\underset{n\rightarrow+\infty}{\lim}\sup_{u\in I_{M,n}}\left\vert
\varepsilon_{n}(u)\right\vert l^{\prime}\circ\tau(u)=0\quad a.s.\label{DLuIMn}%
\end{equation}
\item  By  (\ref{DLuIMn}), the second part of Proposition
\ref{C3rho} can be applied for all large $n$. It says that
\[
\rho\left(  \left\vert \tau(u)+\varepsilon_{n}(u)\right\vert \right)
-\rho\left(  \tau(u)\right)  =k_{0}(\tau(u),\varepsilon_{n}(u))\rho^{\prime
}\circ\tau(u)\varepsilon_{n}(u)
\]
where, by (\ref{DLexplicit}),%
\[
\lim_{\delta_{0}\rightarrow0}\sup_{\tau(u)>\tau_{0}}\sup_{\left\vert
\varepsilon_{n}(u)\right\vert l^{\prime}\circ\tau(u)\leqslant\delta_{0}%
}\left\vert k_{0}(\tau(u),\varepsilon_{n}(u))-1\right\vert =0
\]
which can be reformulated through (\ref{DLuIMn}) into $k_{1}(u)=k_{0}%
(\tau(u),\varepsilon_{n}(u))$ and%
\begin{equation}
\underset{n\rightarrow+\infty}{\lim}\sup_{u\in I_{M,n}}\left\vert
k_{1}(u)-1\right\vert =0\quad a.s.\label{DLk1}%
\end{equation}
Thus, given any $\vartheta\in\left(  0,1\right)  $ the random function
$k_{1}(u)$ is such that $k_{1}(u)\in\left(  1-\vartheta,1+\vartheta\right)  $
for all $u\in I_{M,n}$ and%
\[
\sqrt{n}U_{M,n}=%
{\displaystyle\int\nolimits_{F(M)}^{1-h_{n}/n}}
k_{1}(u)\rho^{\prime}\circ\tau(u)\left(  \beta_{n}^{X}(u)+\beta_{n}%
^{Y}(u)\right)  du.
\]
From now on we work on the  probability space  of Theorem \ref{approx}.  This allows us to write%
\[
\sqrt{n}U_{M,n}=%
{\displaystyle\int\nolimits_{F(M)}^{1-h_{n}/n}}
k_{1}(u)\rho^{\prime}\circ\tau(u)\left(  \frac{B_{n}^{X}(u)+Z_{n}%
^{X}(u)}{h_{X}(u)}+\frac{B_{n}^{Y}(u)+Z_{n}^{Y}(u)}{h_{Y}(u)}\right)  du
\]
where $\left(  U_{M,n},B_{n}^{X},Z_{n}^{X},B_{n}^{Y},Z_{n}^{Y}%
,k_{1}\right)  $ are built together on $\Omega^{\ast}$ in such a way
that  for some small $\xi>0$ independent
of the law $\Pi$,%
\begin{equation}
\underset{n\rightarrow+\infty}{\lim}n^{\xi}\sup_{u\in I_{M,n}}\left\vert
Z_{n}^{X}(u)\right\vert =\underset{n\rightarrow+\infty}{\lim}n^{\xi}\sup_{u\in
I_{M,n}}\left\vert Z_{n}^{Y}(u)\right\vert =0\quad a.s.\label{Z}%
\end{equation} and  $B_{n}^{X}, B_{n}^{Y}$ are Brownian briges define at (\ref{ponts}).  Therefore  $k_{1}$ obeys (\ref{DLk1}).

Let set $\sqrt{n}\,U_{M,n}=N_{M,n}+R_{M,n}+S_{M,n}$
with 
\begin{align*}
N_{M,n}  & =%
{\displaystyle\int\nolimits_{F(M)}^{1-h_{n}/n}}
\rho^{\prime}\circ\tau(u)\left(  \frac{B_{n}^{X}(u)}{h_{X}(u)}+\frac{B_{n}%
^{Y}(u)}{h_{Y}(u)}\right)  du\\
R_{M,n}  & =%
{\displaystyle\int\nolimits_{F(M)}^{1-h_{n}/n}}
k_{1}(u)\rho^{\prime}\circ\tau(u)\left(  \frac{Z_{n}^{X}(u)}{h_{X}(u)}+\frac{Z_{n}%
^{Y}(u)}{h_{Y}(u)}\right)  du\\
S_{M,n}  & =%
{\displaystyle\int\nolimits_{F(M)}^{1-h_{n}/n}}
(k_{1}(u)-1)\rho^{\prime}\circ\tau(u)\left(  \frac{B_{n}^{X}(u)}{h_{X}(u)}+\frac{B_{n}%
^{Y}(u)}{h_{Y}(u)}\right)  du
\end{align*}

\item We first deal with $R_{M,n}$. Since $\rho^{\prime}(x)$ is increasing by Proposition
\ref{convex}, $(C2)$ implies $l^{\prime}(x)<Kl(x)/x$ with $K>\gamma$ and $(CFG)$ entails $l\circ\psi_{Y}^{-1}(x)\leqslant l\circ\psi_{X}^{-1}(x)\leqslant x/2-\theta\log
x$ by (\ref{eq:lpsi}) we readily have%
\begin{align*}
&
\left\vert{\displaystyle\int\nolimits_{F(M)}^{1-h_{n}/n}}
\frac{\rho^{\prime}\circ\tau(u)}{h_{X}(u)}Z_n^X(u)du\right\vert\\
& \leqslant \frac{K}{n^\xi}%
{\displaystyle\int\nolimits_{F(M)}^{1-h_{n}/n}}
\frac{l\circ\psi_{X}^{-1}(\log(1/(1-u))}{F^{-1}(u)h_{X}(u)}\exp\left(
l\circ\psi_{X}^{-1}(\log(1/(1-u))\right)  du\\
& \leqslant \frac{K}{n^\xi}%
{\displaystyle\int\nolimits_{F(M)}^{1-h_{n}/n}}
\frac{\log(1/(1-u))}{(\log(1/(1-u)))^{\theta}}\frac{H_{X}(u)}{(1-u)^{3/2}}du
\end{align*}
which is, by using $(FG3)$ and $\theta>1$ then choosing $\beta\in\left(
1-\xi,1\right)  $, less than%
\begin{equation*}
\frac{K}{n^\xi}%
{\displaystyle\int\nolimits_{F(M)}^{1-h_{n}/n}}
\frac{1}{(1-u)^{3/2}}du<Kn^{-\xi
/2}.\label{integraleZ}%
\end{equation*}
The same bound  holds for $h_{Y}$ since $F^{-1}>G^{-1}$ and%
\[%
\left\vert{\displaystyle\int\nolimits_{F(M)}^{1-h_{n}/n}}
\frac{\rho^{\prime}\circ\tau(u)}{h_{Y}(u)}Z_n^Y(u)du\right\vert\leqslant
\frac{K}{n^\xi}%
{\displaystyle\int\nolimits_{F(M)}^{1-h_{n}/n}}
\frac{G^{-1}(u)}{F^{-1}(u)}\frac{\log(1/(1-u))}{(\log(1/(1-u)))^{\theta}}%
\frac{H_{Y}(u)}{(1-u)^{3/2}}du.
\]
\smallskip
By (\ref{DLk1}), (\ref{Z}) and  the above bounds we have almost surely for $n$ large enough 
$\displaystyle\left\vert R_{M,n}\right\vert  \leqslant 2Kn^{-\xi/2}
\longrightarrow0$.
\item  As $N_{M,n}$ is the sum of two linear
functionals of Brownian bridges it is a mean zero Gaussian random variable
with variance%
\[
\sigma^{2}(M,n)=%
{\displaystyle\int\nolimits_{F(M)}^{1-h_{n}/n}}
{\displaystyle\int\nolimits_{F(M)}^{1-h_{n}/n}}
\rho^{\prime}\circ\tau(u)\rho^{\prime}\circ\tau(v)\Xi(u,v)dudv
\]
 where%
\begin{align*}
\Xi(u,v)  & =cov\left(  \frac{B_{n}^{X}(u)}{h_{X}(u)}+\frac{B_{n}^{Y}%
(u)}{h_{Y}(u)},\frac{B_{n}^{X}(v)}{h_{X}(v)}+\frac{B_{n}^{Y}(v)}{h_{Y}%
(v)}\right) \\
& =\frac{\min(u,v)-uv}{h_{X}(u)h_{X}(v)}+\frac{\Pi(v,u)-uv}{h_{X}(v)h_{Y}%
(u)}+\frac{\Pi(u,v)-uv}{h_{X}(u)h_{Y}(v)}+\frac{\min(u,v)-uv}{h_{Y}%
(v)h_{Y}(u)}.
\end{align*}
Therefore by Lemma \ref{variance} taken in $\overline{u}=F(M)$ we see that $\sigma^{2}(M,n)\rightarrow
\sigma^{2}(M)$ as $n\rightarrow
\infty$ and $\sigma^{2}(M)\rightarrow0$ as $M\rightarrow+\infty$.  On an other hand the random variable 
${\displaystyle\int\nolimits_{F(M)}^{1-h_{n}/n}}
\rho^{\prime}\circ\tau(u)\left(  \frac{B_{n}^{X}(u)}{h_{X}(u)}+\frac{B_{n}%
^{Y}(u)}{h_{Y}(u)}\right)  du$ is $a.s.$ finite. Thus ${\displaystyle\int\nolimits_{F(M)}^{1-h_{n}/n}}
\rho^{\prime}\circ\tau(u)\left\vert  \frac{B_{n}^{X}(u)}{h_{X}(u)}+\frac{B_{n}%
^{Y}(u)}{h_{Y}(u)}\right\vert  du$ is $a.s.$ finite. Hence
 $$\vert S_{M,n}\vert\leqslant\sup_{u\in I_{M,n}}\left\vert
k_{1}(u)-1\right\vert   {\displaystyle\int\nolimits_{F(M)}^{1-h_{n}/n}}
\rho^{\prime}\circ\tau(u)\left\vert  \frac{B_{n}^{X}(u)}{h_{X}(u)}+\frac{B_{n}%
^{Y}(u)}{h_{Y}(u)}\right\vert  du,$$
which $a.s.$ tend to $0$ when $n\to \infty$.\\
As a conclusion, for any $\varepsilon>0$ 
and $\lambda>0$ we can find $M=M(\varepsilon,\lambda)>m$ such that%
\begin{align*}
\mathbb{P}\left(  \sqrt{n}\left\vert U_{M,n}\right\vert >\lambda\right)
&\leqslant\mathbb{P}\left(  \left\vert N_{M,n}\right\vert
>\frac{\lambda}{3}\right)  +\mathbb{P}\left(  \left\vert R_{M,n}\right\vert >\frac{\lambda}{3}\right) +\mathbb{P}\left(  \left\vert S_{M,n}\right\vert >\frac{\lambda}{3}\right)\\
& \leqslant\frac{\sigma^{2}(M,n)}%
{(\lambda/3)^{2}}+\frac{\varepsilon}{3}+ \frac{\varepsilon}{3}<   \varepsilon,
\end{align*}
for all $n>n(\varepsilon,\lambda,M)$.
\end{enumerate}
\end{proof}

\noindent\begin{bf}Step 4: Centered middle order quantiles\end{bf}\\

 Define
\[
I_M=\left(  F(-M),F(M)\right)  ,\quad M>m,
\]
and consider the centered random integral%
\[
\M_{M,n}=%
{\displaystyle\int\nolimits_{I_M}}
\left(  c\left(  \F_{n}^{-1}(u),\G_{n}^{-1}(u)\right)  -c\left(  F^{-1}%
(u),G^{-1}(u)\right)  \right)  du.
\]
In order to conclude the proof of Theorem \ref{mainth} it remains to exploit
the Brownian approximation of the joint quantile processes $\beta_{n}^{X}$ and
$\beta_{n}^{Y}$ defined at (\ref{betan}) to accurately approximate $\sqrt
{n}\M_{M,n}$. Recalling (\ref{deltau}) let us write%
\[
\nabla_{x}(u)=\frac{\partial}{\partial x}c(F^{-1}(u),G^{-1}(u)),\quad
\nabla_{y}(u)=\frac{\partial}{\partial y}c(F^{-1}(u),G^{-1}(u))
\]
and
\[
\sqrt{n}\mathbb N_{M,n}=%
{\displaystyle\int\nolimits_{I_M}}
\left(  \nabla_{x}(u)\beta_{n}^{X}(u)+\nabla_{y}(u)\beta_{n}^{Y}(u)\right)
du.
\]

\begin{lemma}
\label{MnNn}Assume $(C)$, $(FG)$ and $(CFG)$. Then for any $\delta>0$, any
$\varepsilon>0$ and any $M>m^{\prime}>m$ there exists $n(\varepsilon
,\delta,M)$ such that for all $n>n(\varepsilon,\delta,M)$,%
\[
\mathbb{P}\left(  \left\vert \sqrt{n}\M_{M,n}-\sqrt{n}\mathbb N_{M,n}\right\vert
>\varepsilon\right)  \leqslant\delta.
\]

\end{lemma}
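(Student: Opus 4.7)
Since $\overline{I_M}=[F(-M),F(M)]$ is compact and $F^{-1},G^{-1}$ are continuous, the curve $\Gamma_M=\{(F^{-1}(u),G^{-1}(u)):u\in\overline{I_M}\}$ is contained in a compact square $[-M',M']^2$. Combining $(C1)$ — which gives $c\in\mathcal{C}_1$ wherever at least one coordinate is in $[-m,m]$ — with $(C2)$ and its symmetric counterpart (provided by Definition \ref{defhyp}) on the corners $(m,+\infty)^2$ and $(-\infty,-m)^2$, one checks that $c\in\mathcal{C}_1$ on a compact neighbourhood $K$ of $\Gamma_M$; in particular $c_x=\partial_x c$ and $c_y=\partial_y c$ are uniformly continuous on $K$. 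Moreover $(FG1)$ implies that $h_X,h_Y$ are bounded and bounded away from $0$ on $\overline{I_M}$.

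\textbf{Taylor expansion.} For each $u\in I_M$, on the event that the segment between $(F^{-1}(u),G^{-1}(u))$ and $(\F_n^{-1}(u),\G_n^{-1}(u))$ lies in $K$, the mean value theorem applied to $t\mapsto c(F^{-1}(u)+t\varepsilon_n^X(u),G^{-1}(u)+t\varepsilon_n^Y(u))$ yields, for some point $(\xi_n(u),\eta_n(u))$ on that segment,
$$c(\F_n^{-1}(u),\G_n^{-1}(u))-c(F^{-1}(u),G^{-1}(u))=c_x(\xi_n(u),\eta_n(u))\varepsilon_n^X(u)+c_y(\xi_n(u),\eta_n(u))\varepsilon_n^Y(u).$$
Multiplying by $\sqrt n$ and subtracting $\sqrt n\,\mathbb N_{M,n}$, one writes
$$\sqrt n(\M_{M,n}-\mathbb N_{M,n})=\int_{I_M}\bigl[\Delta_n^X(u)\beta_n^X(u)+\Delta_n^Y(u)\beta_n^Y(u)\bigr]du,$$
where $\Delta_n^X(u)=c_x(\xi_n(u),\eta_n(u))-\nabla_x(u)$ and $\Delta_n^Y(u)=c_y(\xi_n(u),\eta_n(u))-\nabla_y(u)$.

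\textbf{Smallness of $\Delta_n$.} Since $F,G$ are strictly increasing and continuous, Glivenko--Cantelli combined with Dini's theorem, exactly as in the proof of Theorem \ref{theo:Consistance}, gives $\sup_{u\in\overline{I_M}}|\varepsilon_n^X(u)|\to 0$ and $\sup_{u\in\overline{I_M}}|\varepsilon_n^Y(u)|\to 0$ almost surely. Therefore with probability going to $1$ the points $(\xi_n(u),\eta_n(u))$ lie in an arbitrarily small neighbourhood of $\Gamma_M$ inside $K$, uniformly in $u\in I_M$. By uniform continuity of $c_x,c_y$ on $K$ this yields $\sup_{u\in I_M}|\Delta_n^X(u)|\to 0$ and $\sup_{u\in I_M}|\Delta_n^Y(u)|\to 0$ in probability.

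\textbf{Tightness of the integrated empirical quantile processes.} It remains to show that $\int_{I_M}|\beta_n^X(u)|\,du$ and $\int_{I_M}|\beta_n^Y(u)|\,du$ are bounded in probability. This follows from the Brownian approximation of Theorem \ref{approx} (already used at Step 3): on the enlarged probability space $\beta_n^X(u)=(B_n^X(u)+Z_n^X(u))/h_X(u)$ with $Z_n^X$ negligibly small and $1/h_X$ bounded on $\overline{I_M}$, whence $\mathbb E\!\int_{I_M}|\beta_n^X(u)|du\leq C\int_{I_M}\sqrt{u(1-u)}\,du+o(1)$ is uniformly bounded, and likewise for $Y$. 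Combining this with the uniform smallness of $\Delta_n^X,\Delta_n^Y$ and Markov's inequality concludes the proof. The only delicate step is the gluing argument establishing $\mathcal{C}_1$ regularity of $c$ across the interface between the regions covered by $(C1)$ and $(C2)$; this is purely a bookkeeping matter using the form $c(x,y)=\rho(|x-y|)$ on $(m,+\infty)^2$ and the stipulation $c(x,x)=0$.
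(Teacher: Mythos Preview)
Your proof is correct, and it takes a genuinely different route from the paper's. The paper does \emph{not} argue that $c$ is $\mathcal{C}_1$ on a full neighbourhood of $\Gamma_M$. Instead it splits $I_M=D_M^+(\tau_1)\cup D_M^-(\tau_1)$ according to whether $|\tau(u)|>\tau_1$ or $|\tau(u)|\leqslant\tau_1$, applies a first-order Taylor expansion only on $D_M^+(\tau_1)$ (away from the diagonal), and on $D_M^-(\tau_1)$ invokes the diagonal contraction hypothesis $(C3)$ to bound $\sqrt n\,\M_{M,n}^-(\tau_1)$ by $4C_\delta\,d(m',\tau_1')$, which is then made small by letting $\tau_1'\to 0$; the piece $\int_{D_M^-}\nabla\cdot\beta_n$ is handled separately. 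Your argument bypasses this decomposition entirely: the key observation underlying your ``bookkeeping'' is that $(FG4)$ (and its left-tail analogue from Definition~\ref{defhyp}) forces $\Gamma_M$ to stay at positive distance from each closed strip $\{x\geqslant m,\,y\geqslant m,\,|x-y|\leqslant\tau_1\}$ and its reflections, so that a sufficiently thin compact tube $K$ around $\Gamma_M$ lies in the region where $(C1)$ and $(C2)$ together give $c\in\mathcal C_1$. Then a single mean-value step plus uniform continuity of $\nabla c$ on $K$ suffices. Your approach is more direct and, notably, does not use $(C3)$ for this lemma; the paper's version makes the role of $(C3)$ explicit but at the cost of tracking the two regions $D_M^{\pm}(\tau_1)$ and the associated small parameters.
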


\begin{proof}
\begin{enumerate}
\item Under $(FG1)$, $h_{X}$ and $h_{Y}$ are away from $0$ on
$I_M$ and we write%
\[
\eta_{M}=\min\left(  \inf_{u\in I_M}h_{X}(u),\inf_{u\in I_M}%
h_{Y}(u)\right)>0  .
\]
We keep working on the probability space of Theorem \ref{approx}. In particular, since $I_M\subset\mathcal{I}_n$ we can aplly again Theorem  \ref{approx} and get the analogue of (\ref{Z})%
\begin{equation}
\mathbb{P}\left(  \sup_{u\in I_M}\left\vert\frac{ Z_{n}^{X}(u)}{h_{X}(u)}%
\right\vert >\frac{1}{n^{\xi}}\right)=o(1)%
,\mathbb{P}\left(  \sup_{u\in I_M}\left\vert \frac{Z_{n}%
^{Y}(u)}{h_{Y}(u)}\right\vert >\frac{1}{n^{\xi}}\right) =o(1)%
.\label{Zbis}%
\end{equation}
\smallskip
Introduce the event%
\[
A_{n}(M,C)=\left\{  \sup_{u\in I_M}\left\vert \F_{n}^{-1}(u)-F^{-1}%
(u)\right\vert +\left\vert \G_{n}^{-1}(u)-G^{-1}(u)\right\vert \leqslant
\frac{4C}{\sqrt{n}}\right\}  .
\]
By (\ref{Zbis}), for any $\delta>0$ one can find $C_{\delta}>0$ so large that,
for all $n$ large enough,%
\begin{align*}
& \mathbb{P}\left(  A_{n}(M,C_{\delta})^{c}\right) \\
& =\mathbb{P}\left(  \sup_{u\in I_M}\sqrt{n}\left\vert \F_{n}%
^{-1}(u)-F^{-1}(u)\right\vert +\sqrt{n}\left\vert \G_{n}^{-1}%
(u)-G^{-1}(u)\right\vert >4C_{\delta}\right) \\
& \leqslant\mathbb{P}\left(  \sup_{u\in I_M}\left\vert \frac
{B_{n}^{X}(u)}{h_{X}(u)}\right\vert >C_{\delta}\right)  +\mathbb{P}
\left(  \sup_{u\in I_M}\left\vert \frac{B_{n}^{Y}(u)}{h_{Y}%
(u)}\right\vert >C_{\delta}\right)  +o(1)\\
& \leqslant2\mathbb{P}\left(  \sup_{u\in I_M}\left\vert
B(u)\right\vert >\eta_{M}C_{\delta}\right)  +\frac{\delta}{2}\\
& \leqslant\delta
\end{align*}
where $B$ denotes a standard Brownian bridge. \\

\item  Since $F\neq G$ and $F,G$ are continuous, for any
$\tau_{1}\in\left(  0,\tau_{0}\right)  $ there exists an open interval
$I(\tau_{1})\subset I_M$ such that $\left\vert \tau(u)\right\vert
>\tau_{1}$ for $u\in I(\tau_{1})$, provided that $m>0$ is chosen large enough.
By taking $M>m$, by $(FG4)$ we further have $\tau_{M}=\sup_{u\in I_M%
}\left\vert \tau(u)\right\vert \geqslant\tau_{0}>\tau_{1}$. Thus
\begin{align*}
D_{M}^{+}(\tau_{1})  & =\left\{  u:\tau_{1}<\left\vert \tau(u)\right\vert
\leqslant\tau_{M}\right\}  \cap I_M\\
D_{M}^{-}(\tau_{1})  & =\left\{  u:\left\vert \tau(u)\right\vert \leqslant
\tau_{1}\right\}  \cap I_M%
\end{align*}
are such that $I(\tau_{1})\subset D_{M}^{+}(\tau_{1})%
\neq\emptyset$ and $D_{M}^{-}(\tau_{1})\subset I_M$ is possibly empty, and $D_{M}^{+}(\tau_{1})\cup D_{M}^{-}(\tau_{1})=I_M$. By
$(C3)$, for any $\left(x,y\right)$, $\left(
x^{\prime},y^{\prime}\right)  \in D_{m}(\tau)$,%
\[
\left\vert c\left(  x^{\prime},y^{\prime}\right)  -c\left(  x,y\right)
\right\vert \leqslant d(m,\tau)\left(  \left\vert x^{\prime}-x\right\vert
+\left\vert y^{\prime}-y\right\vert \right)
\]
with $d(m,\tau)\rightarrow0$ as $\tau\rightarrow0$ and $m$ is fixed. Observe
that $u\in D_{M}^{-}(\tau_{1})=D_{m}^{-}(\tau_{1})$ if, and only if, $\left(
F^{-1}(u),G^{-1}(u)\right)  \in D_{m}(\tau_{1})$. 
Let $\tau_{1}^{\prime}\in\left(
\tau_{1},\tau_{0}\right)  $ and $m^{\prime}\in\left(  m,M\right)  $.

 Now,
given $M$ and $C_{\delta}$, if $A_{n}(M,C_{\delta})$ is true for a large
enough $n$ then $\left(  \mathbb F_{n}^{-1}(u),\mathbb G_{n}^{-1}(u)\right)  \in D_{m^{\prime
}}(\tau_{1}^{\prime})$ whenever $\left(  F^{-1}(u), G^{-1}(u)\right)  \in
D_{m}(\tau_{1})\subset D_{m^{\prime}}(\tau_{1}^{\prime})$ and $u\in I_M$.
Thus, under the event $A_{n}(M,C_{\delta})$ it holds%
\begin{align*}
\sqrt{n}\M_{M,n}^{-}(\tau_{1})  & :=\sqrt{n}%
{\displaystyle\int\nolimits_{u\in D_{M}^{-}(\tau_{1})}}
\left\vert c\left(  \F_{n}^{-1}(u),\G_{n}^{-1}(u)\right)  -c\left(
F^{-1}(u),G^{-1}(u)\right)  \right\vert du\\
& \leqslant\sqrt{n}%
{\displaystyle\int\nolimits_{u\in D_{M}^{-}(\tau_{1})}}
d(m^{\prime},\tau_{1}^{\prime})\left(  \left\vert \F_{n}^{-1}(u)-F^{-1}%
(u)\right\vert +\left\vert \G_{n}^{-1}(u)-G^{-1}(u)\right\vert \right)  du\\
& \leqslant4C_{\delta}d(m^{\prime},\tau_{1}^{\prime}).
\end{align*}

\item The main term is%
\[
\sqrt{n}\M_{M,n}^{+}(\tau_{1}):=\sqrt{n}%
{\displaystyle\int\nolimits_{u\in D_{M}^{+}(\tau_{1})}}
\left(  c\left(  \F_{n}^{-1}(u),\G_{n}^{-1}(u)\right)  -c\left(  F^{-1}%
(u),G^{-1}(u)\right)  \right)  du.
\]
Under the event $A_{n}(M,C_{\delta})$ the Taylor expansion of $c(F^{-1}%
(u),G^{-1}(u))$ is justified on $D_{M}^{+}(\tau_{1})$, that is away from the
diagonal. 
As a matter of fact, under $(C1)$ we have, for $x,y$ in
$\left(  -M,M\right)  $ such that $\left\vert x-y\right\vert \geqslant\tau$,%
\[
\left\vert c\left(  x+\varepsilon_{x},y+\varepsilon_{y}\right)  -c\left(
x,y\right)  -\nabla_{x}(x,y)\varepsilon_{x}-\nabla_{y}(x,y)\varepsilon
_{y}\right\vert \leqslant\lambda(M,\tau)\Theta\left(  \left\vert
\varepsilon_{x}\right\vert +\left\vert \varepsilon_{y}\right\vert \right)  ,
\]
where $\Theta\left(  s\right)  /s\rightarrow0$ as $s\rightarrow0$ for $M$ and
$\tau_{1}$ fixed.
Then the expansion of $c(F^{-1}(u),G^{-1}(u))$ on $u\in D_{M}^{+}(\tau_{1})$ can be
writen as

 $$c\left( \F_{n}^{-1}(u),\G_{n}^{-1}(u)\right)-c(F^{-1}(u),G^{-1}(u)=\left(  \nabla_{x}(u)\beta_{n}^{X}(u)+\nabla_{y}(u)\beta_{n}^{Y}(u)\right)+{\cal R}_n(u).$$
We have
\begin{align*}
& \left\vert \sqrt{n}\M_{M,n}^{+}(\tau_{1})-%
{\displaystyle\int\nolimits_{u\in D_{M}^{+}(\tau_{1})}}
\left(  \nabla_{x}(u)\beta_{n}^{X}(u)+\nabla_{y}(u)\beta_{n}^{Y}(u)\right)
du\right\vert\\
&\leqslant \sqrt n \left\vert \int\nolimits_{u\in D_{M}^{+}(\tau_{1})}{\cal R}_n(u)du\right\vert\\
&\leqslant\lambda(M,\tau_{1})\sqrt{n}\Theta\left(  \frac{1}{\sqrt{n}}%
\sup_{u\in I_M}\left\vert \beta_{n}^{X}(u)\right\vert +\left\vert
\beta_{n}^{Y}(u)\right\vert \right)
\end{align*}
As $\vert \M_{M,n}-\M_{M,n}^{+}(\tau_{1})\vert \leqslant \M_{M,n}^{-}(\tau_{1})$, whenever $A_{n}(M,C_{\delta})$ is true we have%
\begin{align*}
& \left\vert \sqrt{n}\M_{M,n}-%
{\displaystyle\int\nolimits_{u\in D_{M}^{+}(\tau_{1})}}
\left(  \nabla_{x}(u)\beta_{n}^{X}(u)+\nabla_{y}(u)\beta_{n}^{Y}(u)\right)
du\right\vert \\
& \leqslant\sqrt{n}\M_{M,n}^{-}(\tau_{1})+\lambda(M,\tau_{1})\sqrt{n}%
\Theta\left(  \frac{4C_{\delta}}{\sqrt{n}}\right)
\end{align*}
where $\sqrt{n}\Theta\left(  4C_{\delta}\sqrt{n}\right)  \rightarrow0$ as
$n\rightarrow+\infty$. We also have $D_{M}^{-}(\tau_{1})
=I_M\setminus D_{M}^{+}(\tau_{1})\subset I_M$ and $\nabla
_{x},\nabla_{y}$ are bounded on $I_M$ thus%
\begin{align*}
& \left\vert
{\displaystyle\int\nolimits_{u\in D_{M}^{-}(\tau_{1})}}
\left(  \nabla_{x}(u)\beta_{n}^{X}(u)+\nabla_{y}(u)\beta_{n}^{Y}(u)\right)
du\right\vert \\
& \leqslant2m\frac{4C_{\delta}}{\sqrt{n}}\sup_{u\in I_M}\left\vert
\nabla_{x}(u)\right\vert +\left\vert \nabla_{y}(u)\right\vert.
\end{align*}
Hence under $A_{n}(M,C_{\delta})$ $\left\vert \sqrt{n}\M_{M,n}-%
\sqrt n \mathbb N_{M,n}\right\vert$ is bounded by 
$$ 4C_{\delta}d(m^{\prime},\tau_{1}^{\prime})+\lambda(M,\tau_{1})\sqrt{n}%
\Theta\left(  \frac{4C_{\delta}}{\sqrt{n}}\right)+2m\frac{4C_{\delta}}{\sqrt{n}}\sup_{u\in I_M}\left\vert
\nabla_{x}(u)\right\vert +\left\vert \nabla_{y}(u)\right\vert$$
Therefore, for any $\delta>0$, any $\varepsilon>0$ and any triplet
$M>m^{\prime}>m$ we can choose $\tau_{1}$ and $\tau_{1}^{\prime}>\tau_{1} $ so
small that $4C_{\delta}d(m^{\prime},\tau_{1}^{\prime})\leqslant\varepsilon/2$.
Then there exists $n(\varepsilon,\delta,M)$ such that for all $n>n(\varepsilon,\delta,M)$,%
\[
\mathbb{P}\left( \left\vert \sqrt{n}\M_{M,n}-%
\sqrt n \mathbb N_{M,n}\right\vert>\varepsilon\right)  \leqslant\mathbb{P}\left(  A_{n}(M,C_{\delta}%
)^{c}\right)  \leqslant\delta.
\]

\end{enumerate}
\end{proof}

\noindent\begin{bf}Step 5: Conclusion\end{bf}\\

\noindent Now recall that $\sqrt{n}\left(  W_{c}(\F_{n},\mathbb G_{n})-W_{c}(F,G)\right)
=\sqrt{n}D_{n}%
+\sqrt{n}S_{n}+\sqrt{n}T_{n}+\sqrt{n}U_{M,n}+\sqrt{n}\M_{M,n}$. By Steps 1, 2 $\sqrt{n}D_{n}
+\sqrt{n}S_{n}+\sqrt{n}T_{n}$ converges to zero in probability. Hence, we only need to prove  the weak convergence of  $\sqrt{n}U_{M,n}+\sqrt{n}\M_{M,n}$.  Let $\mathbb{X}_\infty$ be a centered Gaussian random variable with variance $\sigma^{2}(\Pi,c)$.
For any  $B$-bounded $r$-Lipschitz  function $\Phi$, we have

\begin{align*}
&\E\left[\left|\Phi\left( \sqrt{n}(U_{M,n}+\M_{M,n})\right)-\Phi\left(\mathbb{X}_\infty\right)\right|\right]\\&\leqslant \E\left[\left|\Phi\left( \sqrt{n}(U_{M,n}+\M_{M,n})\right)-\Phi\left(\sqrt{n}\M_{M,n}\right)\right|\right]
+\E\left[\left|\Phi\left( \sqrt{n}\M_{M,n}\right)-\Phi\left(\mathbb{X}_\infty\right)\right|\right]
\end{align*}
Dealing with the first right hand term we have

\begin{align*}
\E&\left[\left|\Phi\left( \sqrt{n}(U_{M,n}+\M_n)\right)-\Phi\left(\sqrt{n}\M_{M,n}\right)\right|\right]\\&=\E\left[\left|\Phi\left( \sqrt{n}(U_{M,n}+\M_{M,n})\right)-\Phi\left(\sqrt{n}\M_{M,n}\right)\right|\1_{| \sqrt{n}U_{M,n}|>\lambda}\right]\\
&+\E\left[\left|\Phi\left( \sqrt{n}(U_{M,n}+\M_{M,n})\right)-\Phi\left(\sqrt{n}\M_{M,n}\right)\right|\1_{| \sqrt{n}U_{M,n}|\leqslant\lambda}\right]\\
&\leqslant 2B\P\left(| \sqrt{n}U_{M,n}|>\lambda\right)r\lambda
\end{align*}
By lemma \ref{cvUMn} we can make $\displaystyle 2B\P\left(| \sqrt{n}U_{M,n}|>\lambda\right)r\lambda$ as small as we want by choosing $\lambda$ small enough and $M$ large enough.\\
We now consider the second right hand term
\begin{align*}
\E&\left[\left|\Phi\left( \sqrt{n}\M_{M,n}\right)-\Phi\left(\mathbb{X}_\infty\right)\right|\right]
\leqslant\\&\E\left[\left|\Phi\left( \sqrt{n}\M_{M,n}\right)-\Phi\left(\sqrt{n}\N_{M,n}\right)\right|\right]+\E\left[\left|\Phi\left( \sqrt{n}\N_{M,n}n\right)-\Phi\left(\mathbb{X}_\infty\right)\right|\right]
\end{align*}

By lemma \ref{MnNn} the term $\displaystyle\E\left[\left|\Phi\left( \sqrt{n}\M_{M,n}\right)-\Phi\left(\sqrt{n}\N_{M,n}\right)\right|\right]$ can be made as small as desired. As $\sqrt{n}\N_{M,n}$ is a Gaussian random variable with variance 

\begin{equation}
\sigma^{2}(M,\Pi,c)=%
{\displaystyle\int\nolimits_{F(-M)}^{F(M)}}
{\displaystyle\int\nolimits_{F(-M)}^{F(M)}}
\nabla(u)\Sigma(u,v)\nabla(v)dudv
\end{equation}
that converges to $\sigma^{2}(\Pi,c)$, the term $\displaystyle \E\left[\left|\Phi\left( \sqrt{n}\N_{M,n}\right)-\Phi\left(\mathbb{X}_\infty\right)\right|\right]$ is small enough for large enough $M$. This achieves the proof of Theorem \ref{mainth}.

\section{Appendix}\label{sec:app}

\subsection{Proof of auxiliary results}

\subsubsection{Proof of Lemma \ref{hongrois}}

Remind that $\Delta_{n}=\left[  \overline{u},1-k_{n}/n\right]  $ where
$k_{n}/\log\log n\rightarrow+\infty$ and $k_n/n\to 0$ comes from (\ref{kn}) and (\ref{knbis}).
Let us study $\left(  \beta_{n},h\right)  =(\beta_{n}^{X},h_{X})$ in Lemma \ref{hongrois}. Under
$(FG1)$ we have $f>0$\ on $\mathbb{R}$ thus the random variables
$U_{i}=F(X_{i})$ are independent, uniformely distributed on $\left[
0,1\right]  $ and such that $X_{(i)}=F^{-1}(U_{(i)})$. Let $\mathbb F_{U,n}$ and
$\mathbb F_{U,n}^{-1}$ denote the empirical cdf and quantile functions
associated to $U_{1},...,U_{n}$ so that $\mathbb F_{n}=\mathbb F_{U,n}\circ F$ and $\mathbb F_{n}%
^{-1}=F^{-1}\circ \mathbb F_{U,n}^{-1}$. Write $q_{n}(u)=\mathbb F_{U,n}^{-1}(u)-u$. By \cite{Csorg}
we have%
\begin{equation}
\underset{n\rightarrow\infty}{\lim\sup}\sup_{u\in\Delta_{n}}\frac{\sqrt
{n}q_{n}(u)}{\sqrt{\left(  1-u\right)  \log\log n}}\leqslant4\quad
a.s.\label{CSOREV78}%
\end{equation}
Since $(FG1)$ ensures that $h_{X}$ is $\mathcal{C}_{1}$ on $\Delta_{n}$ the
following expansion almost surely asymptotically holds,%
\begin{align}
&  \sup_{u\in\Delta_{n}}\left\vert \left(  F^{-1}(u+q_{n}(u))-F^{-1}%
(u)\right)  h_{X}(u)-q_{n}(u)\right\vert \nonumber\\
&  =\sup_{u\in\Delta_{n}}\left\vert \left(  \frac{q_{n}(u)}{h_{X}(u)}%
+\frac{q_{n}^{2}(u)}{2}\left(  \frac{1}{h_{X}(u)}\right)  _{u=u^{\ast}%
}^{\prime}\right)  h_{X}(u)-q_{n}(u)\right\vert \nonumber\\
&  \leqslant A_{n}B_{n}\nonumber
\end{align}
where $\left\vert u-u^{\ast}\right\vert \leqslant\left\vert q_{n}%
(u)\right\vert $ and, by (\ref{CSOREV78}),%
\[
A_{n}=\sup_{u\in\Delta_{n}}\frac{q_{n}^{2}(u)}{2\left(  1-u\right)  }%
\leqslant K\frac{\log\log n}{n}%
\]
whereas, by $(FG2)$,%
\begin{align*}
B_{n}  & =\sup_{u\in\Delta_{n}}\left(  1-u\right)  h_{X}(u)\left\vert \left(
\frac{1}{h_{X}(u)}\right)  _{u=u^{\ast}}^{\prime}\right\vert \\
& \leqslant\sup_{u\in\Delta_{n}}(1-u^{\ast})h_{X}(u^{\ast})\left\vert \left(
\frac{1}{h_{X}(u)}\right)  _{u=u^{\ast}}^{\prime}\right\vert \sup_{u\in
\Delta_{n}}\frac{1-u}{1-u^{\ast}}\frac{h_{X}(u)}{h_{X}(u^{\ast})}\\
& \leqslant K\sup_{u\in\Delta_{n}}\frac{1-u}{1-u^{\ast}}\sup_{u\in\Delta_{n}%
}\frac{h_{X}(u)}{h_{X}(u^{\ast})}.
\end{align*}
Now, (\ref{CSOREV78}) shows that the random sequence
\[
\sup_{u\in\Delta_{n}}\left\vert \frac{1-u^{\ast}}{1-u}-1\right\vert
\leqslant\sup_{u\in\Delta_{n}}\frac{1}{\sqrt{1-u}}\sup_{u\in\Delta_{n}}\left\vert
\frac{q_{n}(u)}{\sqrt{1-u}}\right\vert \leqslant5\sqrt{\frac{n}{k_{n}}}%
\sqrt{\frac{\log\log n}{n}}%
\]
almost surely tends to $0$. Moreover $(FG2)$ implies that%
\[
\left\vert \left(  \log h_{X}(u)\right)  ^{\prime}\right\vert \leqslant
K\left(  \log\frac{1}{1-u}\right)  ^{\prime}%
\]
so that $\left\vert \log h_{X}(u_{2})-\log h_{X}(u_{1})\right\vert \leqslant
K(\log(1-u_{1})-\log(1-u_{2}))$ for any $u_{1}<u_{2}$ in $\Delta_{n}$.
Therefore, the random sequence%
\[
\sup_{u\in\Delta_{n}}\frac{h_{X}(u)}{h_{X}(u^{\ast})}\leqslant\sup_{u\in
\Delta_{n}}\max\left((  \frac{1-u^{\ast}}{1-u}),(  \frac{1-u}{1-u^{\ast}})\right)^{K}%
\]
almost surely tends to $1$. We have shown that it almost surely ultimately
holds%
\begin{align*}
\sup_{u\in\Delta_{n}}\left\vert \frac{\beta_{n}^{X}(u)h_{X}(u)-\sqrt{n}%
q_{n}(u)}{\sqrt{(1-u)\log\log n}}\right\vert  & \leqslant A_{n}B_{n}%
\sqrt{\frac{n}{\log\log n}}\sup_{u\in\Delta_{n}}\frac{1}{\sqrt{1-u}}\\
& \leqslant10K\sqrt{\frac{\log\log n}{k_{n}}}%
\end{align*}
which proves Lemma \ref{hongrois}, by (\ref{CSOREV78}) again.
\subsubsection{Proof of Lemma \ref{lem:truc2}.}\label{sectruc2}

In view of (\ref{knbis}) and (\ref{In}) we eventually have $I_{n}\subset
\Delta_{n}$. Hence Lemma \ref{hongrois} and $(FG3)$ imply that, almost surely,
for all $n$ large
\begin{align*}
\sup_{u\in I_{n}}\frac{\left\vert \mathbb{F}_{n}^{-1}(u)-F^{-1}(u)\right\vert
}{F^{-1}(u)} &  \leqslant2K_{0}\sup_{u\in I_{n}}\frac{\sqrt{1-u}}%
{F^{-1}(u)h_{X}(u)}\sqrt{\frac{\log\log n}{n}}\\
&  =2K_{0}\sup_{u\in I_{n}}H_{X}(u)\sqrt{\frac{\log\log n}{n(1-u)}}\leqslant
K\sqrt{\frac{\log\log n}{k_{n}}}.
\end{align*}
The same bound holds for $\left\vert \mathbb{G}_{n}^{-1}(u)-G^{-1}%
(u)\right\vert /G^{-1}(u)$. By (\ref{knbis}) we then get%
\[
\lim_{n\rightarrow+\infty}\sup_{u\in I_{n}}\frac{\left\vert \varepsilon
_{n}^{Y}(u)\right\vert }{F^{-1}(u)}\leqslant\lim_{n\rightarrow+\infty}%
\sup_{u\in I_{n}}\frac{\left\vert \varepsilon_{n}^{Y}(u)\right\vert }%
{G^{-1}(u)}=\lim_{n\rightarrow+\infty}\sup_{u\in I_{n}}\frac{\left\vert
\varepsilon_{n}^{X}(u)\right\vert }{F^{-1}(u)}=0\quad a.s.
\]
so that $\sup_{u\in I_{n}}\left\vert \varepsilon_{n}(u)\right\vert /F^{-1}(u)$
almost surely vanishes. Under $(FG1)$ the law of large numbers for
$\mathbb{F}_{n}$ and $\mathbb{G}_{n}$ readily implies%
\[
\lim_{n\rightarrow+\infty}\mathbb{F}_{n}^{-1}\left(  1-\frac{h_{n}}{n}\right)
=\lim_{n\rightarrow+\infty}\mathbb{G}_{n}^{-1}\left(  1-\frac{h_{n}}%
{n}\right)  =+\infty\quad a.s.
\]
Therefore for any $q_{0}>0$, all $n$ large enough and all $u\in I_{n}$, it
holds%
\begin{equation}
\min\left(  \mathbb{F}_{n}^{-1}(u),F^{-1}(u),\mathbb{G}_{n}^{-1}%
(u),G^{-1}(u)\right)  >m,\quad\left\vert \varepsilon_{n}(u)\right\vert
<q_{0}F^{-1}(u)\label{ctorho}%
\end{equation}
which implies, by $(C2)$ and for $\tau(u)=F^{-1}(u)-G^{-1}(u)$,%
\[
c\left(  \mathbb{F}_{n}^{-1}(u),\mathbb{G}_{n}^{-1}(u)\right)  -c\left(
F^{-1}(u),G^{-1}(u)\right)  =\rho\left(  \left\vert \tau(u)+\varepsilon
_{n}(u)\right\vert \right)  -\rho\left(  \tau(u)\right)  .
\]
\textbf{Case 1.} Assume that $\gamma=0$ in $(C2)$. By Proposition \ref{convex}
$\rho^{\prime}$ is increasing and%
\[
\left\vert \rho\left(  \left\vert \tau(u)+\varepsilon_{n}(u)\right\vert
\right)  -\rho\left(  \tau(u)\right)  \right\vert \leqslant\rho^{\prime
}\left(  \tau(u)+\left\vert \varepsilon_{n}(u)\right\vert \right)  \left\vert
\varepsilon_{n}(u)\right\vert .
\]
Observe that if%
\[
\underset{u\rightarrow1}{\lim\inf}\frac{G^{-1}(u)}{F^{-1}(u)}=q_{1}>0
\]
then the result follows with $K_{2}=1$ since by taking $0<q_{0}<q_{1}%
\leqslant1$ in (\ref{ctorho}) we ultimately have, with probability one,%
\[
\rho^{\prime}\left(  \tau(u)+\left\vert \varepsilon_{n}(u)\right\vert \right)
=\rho^{\prime}\left(  F^{-1}(u)\left(  1-\frac{G^{-1}(u)}{F^{-1}(u)}%
+\frac{\left\vert \varepsilon_{n}(u)\right\vert }{F^{-1}(u)}\right)  \right)
\leqslant\rho^{\prime}\left(  F^{-1}(u)\right)  .
\]
If $q_{1}=0$, let us control $\rho^{\prime}\left(  \tau(u)+\left\vert
\varepsilon_{n}(u)\right\vert \right)  \leqslant\rho^{\prime}\left(
F^{-1}(u)\left(  1+\left\vert \varepsilon_{n}(u)\right\vert /F^{-1}(u)\right)
\right)  $. Remind (\ref{slow}) and the fact that $l$ is increasing whereas
$l^{\prime}$ is decreasing, by (\ref{L'}) and (\ref{L1}). For $y>x$,
$x\rightarrow+\infty$, $y\sim x$ we have $l(x)\leqslant l(y)\leqslant
l(2x)\sim l(x)$ and%
\[
\frac{\rho^{\prime}(y)}{\rho^{\prime}(x)}=\frac{l^{\prime}(y)}{l^{\prime}%
(x)}\frac{\rho(y)}{\rho(x)}\leqslant\frac{\rho(y)}{\rho(x)}=\exp\left(
l(y)-l(x)\right)  \leqslant\exp\left(  l^{\prime}(x)(y-x)\right)  .
\]
Therefore, by (\ref{L'}), (\ref{theta1}) and $(FG3)$, taking $\theta
_{1}^{\prime}\in(\theta_{1},\theta-1)$ yields%
\begin{align*}
1 &  \leqslant\frac{1}{\rho^{\prime}\circ F^{-1}(u)}\rho^{\prime}\left(
F^{-1}(u)\left(  1+\frac{\left\vert \varepsilon_{n}(u)\right\vert }{F^{-1}%
(u)}\right)  \right)  \\
&  \leqslant\exp\left(  l^{\prime}\circ F^{-1}(u)\left\vert \varepsilon
_{n}(u)\right\vert \right)  \\
&  =\exp\left(  \varepsilon_{1}\circ F^{-1}(u)l\circ F^{-1}(u)\frac{\left\vert
\varepsilon_{n}(u)\right\vert }{F^{-1}(u)}\right)  \\
&  \leqslant\exp\left(  \left(  l\circ F^{-1}\left(  1-\frac{k_{n}}{n}\right)
\right)  ^{\theta_{1}^{\prime}}K\sqrt{\frac{\log\log n}{k_{n}}}\right)
\end{align*}
provided $n$ is large enough and $u\in I_{n}$. Moreover (\ref{eq:lpsi})
implies%
\begin{equation}
l\circ F^{-1}\left(  1-\frac{k_{n}}{n}\right)  =l\circ\psi_{X}^{-1}\left(
\log\left(  \frac{n}{k_{n}}\right)  \right)  \leqslant l\circ\psi_{X}%
^{-1}\left(  \log n\right)  \leqslant\log n.\label{CFG1logn}%
\end{equation}
By choosing $\theta^{\prime}$ in (\ref{knbis}) such that $\theta
>\theta^{\prime}>1+\theta_{1}^{\prime}\geqslant\max(1,2\theta_{1}^{\prime})$
we get%
\[
\lim_{n\rightarrow+\infty}\sup_{u\in I_{n}}\frac{\rho^{\prime}\left(
\tau(u)+\left\vert \varepsilon_{n}(u)\right\vert \right)  }{\rho^{\prime}\circ
F^{-1}(u)} \leqslant1\quad a.s.
\]
which yields the result with $K_{2}=1$ again.\smallskip

\noindent\textbf{Case 2.} Assume that $\gamma>1$ in $(C2)$. Since $l^{\prime}$
is now increasing the above argument fails to guaranty that $\rho^{\prime
}(x)\sim\rho^{\prime}(y)$ as $y\sim x$ are sufficiently close. Instead we
check the sufficient condition in Proposition \ref{C3rho}. The function
$l(x)/x$ is increasing as it is regularly varying with index $\gamma-1>0$.
Recall also that $(CFG)$ yields (\ref{CFG1logn}) and that $H=H_{X}+H_{Y}$ is
bounded under $(FG3)$. As a consequence of $I_{n}\subset\Delta_{n}$ and Lemma
\ref{hongrois} we almost surely have, for all $n$ large,%
\begin{align}
\sup_{u\in I_{n}}\frac{l\circ\tau(u)}{\tau(u)}\left\vert \varepsilon
_{n}(u)\right\vert  &  \leqslant2K_{0}\sup_{u\in I_{n}}l\circ F^{-1}%
(u)H(u)\sqrt{\frac{\log\log n}{n(1-u)}}\nonumber\\
&  \leqslant2K_{0}l\circ F^{-1}\left(  1-\frac{k_{n}}{n}\right)  \sqrt
{\frac{\log\log n}{k_{n}}}\sup_{u\in I_{n}}H(u)\nonumber\\
&  \leqslant K\frac{\log n}{\sqrt{k_{n}}}\sqrt{\log\log n}\sup_{u\in I_{n}%
}H(u).\label{DL1}%
\end{align}
Since 
$\theta>2$ in $(CFG)$ choosing $\theta^{\prime}\in\left(  2,\theta\right)  $ in
(\ref{knbis}) makes the upper bound in (\ref{DL1}) vanish. Therefore, under
$(CFG)$ the requirements of Proposition \ref{C3rho} are almost surely
ultimately fulfiled with
\[
x_{0}=\tau_{0},\quad x=\tau(u),\quad\left\vert \varepsilon\right\vert
=\left\vert \varepsilon_{n}(u)\right\vert \leqslant\frac{\delta_{0}}%
{l^{\prime}(x)}=\frac{\delta_{0}}{l^{\prime}\circ\tau(u)}=\frac{\delta_{0}%
\tau(u)}{\gamma l\circ\tau(u)},\quad u\in I_{n},
\]
which entails that, for all $n$ large enough and $K_{2}=k_{0}$,%
\begin{equation}
\left\vert \rho\left(  \left\vert \tau(u)+\varepsilon_{n}(u)\right\vert
\right)  -\rho\left(  \tau(u)\right)  \right\vert \leqslant k_{0}\rho^{\prime
}\circ\tau(u)\left\vert \varepsilon_{n}(u)\right\vert \leqslant K_{2}%
\rho^{\prime}\circ F^{-1}(u)\left\vert \varepsilon_{n}(u)\right\vert
.\label{case23}%
\end{equation}
\textbf{Case 3.}\ Assume that $0<\gamma\leqslant1$ in $(C2)$. Since $l(x)/x$
is either decreasing or, if $\gamma=1$, not even monotone, $l\circ\tau
(u)/\tau(u)$ cannot be compared to the worse case $\tau(u)\sim F^{-1}(u)$
directly. However, by Proposition \ref{C3rho}, if $u\in I_{n}$ is such that
$\left\vert \varepsilon_{n}(u)\right\vert \leqslant\delta_{0}/l^{\prime}%
(\tau(u))$ then (\ref{case23}) holds. Consider%
\[
I_{n}^{-}=\left\{  u\in I_{n}:\left\vert \varepsilon_{n}(u)\right\vert
>\frac{\delta_{0}}{l^{\prime}\circ\tau(u)}\right\}  .
\]
Since $l^{\prime}(x)\sim\gamma l(x)/x$ and $\rho(x)\sim x\rho^{\prime
}(x)/\gamma l(x)$ as $x\rightarrow+\infty$, for any $0<x_{0}<\tau_{0}$ we can
find $\xi_{0}>1/\gamma$ such that%
\begin{equation}
\rho(x)\leqslant\xi_{0}\rho^{\prime}(x)\frac{x}{l(x)},\quad x\geqslant
x_{0}.\label{rhocas3}%
\end{equation}
Let $\xi_{1}>\gamma/\delta_{0}$ and assume $n$ so large that $l\circ
\tau(u)>1/\xi_{1}$ and $\tau(u)\geqslant\tau_{0}$ for $u\in I_{n}$. Any $u\in
I_{n}^{-}$ then satisfies%
\begin{align*}
\tau_{0}  & \leqslant\max\left(  \tau(u),\left\vert \varepsilon_{n}%
(u)\right\vert \right)  \\
& \leqslant\max\left(  \delta_{0}\xi_{1}\frac{l\circ\tau(u)}{l^{\prime}%
\circ\tau(u)},\left\vert \varepsilon_{n}(u)\right\vert \right)  \\
& \leqslant\frac{x_{n}(u)}{2}:=\xi_{1}l\circ\tau(u)\left\vert \varepsilon
_{n}(u)\right\vert .
\end{align*}
By (\ref{rhocas3}) and the fact that $l(x)$ is increasing it follows that%
\begin{align*}
\left\vert \rho\left(  \left\vert \tau(u)+\varepsilon_{n}(u)\right\vert
\right)  -\rho\left(  \tau(u)\right)  \right\vert  &  \leqslant\rho\left(
\tau(u)+\left\vert \varepsilon_{n}(u)\right\vert \right)  \\
&  \leqslant\xi_{0}\rho^{\prime}(x_{n}(u))\frac{x_{n}(u)}{l\circ\tau(u)}\\
&  =2\xi_{0}\xi_{1}\rho^{\prime}(x_{n}(u))\left\vert \varepsilon
_{n}(u)\right\vert .
\end{align*}
Using (\ref{CFG1logn}) as for (\ref{DL1}) we almost surely eventually have%
\[
\frac{1}{2\xi_{1}}\sup_{u\in I_{n}^{-}}\frac{x_{n}(u)}{F^{-1}(u)}=\sup_{u\in
I_{n}^{-}}l\circ\tau(u)\frac{\left\vert \varepsilon_{n}(u)\right\vert }%
{F^{-1}(u)}\leqslant K\frac{\log n}{\sqrt{k_{n}}}\sqrt{\log\log n}\sup_{u\in
I_{n}^{-}}H(u)
\]
and the upper bound tends to $0$ provided that $2<\theta^{\prime}<\theta$ from
(\ref{knbis}). As a
conclusion, $x_{n}(u)\leqslant F^{-1}(u)$ on $I_{n}^{-}$ even if $\left\vert
\varepsilon_{n}(u)\right\vert $ is large and it asymptotically holds, for
$K_{2}=\max(k_{0},2\xi_{0}\xi_{1})$,%
\[
\left\vert \rho\left(  \left\vert \tau(u)+\varepsilon_{n}(u)\right\vert
\right)  -\rho\left(  \tau(u)\right)  \right\vert \leqslant K_{2}\rho^{\prime
}(F^{-1}(u))\left\vert \varepsilon_{n}(u)\right\vert ,\quad u\in I_{n}.
\]

\subsubsection{Strong approximation of the joint quantile processes}

In this section $(FG1)$ and $(FG2)$ are crucially required to justify the key
approximation used at steps 4 and of the main proof. Let $k_{n}$ be defined as
in (\ref{kn}), thus $k_{n}/n\rightarrow0$, $k_{n}/\log\log n\rightarrow+\infty$.
Consider $\mathcal{I}_{n}=\left(  k_{n}/n,1-k_{n}/n\right)  $ which contains
both $I_{M,n}$ from (\ref{Imn}) and $\Delta_{n}$ from (\ref{In}).\ As in (\ref{betan}) write
$\beta_{n}^{X}=\sqrt{n}(\mathbb{F}_{n}^{-1}-F^{-1})$ and $\beta_{n}^{Y}%
=\sqrt{n}(\mathbb{G}_{n}^{-1}-G^{-1})$ the quantile processes associated to
each sample. Our goal is to derive a coupling of
\[
\left\{  (\beta_{n}^{X}(u),\beta_{n}^{Y}(u)):u\in\mathcal{I}_{n}\right\}
\quad\text{and}\quad\left\{  \left(  \frac{B_{n}^{X}(u)}{h_{X}(u)},\frac
{B_{n}^{Y}(u)}{h_{Y}(u)}\right)  :u\in\mathcal{I}_{n}\right\}
\]
where $(B_{n}^{X},B_{n}^{Y})$ are two marginal standard Brownian Bridges%
\begin{align}\label{ponts}
B_{n}^{X}(u)  & =\mathbb{B}_{n}\left(  \mathcal{H}_{F^{-1}(u)}\right)
,\quad\mathcal{H}_{x_{0}}=\left\{  (x,y):x\leqslant x_{0}\right\}  ,\\
B_{n}^{Y}(u)  & =\mathbb{B}_{n}\left(  \mathcal{H}^{G^{-1}(u)}\right)
,\quad\mathcal{H}^{y_{0}}=\left\{  (x,y):y\leqslant y_{0}\right\}  ,
\end{align}
indexed by $u\in\left[  0,1\right]  $ and driven by a sequence $\mathbb{B}%
_{n}$ of $\Pi$-Brownian Bridge indexed by the collection $\mathcal{C}$ of half
planes $\mathcal{H}_{x_{0}}$ or $\mathcal{H}^{y_{0}}$. In other words,
$\mathbb{B}_{n}$ is a zero mean Gaussian process indexed by $\mathcal{C}$
having covariance%
\[
cov(\mathbb{B}_{n}(A),\mathbb{B}_{n}(B))=\Pi(A\cap B)-\Pi(A)\Pi(B)
\]
for $A,B\in\mathcal{C}$, and $B_{n}^{X}$ are centered Gaussian processes with
covariance%
\begin{align*}
cov(B_{n}^{X}(u),B_{n}^{X}(v))  & =\Pi(\mathcal{H}_{F^{-1}(u)}\cap
\mathcal{H}_{F^{-1}(v)})-uv=\min(u,v)-uv\\
cov(B_{n}^{Y}(u),B_{n}^{Y}(v))  & =\Pi(\mathcal{H}^{G^{-1}(u)}\cap
\mathcal{H}^{G^{-1}(v)})-uv=\min(u,v)-uv\\
cov(B_{n}^{X}(u),B_{n}^{Y}(v))  & =\Pi(\mathcal{H}_{F^{-1}(u)}\cap
\mathcal{H}^{G^{-1}(v)})-uv=L(u,v)-uv
\end{align*}
for $u,v\in\left[  0,1\right]  $, where the usual copula function
$L(u,v)=H(F^{-1}(u),G^{-1}(v))$ measures the distortion between $\Pi$ and
$P\otimes Q$ on all quadrants, half spaces and then rectangles.

The coupling is achieved at Theorem \ref{approx} simply by combining the strong approximation of the empirical process (see \cite{Ber})
$$\Lambda_{n}(A)=\sqrt{n}(\Pi_{n}(A)-\Pi(A)),\  A\in {\cal C},\  \Pi_{n}
=\frac{1}{n}\sum_{i\leqslant n}\delta_{(X_{i},Y_{i})}$$  with the
usual quantile transform and classical results for real quantiles. This result
has an interest by itself as it is valid whatever the joint law $\Pi$
satisfying the marginal conditions $(FG1)$ and $(FG2)$. 
\begin{remark}
Theorem \ref{approx} remains valid for the $d$ marginal quantile processes of a
law $\Pi$ in $\mathbb{R}^{d}$ provided each marginal laws obeys $(FG1)$ and
$(FG2)$, with obviously no change in the proof for $d=2$. 
\end{remark}

\begin{theorem}
\label{approx} Assume  that $F,G$ satisfy  $(FG1)$ and $(FG2)$.
One can built on the same probability space the sequence $\left\{
(X_{n},Y_{n})\right\}  $ and a sequence of versions of $\left\{  (B_{n}%
^{X}(u),B_{n}^{Y}(u)):u\in\mathcal{I}_{n}\right\}  $ such that%
\[
\beta_{n}^{X}(u)=\frac{B_{n}^{X}(u)+Z_{n}^{X}(u)}{h_{X}(u)},\quad\beta_{n}%
^{Y}(u)=\frac{B_{n}^{Y}(u)+Z_{n}^{Y}(u)}{h_{Y}(u)}%
\]
satisfies, for some $\xi>0$,%
\[
\lim_{n\rightarrow+\infty}n^{\xi}\sup_{u\in\mathcal{I}_{n}}\left\vert Z_{n}%
^{X}(u)\right\vert =\lim_{n\rightarrow+\infty}n^{\xi}\sup_{u\in\mathcal{I}_{n}%
}\left\vert Z_{n}^{Y}(u)\right\vert =0\quad a.s.
\]
Moreover we can take%
\[
(B_{n}^{X}(u),B_{n}^{Y}(u))=\frac{1}{\sqrt{n}}%
{\displaystyle\sum\limits_{k=1}^{n}}
(G_{k}^{X}(u),G_{k}^{Y}(u))
\]
where $\left\{  (G_{k}^{X}(u),G_{k}^{Y}(u)):u\in\left(  0,1\right)  \right\}
$ is a sequence of independent versions of Brownian Bridges $(G^{X},G^{Y})$
such that $cov(G^{X}(u),G^{Y}(v))=L(u,v)-uv$.
\end{theorem}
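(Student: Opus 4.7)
The plan is to assemble the approximation from three independent ingredients: a KMT-type strong approximation for the joint empirical process indexed by half-planes, the classical Bahadur-Kiefer representation for the uniform quantile process, and a Taylor expansion that transfers the uniform approximation back to the general marginals $F^{-1}$ and $G^{-1}$. The structure mirrors and extends the one-sided argument already used in Lemma \ref{hongrois}, replacing the $\sqrt{\log\log n}$ rate by a polynomial rate $n^{-\xi}$.

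First I would invoke the result of \cite{Ber} (the strong approximation for the bivariate empirical process on a Vapnik-Chervonenkis class of sets) to obtain a sequence $\mathbb{B}_n$ of $\Pi$-Brownian bridges indexed by the class $\mathcal{C}$ of half-planes such that
$$
\sup_{A\in\mathcal{C}}\left|\Lambda_n(A)-\mathbb{B}_n(A)\right|=O(n^{-\xi_0})\quad a.s.
$$
for some $\xi_0>0$. Restricting $\mathbb{B}_n$ to $\mathcal{H}_{F^{-1}(u)}$ and to $\mathcal{H}^{G^{-1}(u)}$ gives $B_n^X$ and $B_n^Y$; the covariance computations performed just before Theorem \ref{approx} show that these are standard Brownian bridges in $u$, with the prescribed joint covariance $L(u,v)-uv$. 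In particular, writing $\alpha_n^X(u)=\sqrt{n}(\mathbb{F}_n(F^{-1}(u))-u)$, the above gives $\sup_u|\alpha_n^X(u)-B_n^X(u)|=O(n^{-\xi_0})$ a.s., and the same for the $Y$ marginal.

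Second, via the probability integral transform $U_i=F(X_i)$, I would apply to the uniform quantile process $q_n(u)=\mathbb{F}_{U,n}^{-1}(u)-u$ the classical Bahadur-Kiefer representation (the polynomial-rate refinement of the bound from \cite{Csorg} used in Lemma \ref{hongrois}), which yields $\sup_{u\in\mathcal{I}_n}|\sqrt{n}\,q_n(u)+\alpha_{U,n}(u)|=O(n^{-\xi_1})$ for some $\xi_1>0$. Combining this with the Taylor expansion of $F^{-1}$ around $u$ exactly as in the computation of $A_n B_n$ in the proof of Lemma \ref{hongrois}, and using $(FG1)$--$(FG2)$ to control $h_X(u)/h_X(u^\ast)$ and $(1-u^\ast)/(1-u)$ over the random window of width $|q_n(u)|$, I would get
$$
\beta_n^X(u)\,h_X(u)=\sqrt{n}\,q_n(u)+\mathcal{E}_n^X(u),\quad \sup_{u\in\mathcal{I}_n}|\mathcal{E}_n^X(u)|=O(n^{-\xi_2}),
$$
and similarly for $\beta_n^Y$. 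Chaining these with the bivariate KMT coupling from the first step and taking $\xi=\min(\xi_0,\xi_1,\xi_2)/2$ then yields the decomposition $\beta_n^X(u)=(B_n^X(u)+Z_n^X(u))/h_X(u)$, $\beta_n^Y(u)=(B_n^Y(u)+Z_n^Y(u))/h_Y(u)$ with the required polynomial remainder.

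The main obstacle is ensuring uniformity of all the remainders up to the moving endpoints $k_n/n$ and $1-k_n/n$ of $\mathcal{I}_n$. Near those endpoints the uniform quantile process $q_n(u)$ has typical size of order $\sqrt{(1-u)\log\log n/n}$, so that dividing by $h_X(u)$ can blow up if $h_X$ degenerates too fast; this is precisely the regime where $(FG2)$ is needed to keep $h_X(u)/h_X(u^\ast)$ bounded and to give the Bahadur-Kiefer remainder a genuine polynomial decay. The choice $k_n/\log\log n\to\infty$ from \eqref{kn}--\eqref{knbis} is tuned exactly so that the $\log\log n$ fluctuations of $q_n$ on $\mathcal{I}_n$ are absorbed into a polynomial rate, leaving the announced $n^{-\xi}$.
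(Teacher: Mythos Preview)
Your proposal is correct and follows essentially the same route as the paper: strong approximation of the bivariate empirical process indexed by half-planes from \cite{Ber}, Bahadur--Kiefer to pass from the uniform quantile process to the uniform empirical process, and the Taylor expansion of $F^{-1}$ under $(FG1)$--$(FG2)$ (reusing the $A_nB_n$ computation of Lemma~\ref{hongrois}) to relate $h_X(u)\beta_n^X(u)$ to $\sqrt{n}\,q_n(u)$. The paper orders the three steps differently (it first transfers $\beta_n^X$ to $\beta_n^{X,U}$, then applies Bahadur--Kiefer, and only then invokes \cite{Ber}), makes the rates explicit ($\xi_2$ can be any value below $1/2$ since the Taylor remainder is $O((\log\log n)/\sqrt{n})$, $\xi_1<1/4$ from Bahadur--Kiefer, $\xi_0=\beta_2>1/22$ from \cite{Ber}, so the binding constraint is $\xi<\beta_2$), and notes that the second statement about $(G_k^X,G_k^Y)$ requires the partial-sum version of the approximation in \cite{Ber} rather than just the one-step version.
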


\begin{proof}
Define the two marginal empirical processes to be, for $x\in\mathbb{R}$,%
\begin{align*}
\alpha_{n}^{X}(x)  & =\sqrt{n}(\mathbb{F}_{n}(x)-F(x))=\Lambda_{n}%
(\mathcal{H}_{x}),\\
\alpha_{n}^{Y}(x)  & =\sqrt{n}(\mathbb{G}_{n}(x)-G(x))=\Lambda_{n}%
(\mathcal{H}^{x}).
\end{align*}
Under $(FG1)$ the random variables $U_{i}=F(X_{i})$ and $V_{i}=G(Y_{i})$ are
uniform on $\left(  0,1\right)  $. Write $\alpha_{n}^{X,U}$ and $\alpha
_{n}^{Y,V}$ the uniform empirical process associated to $U_{1},...,U_{n}$ and
$V_{1},...,V_{n}$ respectively. Also write $\mathbb{F}_{X,U,n}$ and
$\mathbb{F}_{X,U,n}^{-1}$ the empirical c.d.f. and quantile functions
then $\beta_{n}^{X,U}(u)=\sqrt{n}(\mathbb{F}_{X,U,n}^{-1}(u)-u)$. Likewise
write $\mathbb{F}_{Y,V,n}$, $\mathbb{F}_{Y,V,n}^{-1}$ and $\beta_{n}^{Y,V}$.
Clearly $\alpha_{n}^{X,U}$ and $\alpha_{n}^{Y,V}$ are not independent, neither
are $\beta_{n}^{X,U}$ and $\beta_{n}^{Y,V}$. What is next obtained for $X$ is
also valid for $Y$.

Under $(FG1)$ and $(FG2)$ the arguments given in Section 5.1.1 yield that%
\begin{equation}
\underset{n\rightarrow+\infty}{\lim}\ \frac{\sqrt{n}}{\log\log n}\sup
_{u\in\mathcal{I}_{n}}\left\vert h_{X}(u)\beta_{n}^{X}(u)-\beta_{n}%
^{X,U}(u)\right\vert =0\quad a.s.\label{betaUXn}%
\end{equation}
since $\beta_{n}^{X,U}=\sqrt{n}q_{n}$ and the supremum is showed to be less
than $\sqrt{n}A_{n}B_{n}$ with the almost sure bounds such that $A_{n}%
<K(\log\log n)/n$ and $B_{n}\rightarrow0$ as $n\rightarrow+\infty$. By \cite{Baha} 
and \cite{Kief} we also have%
\begin{equation}
\underset{n\rightarrow+\infty}{\lim\sup}\ \frac{n^{1/4}}{\sqrt{\log n}%
(\log\log n)^{1/4}}\sup_{u\in\mathcal{I}_{n}}\left\vert \beta_{n}%
^{X,U}(u)+\alpha_{n}^{X,U}(u)\right\vert \leqslant\frac{1}{2^{1/4}}\quad
a.s.\label{bahadur}%
\end{equation}
thus for any $\xi<1/4$ it holds%
\[
\underset{n\rightarrow+\infty}{\lim}\ n^{\xi}\sup_{u\in\mathcal{I}_{n}%
}\left\vert h_{X}(u)\beta_{n}^{X}(u)+\alpha_{n}^{X,U}(u)\right\vert =0\quad
a.s.
\]
It is important here that (\ref{betaUXn}) and (\ref{bahadur}) holds true for
$\beta_{n}^{X,U}$ and $\beta_{n}^{Y,U}$ simultaneously with probability one
whatever the underlying probability space. Hence, recalling that $\alpha
_{n}^{X,U}=\alpha_{n}^{X}\circ F^{-1}$, $\Pi_{n}(\mathcal{H}_{F^{-1}%
(u)})=\mathbb{F}_{n}(F^{-1}(u))$ and $\Pi(\mathcal{H}_{F^{-1}(u)})=u$ it
follows that%
\begin{align*}
\underset{n\rightarrow+\infty}{\lim}\ n^{\xi}\sup_{u\in\mathcal{I}_{n}%
}\left\vert h_{X}(u)\beta_{n}^{X}(u)+\Lambda_{n}(\mathcal{H}_{F^{-1}%
(u)})\right\vert  & =0\quad a.s.\\
\underset{n\rightarrow+\infty}{\lim}\ n^{\xi}\sup_{u\in\mathcal{I}_{n}%
}\left\vert h_{Y}(u)\beta_{n}^{Y}(u)+\Lambda_{n}(\mathcal{H}^{G^{-1}%
(u)})\right\vert  & =0\quad a.s.
\end{align*}
on any probability space. It remains to approximate $\Lambda_{n}$ uniformly on
$\mathcal{C}$. The collection of sets $\mathcal{C}$ is a VC-class of order $3$
thus satisfies the uniform entropy condition $(VC)$ used in \cite{Ber}
with $v_{0}=2(3-1)=4$. By their Proposition 1 taken with $\theta=2$
there exists a probability space on which the sequence $\left\{  (X_{n}%
,Y_{n})\right\}  $ can be built together with a sequence $\mathbb{B}_{n}$ of
$\Pi$-Brownian Bridges indexed by $\mathcal{C}$ such that%
\[
\mathbb{P}\left(  \sup_{A\in\mathcal{C}}\left\vert \Lambda_{n}(A)+\mathbb{B}%
_{n}(A)\right\vert \geqslant\frac{K}{n^{\beta_{2}}}\right)  \leqslant\frac
{1}{n^{2}}%
\]
where we take $\beta_{2}>1/22$ to avoid the $\log n$ factor. Note that since
$\mathbb{B}_{n}$ and $-\mathbb{B}_{n}$ have the same law, we choose to
approximate with $-\mathbb{B}_{n}$. Consider in particular $\mathcal{H}%
_{n}^{X}=\left\{  \mathcal{H}_{F^{-1}(u)}:u\in\mathcal{I}_{n}\right\}
\subset\mathcal{C}$ and define $B_{n}^{X}(u)=\mathbb{B}_{n}\left(
\mathcal{H}_{F^{-1}(u)}\right)  $. On the previous probability space it holds%
\begin{align*}
& \underset{n\rightarrow+\infty}{\lim\sup}\ n^{\beta_{2}}\sup_{u\in
\mathcal{I}_{n}}\left\vert \alpha_{n}^{X}\circ F^{-1}(u)+B_{n}^{X}%
(u)\right\vert \\
& =\underset{n\rightarrow+\infty}{\lim\sup}\ n^{\beta_{2}}\sup_{A\in
\mathcal{H}_{n}^{X}}\left\vert \Lambda_{n}(A)+\mathbb{B}_{n}(A)\right\vert
\leqslant K\quad a.s.
\end{align*}
the above comparision between $h_{X}(u)\beta_{n}^{X}(u)$ and $\alpha_{n}%
^{X}\circ F^{-1}(u)$ gives in turn, for $\xi<\max(1/4,\beta_{2})=\beta_{2}$
and $Z_{n}^{X}(u)=h_{X}(u)\beta_{n}^{X}(u)-B_{n}^{X}(u)$,%
\[
\underset{n\rightarrow+\infty}{\lim\sup}\ n^{\xi}\sup_{u\in\mathcal{I}_{n}%
}\left\vert Z_{n}^{X}(u)\right\vert =0\quad a.s.
\]
In the same way we simultaneously obtain, for $Z_{n}^{Y}(u)=h_{Y}(u)\beta
_{n}^{Y}(u)-B_{n}^{Y}(u)$,%
\[
\underset{n\rightarrow+\infty}{\lim\sup}\ n^{\xi}\sup_{u\in\mathcal{I}_{n}%
}\left\vert Z_{n}^{Y}(u)\right\vert =0\quad a.s.
\]
The processes $B_{n}^{X}$ and $B_{n}^{Y}$ are joint through the leading
process $\mathbb{B}_{n}$, whence the covariance $cov(B_{n}^{X}(u),B_{n}%
^{Y}(v))=L(u,v)-uv$. The second statement to be proved follows by applying
Theorem 1 of \cite{Ber} in place of Proposition 1. If $\beta
_{2}>0$ is chosen small enough the approximating process can be built in the
form $\mathbb{B}_{n}=\sum\nolimits_{k=1}^{n}\mathbb{B}_{k}^{\ast}/\sqrt{n}$
where $\left\{  \mathbb{B}_{k}^{\ast}:k\geqslant1\right\}  $ is a sequence of
independent $\Pi$-Brownian Bridges. Since $\mathbb{B}_{n}$ is again a $\Pi
$-Brownian Bridge, $G_{k}^{X}(u)=\mathbb{B}_{k}^{\ast}(\mathcal{H}_{F^{-1}%
(u)})$ and $G_{k}^{Y}(u)=\mathbb{B}_{k}^{\ast}(\mathcal{H}^{G^{-1}(u)})$ are
standard Brownian Bridges with the desired correlation structure.
\end{proof}

\subsection{Complements on assumptions}

\subsubsection{Regular an smooth slow variation}\label{regvar}

\noindent In this section, we present the regular and slow variation properties needed for assumption (C2). For more details we refer to \cite{Miko,Sene}. For $k\in
\mathbb{N}_{\ast}$ write $\mathcal{C}_{k}$ the set of functions that are $k$
times continuously differentiable on $\mathbb{R}$, and $\mathcal{C}_{0}$ the
set of continuous functions. Let $\mathcal{M}%
_{k}\left(  m,+\infty\right)  $ be the subset of functions $\varphi
\in\mathcal{C}_{k}$ such that $\varphi^{(k)}$ is monotone on $\left(
m,+\infty\right)  $, and hence $\varphi$, $\varphi^{\prime}$, $\varphi
^{\prime\prime}$,...,$\varphi^{(k)}$ are also monotone on $\left(
m,+\infty\right)  $ by changing $m$. Let $\mathcal{M}_{0}\left(
m,+\infty\right)  $ denote the set of continuous functions monotone on
$\left(  m,+\infty\right)  $. Write $RV(\gamma)$ the set of regularly varying
functions at $+\infty$ with index $\gamma\in\mathbb{R}$. They are of the form
$x^{\gamma}L(x)$ with $L\in RV(0)$, which means that given any $\lambda>0$,%
\begin{equation}
\lim_{x\rightarrow+\infty}\frac{L(\lambda x)}{L(x)}=1.\label{slow}%
\end{equation}
If $L\in RV(0)$ is monotone on $\left(  m,+\infty\right)  $ then $L$ is
equivalent at $+\infty$ to a function in $\mathcal{C}_{\infty}\left(
m,+\infty\right)  \cap RV(0)$. Therefore, at the first order, it is not a
restriction to assume that functions of $RV(\gamma)$ are in $\mathcal{M}%
_{k}\left(  m,+\infty\right)  $ as well. Problems however arise with respect
to differentiation. In particular, two apparently close slowly varying
functions may have very different local variations. First consider the smooth
regular variation. Let introduce%
\[
RV_{k}(\gamma,m)=RV(\gamma)\cap\mathcal{M}_{k}\left(  m,+\infty\right)
,\quad\gamma\neq0.
\]
The following statements are taken as $x\rightarrow+\infty$. Assuming that
$k\geqslant1$ and $\gamma\neq0$, if $\varphi\in RV_{k}(\gamma,m)$ then
$\varphi^{\prime}$ is monotone, so that it holds, by the monotone density
theorem,%
\begin{equation}
\varphi^{\prime}(x)\sim\frac{\gamma\varphi(x)}{x}.\label{phiprime}%
\end{equation}
This implies that $\varphi^{\prime}\in RV_{k-1}(\gamma-1,m)$ and, whenever
$k\geqslant2$ and $\gamma\neq1$, $\varphi^{\prime\prime}$ in turns satisfies
$\varphi^{\prime\prime}\in RV_{k-2}(\gamma-2,m)$ and%
\begin{equation}
\varphi^{\prime\prime}(x)\sim\frac{(\gamma-1)\varphi^{\prime}(x)}{x}\sim
\frac{\gamma(\gamma-1)\varphi(x)}{x^{2}}.\label{phiseconde}%
\end{equation}
 For $L\in RV(0)$ it holds, by Karamata's theorem,%
\[
\frac{\int_{m}^{x}L^{\prime}(t)\left(  \frac{L(t)}{tL^{\prime}(t)}\right)
dt}{\int_{m}^{x}L^{\prime}(t)dt}=\frac{1}{L(x)}\int_{m}^{x}\frac{L(t)}%
{t}dt\rightarrow+\infty.
\]
Hence the function $L(t)/tL^{\prime}(t)$ is unbounded and, if $L\in
\mathcal{C}_{1}\left(  m,+\infty\right)  $, continuous on $\left(
m,+\infty\right)  $. It is not very restrictive to exclude functions
$L(t)/tL^{\prime}(t)$ that are asymptotically oscillating and not going to
infinity. We thus assume (\ref{L'}).\\%
For instance, if $L(x)=\varphi(\log x)$ where $\varphi\in RV_{2}(\gamma,m)$
and $\gamma>0$ then $\varepsilon_{1}(x)\sim\gamma/\log x$. Likewise, if
$L(x)=\varphi(L_{1}(x))$ where $\varphi\in RV_{2}(\gamma,m)$ and $\gamma>0$
then we get $\varepsilon_{1}(x)\sim\gamma xL_{1}^{\prime}(x)/L_{1}(x)$. Also
remind the well known representation, for $x\in\left(  m,+\infty\right)  $,%
\[
L(x)=d_{0}(x)\exp\left(  \int_{m}^{x}\frac{\varepsilon_{0}(t)}{t}dt\right)
,\quad d_{0}(x)\rightarrow d_{0}>0,\quad\varepsilon_{0}(x)\rightarrow0.
\]
If $d_{0}(x)$ is constant then $d_{0}=L(m)$ and $\varepsilon_{0}%
(x)=\varepsilon_{1}(x)$ from (\ref{L'}). More generally, (\ref{L'}) is
equivalent to $xd_{0}^{\prime}(x)\rightarrow0$ and we have $\varepsilon
_{1}(x)=\varepsilon_{0}(x)+xd_{0}^{\prime}(x)$.

\subsubsection{A sufficient condition for $(FG)$}\label{SCFG}

In this section we provide a sufficient condition to%
\[
(FG2)\;\sup_{x>m}\left(  1-F(x)\right)  \frac{\left\vert f^{\prime
}(x)\right\vert }{f^{2}(x)}<+\infty,\quad(FG3)\;\sup_{u>F(m)}H_{X}%
(u)<+\infty,
\]
based on standard regular variation or smooth slow variation. Starting from%
\begin{align*}
F(x)  &  =1-\exp(-\psi_{X}(x)),\quad F^{-1}(u)=\psi_{X}^{-1}(\log(1/(1-u))),\\
f(x)  &  =\psi_{X}^{\prime}(x)\exp(-\psi_{X}(x)),\quad h_{X}(u)=(1-u)\psi
_{X}^{\prime}\circ\psi_{X}^{-1}(\log(1/(1-u))),
\end{align*}
we have%
\[
H_{X}(u)=\frac{1-u}{F^{-1}(u)h_{X}(u)}=\left(  \log\psi_{X}^{-1}\right)
^{\prime}(\log(1/(1-u)))
\]
thus $(FG3)$ holds whenever $\left(  \log\psi_{X}^{-1}\right)  ^{\prime}(x)$
is bounded, or $1/x\psi_{X}^{\prime}(x)$ is bounded. Conversely, $(FG3)$
implies that $F^{-1}(u)=O(1/\left(  1-u\right)  ^{K})$ for $K$ bounding
$H_{X}$ since $\left(  \log F^{-1}(u)\right)  ^{\prime}=H_{X}(u)/(1-u)$. In
the same vein, $(FG2)$ is equivalent to%
\begin{equation}
\sup_{m<x<+\infty}\left\vert \left(  \frac{1}{\psi_{X}^{\prime}(x)}\right)
^{\prime}\right\vert <+\infty\label{FG2bis}%
\end{equation}
since $f^{\prime}(x)=\left(  -\psi_{X}^{\prime\prime}(x)-\psi_{X}^{\prime
2}(x)\right)  \exp(-\psi_{X}(x))$ and%
\[
\left(  1-F(x)\right)  \frac{\left\vert f^{\prime}(x)\right\vert }{f^{2}%
(x)}=\left\vert \frac{\psi_{X}^{\prime\prime}(x)+\psi_{X}^{\prime2}(x)}%
{\psi_{X}^{\prime2}(x)}\right\vert =\left\vert \frac{\psi_{X}^{\prime\prime
}(x)}{\psi_{X}^{\prime2}(x)}+1\right\vert =\left\vert \left(  \frac{1}%
{\psi_{X}^{\prime}(x)}\right)  ^{\prime}-1\right\vert .
\]

\begin{proposition}
\label{CSFG}If $\psi_{X}\in RV_{2}^{+}(0,m)$ then $F$ satisfies $(FG)$. If
$\psi_{X}\in RV_{2}(\gamma_{1},m)$ for some $\gamma_{1}>\gamma_{0}>0$ and, if
$\gamma_{1}=1$ assuming also that $\psi_{X}(x)=xL(x)$ with $L^{\prime}\in
RV_{1}(-1,m)$ and (\ref{L'}), then $F$ satisfies $(FG)$ and $(FG3)$ can be
replaced by%
\begin{equation}
H_{X}(u)\leqslant\frac{1}{\gamma_{0}\log(1/(1-u))},\quad u>F(m). \label{HXreg}%
\end{equation}

\end{proposition}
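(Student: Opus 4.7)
The strategy is to verify $(FG2)$ and $(FG3)$ using the equivalent reformulations in terms of $\psi_X$ given just before the statement: $(FG3)$ amounts to $x\psi_X'(x)$ being bounded below for large $x$, and $(FG2)$ amounts to $|(1/\psi_X'(x))'|=|\psi_X''(x)|/\psi_X'(x)^2$ being bounded (see \eqref{FG2bis}). I would then split into the two cases of the proposition and exploit the smooth regular/slow variation calculus summarised in Section \ref{regvar}.

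\textbf{Case $\psi_X\in RV_2^+(0,m)$.} Condition \eqref{L1} applied with $L=\psi_X$ gives $\psi_X'(x)\geq l_1/x$ with $l_1\geq 1$, so $x\psi_X'(x)\geq l_1$ and $(FG3)$ follows directly, with the stronger bound $H_X(u)\leq 1/l_1$. For $(FG2)$ I would use that $\psi_X''\in\mathcal{M}_0$ by the $\mathcal{M}_2$-assumption, so one sign is eventually fixed, and then combine the smoothness condition \eqref{L'} with the monotone density theorem applied to the slowly varying $\psi_X'\in RV(-1)$ to obtain $\psi_X''(x)\sim -\psi_X'(x)/x$. This yields $|\psi_X''(x)|/\psi_X'(x)^2\sim 1/(x\psi_X'(x))\leq 1/l_1$, giving $(FG2)$.

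\textbf{Case $\psi_X\in RV_2(\gamma_1,m)$, $\gamma_1>\gamma_0>0$.} For $(FG3)$ I would apply \eqref{phiprime} to get $\psi_X'(x)\sim \gamma_1\psi_X(x)/x$, whence $x\psi_X'(x)\sim\gamma_1\psi_X(x)\to+\infty$; since $\psi_X(F^{-1}(u))=\log(1/(1-u))$, for any $\gamma_0<\gamma_1$ and all $u$ close enough to $1$,
\[
H_X(u)=\frac{1}{F^{-1}(u)\,\psi_X'(F^{-1}(u))}\leq\frac{1}{\gamma_0\log(1/(1-u))},
\]
which is \eqref{HXreg}. For $(FG2)$ when $\gamma_1\neq 1$, formula \eqref{phiseconde} directly produces $\psi_X''/\psi_X'^2\sim (\gamma_1-1)/(\gamma_1\psi_X(x))\to 0$, which is bounded. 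The borderline case $\gamma_1=1$ is where the extra hypothesis $\psi_X(x)=xL(x)$ with $L'\in RV_1(-1,m)$ intervenes: expanding $\psi_X'(x)=L(x)+xL'(x)$ and $\psi_X''(x)=2L'(x)+xL''(x)$, the monotone density theorem applied to $L'$ gives $L''(x)\sim -L'(x)/x$, so $\psi_X''(x)\sim L'(x)$ and
\[
\frac{|\psi_X''(x)|}{\psi_X'(x)^2}\sim\frac{L'(x)}{L(x)^2}=\frac{\varepsilon_1(x)}{xL(x)}\longrightarrow 0,
\]
which is bounded. Meanwhile $x\psi_X'(x)\sim xL(x)=\psi_X(x)=\log(1/(1-u))$ handles $(FG3)$ and \eqref{HXreg}.

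\textbf{Main obstacle.} The delicate step is $(FG2)$ in the slowly varying case $\gamma_1=0$: one must pass from the first-order information \eqref{L'} on $L=\psi_X$ to a genuine asymptotic for $\psi_X''$, and the monotone density theorem used on $\psi_X'\in RV(-1)$ is the natural but least mechanical ingredient. The $\gamma_1=1$ case is a similar second-order calculation and is the reason the additional assumption $L'\in RV_1(-1,m)$ cannot be avoided.
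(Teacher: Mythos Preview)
Your proposal is correct and follows essentially the same route as the paper: both reduce $(FG2)$ and $(FG3)$ to the reformulations in terms of $\psi_X$ given just before the statement, then handle the slowly varying case via \eqref{L1} and the monotone density theorem for $\psi_X'\in RV(-1)$, and the regularly varying case via \eqref{phiprime}--\eqref{phiseconde}, with the same explicit expansion $\psi_X'=L+xL'$, $\psi_X''=2L'+xL''$ for the borderline $\gamma_1=1$. The only cosmetic differences are that the paper treats the cases in the opposite order and briefly notes that $(FG1)$ is immediate from $F(x)=1-e^{-\psi_X(x)}$ with $\psi_X\in\mathcal{C}_2$, which you leave implicit.
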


\begin{proof}
Clearly $\psi_{X}$ is $\mathcal{C}_{2}$, increases to infinity,
$F(x)=1-e^{-\psi_{X}(x)}$ has unbounded right tail and $f(x)=\psi_{X}^{\prime
}(x)\exp(-\psi_{X}(x))$ is $\mathcal{C}_{1}$ which yields $(FG1)$. Next we
check $(FG2)$ and $(FG3)$ in the two cases.\newline\textbf{Case 1.}\ Assume
that $\gamma_{1}>0$. If $\gamma_{1}\neq1$ then (\ref{phiprime}) and
(\ref{phiseconde}) give, as $x\rightarrow+\infty$,%
\[
\left(  \frac{1}{\psi_{X}^{\prime}(x)}\right)  ^{\prime}=\frac{\psi
_{X}^{\prime\prime}(x)}{\psi_{X}^{\prime2}(x)}\sim\frac{\gamma_{1}-1}%
{\gamma_{1}\psi_{X}(x)}\rightarrow0.
\]
If $\gamma_{1}=1$ then $\psi_{X}(x)=xL(x)$ with $L\in RV_{2}(0,m)$,
$L^{\prime}\in RV_{1}(-1,m)$ and (\ref{L'}) thus $L^{\prime}(x)\sim
-xL^{\prime\prime}(x)$ and $xL^{\prime}(x)/L(x)\rightarrow0$ which entails%
\[
\left\vert \left(  \frac{1}{\psi_{X}^{\prime}(x)}\right)  ^{\prime}\right\vert
=\left\vert \frac{\psi_{X}^{\prime\prime}(x)}{\psi_{X}^{\prime2}%
(x)}\right\vert =\frac{\left\vert 2L^{\prime}(x)+xL^{\prime\prime
}(x)\right\vert }{(L(x)+xL^{\prime}(x))^{2}}\leqslant K\frac{\left\vert
L^{\prime}(x)\right\vert }{L^{2}(x)}\rightarrow0.
\]
Whence (\ref{FG2bis}) and $(FG2)$.\\
Whatever $\gamma_{1}>0$, $\psi_{X}$ is
continuous and strictly increasing with inverse $\psi_{X}^{-1}\in
RV_{2}(1/\gamma_{1},\psi_{X}^{-1}(m))$.\ By using again (\ref{phiprime}) we
obtain%
\[
\left(  \log\psi_{X}^{-1}(x)\right)  ^{\prime}=\frac{1}{\psi_{X}^{-1}%
(x)\psi_{X}^{\prime}\circ\psi_{X}^{-1}(x)}\sim\frac{1}{\gamma_{1}\psi_{X}%
\circ\psi_{X}^{-1}(x)}=\frac{1}{\gamma_{1}x}%
\]
as $x\rightarrow+\infty$, which is bounded. This implies $(FG3)$ and more
accurately (\ref{HXreg}) since for $\gamma_{1}>\gamma_{0}>0$ and $m$
sufficiently large,%
\[
H_{X}(u)=\left(  \log\psi_{X}^{-1}\right)  ^{\prime}(\log(1/(1-u)))\leqslant
\frac{1}{\gamma_{0}\log(1/(1-u))},\quad u>F(m).
\]
\textbf{Case 2.}\ If $\gamma_{1}=0$ then $x\psi_{X}^{\prime}(x)=\varepsilon
_{1}(x)\psi_{X}(x)\geqslant l_{1}\geqslant1$ with $\varepsilon_{1}%
(x)\rightarrow0$ as $x\rightarrow+\infty$, by (\ref{L'}) and (\ref{L1}). Now
$l_{1}/x\leqslant\psi_{X}^{\prime}(x)\leqslant\psi_{X}(x)/x$ implies that
$\psi_{X}^{\prime}\in RV_{1}(-1,m)$ and (\ref{phiseconde}) yields $\psi
_{X}^{\prime\prime}(x)\sim-\psi_{X}^{\prime}(x)/x=-\varepsilon_{1}(x)\psi
_{X}(x)/x^{2}$. It ensues $\left(  1/\psi_{X}^{\prime}(x)\right)  ^{\prime
}=\psi_{X}^{\prime\prime}(x)/\psi_{X}^{\prime2}(x)\sim-1/\varepsilon
_{1}(x)\psi_{X}(x).$ Therefore the upper bound in $(FG2)$ is, for $x\in\left(
m,+\infty\right)  $,%
\[
\left\vert \left(  \frac{1}{\psi_{X}^{\prime}(x)}\right)  ^{\prime
}-1\right\vert =1+\frac{1}{\varepsilon_{1}(x)\psi_{X}(x)}\leqslant\frac
{l_{1}+1}{l_{1}}%
\]
and the upper bound in $(FG3)$ is%
\[
\left(  \log\psi_{X}^{-1}(x)\right)  ^{\prime}=\frac{1}{\psi_{X}^{-1}%
(x)\psi_{X}^{\prime}\circ\psi_{X}^{-1}(x)}=\left(  \frac{1}{\varepsilon
_{1}\psi_{X}}\right)  \circ\psi_{X}^{-1}(x)\leqslant\frac{1}{l_{1}}.
\]
Note that if $\gamma_{1}>0$ we have $\varepsilon_{1}(x)\rightarrow\gamma_{1}$
so the second equality in the left-hand side yields back the sharper bound
$1/\gamma_{1}x$ used for (\ref{HXreg}).
\end{proof}

\begin{corollary}
Let $(C)$ hold with $\gamma>0$. Assume that $F$ and $G$ satisfy $(FG4)$
together with the condition of Proposition \ref{CSFG} with $\gamma_{1}>0$.
Assume that $(CFG)$ holds with $\theta>1$. Then the conclusion of Theorem
\ref{mainth} remains true.
\end{corollary}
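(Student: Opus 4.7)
The strategy is to verify the hypotheses of Theorem \ref{mainth} from those of the corollary, noting that $(CFG)$ is here assumed with $\theta>1$ whereas the nominal threshold when $\gamma>0$ is $\theta>1+\theta_1=2$ (since $\theta_1=1$), so the extra regularity in Proposition \ref{CSFG} must compensate. Applying Proposition \ref{CSFG} to $\psi_X,\psi_Y$, and, per Definition \ref{defhyp}, to their left-tail counterparts, supplies $(FG1)$, $(FG2)$, $(FG3)$ together with the refined bound
\[
H_X(u),\ H_Y(u) \leqslant \frac{K}{\log(1/(1-u))}, \qquad u > F(m),
\]
so that $H_X,H_Y$ decay like $1/\log(1/(1-u))$ at the right tail rather than merely being bounded. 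Condition $(C)$ with $\gamma>0$ and $(FG4)$ are assumed directly.

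Next I revisit each step of the proof of Theorem \ref{mainth} that used the strict form $\theta>1+\theta_1$. In the variance Lemmas \ref{variance} and \ref{variancebis} the choice $\theta_1^{\prime}=1$ is already made when $\gamma>0$, and the critical bounding integral reduces to $\int x^{-1} l(x)^{-(\theta-1)}dx$, which converges for $\theta>1$ since $l(x)\geqslant x^{\gamma/2}$. Similarly, in Lemma \ref{lem:DS} the extreme-tail factor $(\log(n/k_n))^{-\theta}$ only requires $\theta>1$, and in Lemma \ref{cvTn} the first integral involving $t_X$ reduces to the same shape.

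The delicate point is the bound on $J_n$ in Lemma \ref{cvTn} and the Bahadur-type estimate (\ref{DL1}) in Lemmas \ref{lem:truc2} and \ref{cvUMn}, both of which originally required $\theta - \theta_1^{\prime} - 1 > 0$, i.e.\ $\theta>2$. Here the refined bound above lets one replace $\sup_{u\in I_n}H(u)\leqslant K$ by $H(u)\leqslant K/\log(n/h_n)$ on the relevant range, which furnishes an extra $1/\log n$ factor. This compensates the $\log n$ coming from $l\circ F^{-1}(1-k_n/n) \leqslant \log n$ in (\ref{DL1}), and, after the change of variable $u=G(x)$, converts the offending term $\int \psi_Y^{\prime}(x)\,\psi_Y(x)^{-(\theta-1)} dx$ into $\int \psi_Y^{\prime}(x)\,\psi_Y(x)^{-\theta} dx$, which is convergent for $\theta>1$. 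With these sharpened bounds, every step of the proof of Theorem \ref{mainth}, namely the extreme-value control, Lemma \ref{cvTn}, the Brownian approximation of Lemma \ref{cvUMn}, and the Gaussian limit for the middle part, goes through and the conclusion $\sqrt{n}(W_c(\F_n,\G_n)-W_c(F,G))\to \mathcal N(0,\sigma^2(\Pi,c))$ follows. The main obstacle is bookkeeping rather than conceptual: one must verify, at each occurrence of $\theta>1+\theta_1$, that the logarithmic decay of $H_X,H_Y$ absorbs exactly the logarithmic factor coming from compositions with $l\circ\psi_X^{-1}$; no new analytic tool is required.
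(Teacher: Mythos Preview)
Your approach matches the paper's: invoke the sharpened estimate $H_X(u),H_Y(u)\leqslant K/\log(1/(1-u))$ from (\ref{HXreg}) to gain an extra $1/\log n$ factor at the spots where the original proof required $\theta>1+\theta_1=2$. The paper's own argument is terser and singles out only Cases~2 and~3 of Lemma~\ref{lem:truc2}, where plugging $\sup_{u\in I_n}H(u)\leqslant 2/((1-\beta)\gamma_0\log n)$ into (\ref{DL1}) kills the stray $\log n$ and lets one take $\theta'\in(1,\theta)$ in (\ref{knbis}). Your additional observation that the second integral $J_n$ in Lemma~\ref{cvTn} also needs this refinement is well taken: the displayed bound there is $K/(\log n)^{\theta-\theta_1'-1}=K/(\log n)^{\theta-2}$ when $\gamma>0$, and replacing $1/x\leqslant K\psi_Y'(x)$ by the sharper $1/x\leqslant K\psi_Y'(x)/\psi_Y(x)$ (equivalent to (\ref{HXreg})) indeed yields $K/(\log n)^{\theta-1}$ as you indicate. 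Your mention of Lemma~\ref{cvUMn} is superfluous, since (\ref{DLuIMn}) already vanishes from $h_n=n^\beta$ with $\beta>1/2$, independently of $\theta$; but this does no harm.
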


\begin{proof}
The result was proved for $\gamma>0$ and $\theta>2$. The only changes needed
for $\theta>1$ are at cases 2 and 3 in the proof of Lemma \ref{lem:truc2} at
Section 5.1.2. We have $\psi_{Y}\geqslant\psi_{X}$, $\psi_{X}\in RV_{2}%
(\gamma_{1},m)$ and $\gamma_{1}\geqslant\gamma>0$ by (\ref{CFG1ab}). Applying
(\ref{HXreg}) from Proposition \ref{CSFG} yields%
\[
\sup_{u\in I_{n}}H(u)\leqslant\sup_{u\in I_{n}}\frac{2}{\gamma_{0}%
\log(1/(1-u))}\leqslant\frac{2}{(1-\beta)\gamma_{0}\log n}%
\]
and this extra $1/(\log n)$ makes the bounding sequence in (\ref{DL1}) tends
to $0$ provided that $1<\theta^{\prime}<\theta$ in (\ref{knbis}).
\end{proof}

\subsubsection{Consequences of (C2)}

\begin{proposition}
\label{C3rho}Assume $(C2)$. Then it holds%
\[
\rho(\left\vert x+\varepsilon\right\vert )-\rho(x)=k_{0}(x,\varepsilon
)\rho^{\prime}(x)\varepsilon
\]
where, for any $x_{0}>\tau_{1}$,%
\begin{equation}
\lim_{\delta_{0}\rightarrow0}\sup_{x>x_{0}}\sup_{\left\vert \varepsilon
\right\vert l^{\prime}(x)\leqslant\delta_{0}}\left\vert k_{0}(x,\varepsilon
)-1\right\vert =0. \label{DLexplicit}%
\end{equation}
In particular, there exists $\delta_{0}>0$ and $k_{0}>0$ such that, for all
$x>x_{0}$ and $\left\vert \varepsilon\right\vert \leqslant\delta_{0}%
/l^{\prime}(x)$ we have $\left\vert \rho(\left\vert x+\varepsilon\right\vert
)-\rho(x)\right\vert \leqslant k_{0}\rho^{\prime}(x)\left\vert \varepsilon
\right\vert .$
\end{proposition}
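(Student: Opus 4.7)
My plan is to reduce the estimate to a first-order Taylor-type expansion of $\rho = \exp \circ l$ and then exploit the regular variation of $l$ and $l'$ to control the remainder uniformly.

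First I would check that the absolute value can be dropped. By (\ref{L1}), $l'(x) \geq l_1/x \geq 1/x$, so any $\varepsilon$ with $|\varepsilon| l'(x) \leq \delta_0$ satisfies $|\varepsilon| \leq \delta_0 x / l_1 \leq \delta_0 x$. Choosing $\delta_0 < 1$ gives $x+\varepsilon > 0$, hence $|x+\varepsilon| = x+\varepsilon$. The mean value theorem applied to $\rho$ on $[\min(x, x+\varepsilon), \max(x, x+\varepsilon)]$ then produces some $\xi$ in this interval with
\[
\rho(x+\varepsilon) - \rho(x) = \rho'(\xi) \varepsilon,
\]
so that the natural choice is $k_0(x,\varepsilon) = \rho'(\xi)/\rho'(x)$, and the problem reduces to showing this ratio tends to $1$ uniformly in the prescribed regime.

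Since $\rho'(y) = l'(y) e^{l(y)}$, I would factor
\[
\frac{\rho'(\xi)}{\rho'(x)} = \frac{l'(\xi)}{l'(x)} \, \exp\bigl(l(\xi) - l(x)\bigr)
\]
and handle each factor separately. For the exponent, MVT again yields $l(\xi) - l(x) = l'(\eta)(\xi - x)$ for some $\eta$ between $x$ and $\xi$, hence $|l(\xi) - l(x)| \leq (l'(\eta)/l'(x)) \, \delta_0$; once the derivative ratios are controlled, this vanishes with $\delta_0$. For the derivative factor, observe that $|\xi - x|/x \leq \delta_0 / l_1$, so $\xi/x \in [1-\delta_0, 1+\delta_0]$. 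The central tool is then the uniform convergence theorem for regularly varying functions: since $l \in \mathcal{M}_2(\tau_1, +\infty)$ is in $RV(\gamma)$ with $\gamma \geq 0$, the derivative $l'$ belongs to $RV(\gamma-1)$ when $\gamma > 0$ (by (\ref{phiprime})) and is of the form $\varepsilon_1(x) l(x)/x$ with $\varepsilon_1(x) \to 0$ when $\gamma = 0$ (by (\ref{L'})). In both cases, for $x_0'$ large enough,
\[
\sup_{x \geq x_0'} \sup_{\lambda \in [1-\delta_0, 1+\delta_0]} \left| \frac{l'(\lambda x)}{l'(x)} - 1 \right| \longrightarrow 0 \quad \text{as } \delta_0 \to 0.
\]
On the compact remaining range $x \in [x_0, x_0']$, $\rho' \in \mathcal{C}_0$ is positive and $|\varepsilon| \leq \delta_0 / l'(x) \to 0$ uniformly, so continuity of $\rho'$ finishes the job.

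The main technical obstacle will be the $\gamma = 0$ case, where $l'$ is only "almost" slowly varying: the monotonicity of $l''$ built into the definition of $RV_2^+(0,\tau_1)$ together with representation (\ref{L'}) must be combined to guarantee the uniform ratio estimate above (in particular, that the slowly varying factor $\varepsilon_1$ satisfies $\varepsilon_1(\xi)/\varepsilon_1(x) \to 1$ when $\xi/x \to 1$ uniformly in large $x$). The "in particular" statement of the proposition finally follows by picking $\delta_0$ so small that (\ref{DLexplicit}) forces $|k_0(x,\varepsilon)| \leq 2$ for all admissible pairs $(x,\varepsilon)$, which gives the desired bound with $k_0 = 2$.
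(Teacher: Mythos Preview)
Your plan is correct and follows the same architecture as the paper's proof: split $\{x>x_0\}$ into a compact piece $[x_0,M]$ handled by ordinary continuity of $\rho'$, and a tail piece $[M,\infty)$ handled by regular-variation estimates on $l$ and $l'$. The paper organises the tail piece slightly differently: instead of applying the mean value theorem to $\rho$ and studying the ratio $\rho'(\xi)/\rho'(x)$, it writes
\[
\rho(x+\varepsilon)-\rho(x)=\rho(x)\bigl(\exp(l(x+\varepsilon)-l(x))-1\bigr)
\]
and shows directly that $l(x+\varepsilon)-l(x)=(1+o(1))\,l'(x)\varepsilon$ uniformly on $\Delta_0=\{x\ge M,\ |\varepsilon|l'(x)\le\delta_0\}$, splitting into cases $\gamma>0$ (via the factorisation $l(x)=x^\gamma L(x)$) and $\gamma=0$. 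Your MVT route and the paper's exponential-increment route are equivalent reformulations; both reduce to the same uniform control of $l'(\xi)/l'(x)$ for $\xi/x$ close to $1$.

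Two small points. First, your appeal to (\ref{L1}) for the bound $|\varepsilon|\le \delta_0 x$ is only stated for $\gamma=0$; when $\gamma>0$ you should instead use $l'(x)\sim\gamma l(x)/x$ with $l(x)\to\infty$ to get the (stronger) bound $|\varepsilon|/x\le 2\delta_0/(\gamma l(x))$. Second, for the $\gamma=0$ obstacle you flag, the paper does not invoke the UCT on $l'$ but uses the monotonicity of $l'$ (forced by $l\in\mathcal{M}_2$ and $l'\to0$) together with $l''(x)\sim -l'(x)/x$ to obtain
\[
0\le \frac{l'(x-|\varepsilon|)-l'(x)}{l'(x-|\varepsilon|)}\le \sup_{|y-x|\le|\varepsilon|}\frac{|l''(y)\varepsilon|}{l'(y)}\le \frac{2|\varepsilon|}{x}\le 2\delta_0,
\]
which gives exactly the uniform ratio estimate you need without having to establish $l'\in RV(-1)$ first. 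Plugging this second-derivative bound into your MVT framework closes the gap cleanly.
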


\begin{proof}
Fix $x_{0}>\tau_{1}>0$ and let $M>x_{0}$ be as large as needed below.\ If
$\varepsilon=0$ then (\ref{DLexplicit}) requires that $k_{0}(x,0)=1$ for
$x>x_{0}$. For $\varepsilon\neq0$ we distinguish between $x\in\left(
x_{0},M\right)  $ and $x\geqslant M$. In the first case, since $\rho
\in\mathcal{C}_{2}$ under $(C2)$ the Taylor expansion of $\rho$ holds
uniformely on $\left(  x_{0},M\right)  $. Namely, for any $\delta_{0}$ small
enough, $x\in\left(  x_{0},M\right)  $ and $\left\vert \varepsilon\right\vert
\leqslant\varepsilon_{0}=\delta_{0}/\inf\left\{  l^{\prime}(x):x\in\left(
x_{0},M\right)  \right\}  <x_{0}-\tau_{1}$ we have%
\[
\rho(\left\vert x+\varepsilon\right\vert )-\rho(x)=k_{0}(x,\varepsilon
)\rho^{\prime}(x)\varepsilon,\quad k_{0}(x,\varepsilon)=1+\frac{\rho
^{\prime\prime}(x^{\ast})}{2\rho^{\prime}(x)}\varepsilon,
\]
with $x^{\ast}\in\left(  x_{0}-\varepsilon_{0},M+\varepsilon_{0}\right)  $ and
$\left\vert k_{0}(x,\varepsilon)-1\right\vert \leqslant K\delta_{0}$ where
$K<+\infty$ depends on $x_{0},M,\rho$. We deduce that, for any $M>x_{0}$,%
\begin{equation}
\lim_{\delta_{0}\rightarrow0}\sup_{x_{0}<x<M}\sup_{\left\vert \varepsilon
\right\vert l^{\prime}(x)\leqslant\delta_{0}}\left\vert k_{0}(x,\varepsilon
)-1\right\vert =0. \label{xpetit}%
\end{equation}
If $x\geqslant M$ then $l^{\prime}(x)>0$ and we intend to expand%
\begin{equation}
\rho(\left\vert x+\varepsilon\right\vert )-\rho(x)=\rho(x)\left(
\exp(l(\left\vert x+\varepsilon\right\vert )-l(x))-1\right)  .
\label{rhoexpand2}%
\end{equation}
\textbf{Case 1.} Assume $(C2)$ with $\gamma>0$. Write $l(x)=x^{\gamma}L(x)$
where $L\in\mathcal{RV}_{2}(0,\tau_{1})$ satisfies (\ref{L'}). For any
$\delta_{0}\in\left(  0,\gamma l(M)/4\right)  $ define%
\begin{equation}
\Delta_{0}=\left\{  (x,\varepsilon):x\geqslant M,\ \left\vert \varepsilon
\right\vert l^{\prime}(x)\leqslant\delta_{0}\right\}  . \label{delta0}%
\end{equation}
By (\ref{phiprime}), for $M$ large enough and $(x,\varepsilon)\in\Delta_{0}$
it holds $l^{\prime}(x)>\gamma l(x)/2x$, which implies $\left\vert
\varepsilon\right\vert /x\leqslant2\delta_{0}/\gamma l(x)<1/2$ and $\left\vert
x+\varepsilon\right\vert =x+\varepsilon>M/2$. Therefore $\sup_{(x,\varepsilon
)\in\Delta_{0}}\left\vert \varepsilon\right\vert /x\rightarrow0$ as
$\delta_{0}\rightarrow0$ and we have, for $(x,\varepsilon)\in\Delta_{0}$,%
\begin{align}
\frac{l(x+\varepsilon)-l(x)}{x^{\gamma}}  &  =\left(  1+\frac{\varepsilon}%
{x}\right)  ^{\gamma}L(x+\varepsilon)-L(x)\nonumber\\
&  =\frac{\gamma\varepsilon}{x}(1+\delta_{1}(x,\varepsilon))L(x+\varepsilon
)+L(x+\varepsilon)-L(x) \label{phiprimeeps}%
\end{align}
where $\sup_{(x,\varepsilon)\in\Delta_{0}}\left\vert \delta_{1}(\varepsilon
,x)\right\vert \rightarrow0$ as $\delta_{0}\rightarrow0$. By (\ref{L'}) we
also have, for $(x,\varepsilon)\in\Delta_{0}$,%
\[
\left\vert L(x+\varepsilon)-L(x)\right\vert \leqslant\sup_{\left\vert
y-x\right\vert \leqslant\left\vert \varepsilon\right\vert }\left\vert
L^{\prime}(y)\right\vert \left\vert \varepsilon\right\vert =\sup_{\left\vert
y-x\right\vert \leqslant\left\vert \varepsilon\right\vert }\left\vert
\varepsilon_{1}(y)\right\vert \frac{L(y)}{y}\left\vert \varepsilon\right\vert
\]
where $\varepsilon_{1}(y)\rightarrow0$ as $y>x-\left\vert \varepsilon
\right\vert >M/2\rightarrow+\infty$. Moreover, for $\delta=2\delta_{0}/\gamma
l(M)$,%
\[
\frac{1}{L(x)}\sup_{\left\vert y-x\right\vert \leqslant\left\vert
\varepsilon\right\vert }L(y)=\sup_{1-\left\vert \varepsilon\right\vert
/x<\lambda<1+\left\vert \varepsilon\right\vert /x}\frac{L(\lambda x)}%
{L(x)}\leqslant\sup_{1-\delta<\lambda<1+\delta}\frac{L(\lambda x)}{L(x)}%
\]
and the second term has limit $1$ as $x\rightarrow+\infty$ since $L\in RV(0)$.
Hence for any $\varepsilon_{1}>0$, assuming $M$ so large that $\sup
_{y>M/2}\left\vert \varepsilon_{1}(y)\right\vert <\varepsilon_{1}/4$ and
$\delta_{0}$ small ensures that, for $(x,\varepsilon)\in\Delta_{0}$,%
\[
\left\vert L(x+\varepsilon)-L(x)\right\vert \leqslant\frac{\varepsilon_{1}}%
{3}\frac{L(x)}{x-\left\vert \varepsilon\right\vert }\left\vert \varepsilon
\right\vert \leqslant\frac{\varepsilon_{1}}{2}\frac{\left\vert \varepsilon
\right\vert }{x}L(x)
\]
and (\ref{phiprimeeps}) reads%
\[
l(x+\varepsilon)-l(x)=(1+\delta_{2}(x,\varepsilon))\frac{\gamma x^{\gamma
}L(x)}{x}\varepsilon=(1+\delta_{3}(x,\varepsilon))l^{\prime}(x)\varepsilon
\]
 then (\ref{rhoexpand2})
gives%
\[
\frac{\rho(x+\varepsilon)-\rho(x)}{l^{\prime}(x)\rho(x)\varepsilon}=\frac
{\exp(l(x+\varepsilon)-l(x))-1}{l^{\prime}(x)\varepsilon}=1+\delta
_{4}(x,\varepsilon)=k_{0}(x,\varepsilon)
\]
with $\sup_{(x,\varepsilon)\in\Delta_{0}}\left\vert \delta_{k}(\varepsilon
,x)\right\vert <\varepsilon_{1}$ for $k=2,3,4$.
We have proved that for any $\varepsilon_{1}>0$ there exists $M$ such that%
\[
\lim_{\delta_{0}\rightarrow0}\sup_{x\geqslant M}\sup_{\left\vert
\varepsilon\right\vert l^{\prime}(x)\leqslant\delta_{0}}\left\vert
k_{0}(x,\varepsilon)-1\right\vert \leqslant\varepsilon_{1}%
\]
which yields (\ref{DLexplicit}) when combined to (\ref{xpetit}).\smallskip

\noindent\textbf{Case 2.} Assume $(C2)$ with $\gamma=0$. Since $l\in
\mathcal{RV}_{2}^{+}(0,\tau_{1})$, (\ref{L'}) and (\ref{L1}) give%
\[
\varepsilon_{1}(x)=\frac{xl^{\prime}(x)}{l(x)}\geqslant\frac{l_{1}}{l(x)}>0
\]
where $\varepsilon_{1}(x)\rightarrow0$. Thus $l^{\prime}(x)\rightarrow0$ as
$x\rightarrow+\infty$ and $l^{\prime}\in\mathcal{RV}_{2}(-1,\tau_{1})$. Thus
$l^{\prime}$ is decreasing on $\left(  M,+\infty\right)  $ since $l^{\prime
}\in\mathcal{M}_{2}(\tau_{1},+\infty)$. Condider $\delta_{0}\in\left(
0,1/2l_{1}\right)  $ and define $\Delta_{0}$ as in (\ref{delta0}). For
$(x,\varepsilon)\in\Delta_{0}$ it holds%
\[
\frac{\left\vert \varepsilon\right\vert }{x}\leqslant l_{1}\frac{\left\vert
\varepsilon\right\vert }{x}\leqslant l(x)\varepsilon_{1}(x)\frac{\left\vert
\varepsilon\right\vert }{x}=\left\vert \varepsilon\right\vert l^{\prime
}(x)\leqslant\delta_{0}%
\]
hence $\left\vert x+\varepsilon\right\vert =x+\varepsilon>M/2$ again, and%
\[
l^{\prime}(x+\left\vert \varepsilon\right\vert )\left\vert \varepsilon
\right\vert \leqslant\left\vert l(x+\varepsilon)-l(x)\right\vert \leqslant
l^{\prime}(x-\left\vert \varepsilon\right\vert )\left\vert \varepsilon
\right\vert
\]
where, since $l^{\prime\prime}(x)\sim-l^{\prime}(x)/x$ by (\ref{phiseconde}),%
\begin{align*}
0  &  \leqslant\frac{l^{\prime}(x-\left\vert \varepsilon\right\vert
)-l^{\prime}(x)}{l^{\prime}(x-\left\vert \varepsilon\right\vert )}%
\leqslant\sup_{x-\left\vert \varepsilon\right\vert \leqslant y\leqslant
x}\frac{\left\vert l^{\prime\prime}(y)\varepsilon\right\vert }{l^{\prime}%
(y)}\leqslant\frac{\left\vert \varepsilon\right\vert }{x-\left\vert
\varepsilon\right\vert }\leqslant2\delta_{0},\\
0  &  \leqslant\frac{l^{\prime}(x)-l^{\prime}(x+\left\vert \varepsilon
\right\vert )}{l^{\prime}(x)}\leqslant\sup_{x\leqslant y\leqslant x+\left\vert
\varepsilon\right\vert }\frac{\left\vert l^{\prime\prime}(y)\varepsilon
\right\vert }{l^{\prime}(y)}\leqslant\frac{\left\vert \varepsilon\right\vert
}{x}\leqslant\delta_{0}.
\end{align*}
We deduce that for $k=1,2$ and $\sup_{(x,\varepsilon)\in\Delta_{0}}\left\vert
\delta_{k}(\varepsilon,x)\right\vert \rightarrow0$ as $\delta_{0}\rightarrow0$
it holds%
\[
l(x+\varepsilon)-l(x)=(1+\delta_{1}(x,\varepsilon))l^{\prime}(x)\varepsilon
\]
for all $(x,\varepsilon)\in\Delta_{0}$ and, by (\ref{rhoexpand2}),%
\[
\frac{\rho(x+\varepsilon)-\rho(x)}{l^{\prime}(x)\rho(x)\varepsilon}=\frac
{\exp(l(x+\varepsilon)-l(x))-1}{l^{\prime}(x)\varepsilon}=1+\delta
_{2}(x,\varepsilon)=k_{0}(x,\varepsilon)
\]
thus (\ref{DLexplicit}) follows.\smallskip
\end{proof}

\noindent Several arguments exploit the asymptotic convexity of $\rho$ which follows from$(C2)$:

\begin{proposition}
\label{convex}Under $(C2)\ $the function $\rho(x)$ is convex on $\left(
l_{2},+\infty\right)  $ for some $l_{2}>0$. If moreover $l_{1}>1$ it is
strictly convex.
\end{proposition}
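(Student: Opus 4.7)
The plan is to reduce convexity of $\rho$ on a tail to a second-order differential inequality on $l$. Since $(C2)$ gives $\rho(x)=\exp(l(x))$ on $(\tau_1,+\infty)$, two differentiations yield
$$\rho'(x)=l'(x)\rho(x),\qquad \rho''(x)=\big(l''(x)+l'(x)^2\big)\rho(x).$$
As $\rho>0$, convexity of $\rho$ on some interval $(l_2,+\infty)$ is equivalent to $l''(x)+l'(x)^2\geqslant 0$ there, and strict convexity to strict positivity. The whole argument thus reduces to comparing $l'(x)^2$ with $-l''(x)$ asymptotically.

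First I would treat the case $\gamma>0$. Since $l\in RV_2^+(\gamma,\tau_1)$, the smooth regular variation properties~(\ref{phiprime})--(\ref{phiseconde}) give $l'(x)\sim\gamma l(x)/x$ and, for $\gamma\neq 1$, $l''(x)\sim\gamma(\gamma-1)l(x)/x^2$. Therefore
$$\frac{l'(x)^2}{|l''(x)|}\;\sim\;\frac{\gamma^2}{|\gamma-1|}\,l(x)\;\longrightarrow\;+\infty,$$
since $l\in RV(\gamma)$ with $\gamma>0$ implies $l(x)\to+\infty$. The borderline subcase $\gamma=1$ is even more favorable, because then $l''(x)=o(l'(x)/x)$ while $l'(x)^2$ is uniformly bounded below by a positive slowly varying quantity. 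In all subcases $l''+l'^2>0$ eventually, giving strict convexity regardless of the value of $l_1$.

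Next I would treat the harder case $\gamma=0$. Here (\ref{L1}) provides the pointwise lower bound $xl'(x)\geqslant l_1\geqslant 1$, which I would pair with the identity $l'(x)=\varepsilon_1(x) l(x)/x$ from~(\ref{L'}) in order to derive the sharp asymptotic $l''(x)\sim -l'(x)/x$. Once this is in hand,
$$l''(x)+l'(x)^2\;\geqslant\;\bigl(l_1-1+o(1)\bigr)\,\frac{l'(x)}{x}$$
holds for $x$ large, which is $\geqslant 0$ (giving convexity) and strictly positive whenever $l_1>1$ (giving strict convexity).

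The main obstacle will be rigorously establishing $l''(x)\sim -l'(x)/x$ in the slowly varying case, since membership in $RV_2^+(0,\tau_1)$ only imposes monotonicity of $l''$ together with the single regularity estimate~(\ref{L'}) on $\varepsilon_1$, without any direct quantitative control on $\varepsilon_1'$. The plan is to exploit the monotonicity of $l''$ and the relation $xl'/l=\varepsilon_1$ to squeeze $xl''(x)/l'(x)$ between quantities both tending to $-1$, in the same spirit as the squeezing argument already appearing in Case~2 of the proof of Proposition~\ref{C3rho}.
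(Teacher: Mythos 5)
Your argument is correct and mirrors the paper's proof: both reduce convexity of $\rho$ to the sign of $l''+l'^2$ and split on $\gamma>0$ (with $\gamma=1$ treated separately) versus $\gamma=0$, using (\ref{phiprime}), (\ref{phiseconde}), (\ref{L'}) and (\ref{L1}). The step you flag as the main obstacle, establishing $l''(x)\sim -l'(x)/x$ when $\gamma=0$, is exactly what the paper obtains via the monotone-density asymptotics applied to $l'$ (eventually monotone since $l\in\mathcal{M}_2$, and of index $-1$ by (\ref{L'})--(\ref{L1})), which is precisely the squeezing argument you propose to carry out, so there is no genuine gap.
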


\begin{proof}
We have to show that $\rho^{\prime\prime}(x)=(l^{\prime\prime}(x)+l^{\prime
}(x)^{2})\rho(x)\geqslant0$ if $(C2)$ holds. In the case $1\neq\gamma>0$ we
have, by (\ref{phiprime}) and (\ref{phiseconde}), as $x\rightarrow+\infty$,%
\[
l^{\prime}(x)\sim\frac{\gamma l(x)}{x},\quad l^{\prime\prime}(x)\sim
\frac{\gamma(\gamma-1)l(x)}{x^{2}}\ll l^{\prime}(x),\quad\frac{l^{\prime
\prime}(x)}{l^{\prime}(x)}\sim\frac{\gamma-1}{x},
\]
thus there exists $l_{2}>l^{-1}(1/\gamma)$ such that all $x>l_{2}$ satisfy
$l^{\prime}(x)>0$ and%
\[
l^{\prime\prime}(x)+l^{\prime}(x)^{2}\sim l^{\prime}(x)\left(  \frac{\gamma
-1}{x}+\frac{\gamma l(x)}{x}\right)  \geqslant\frac{l^{\prime}(x)}{x}\left(
\gamma l(x)-1\right)  >0.
\]
If $\gamma=1$ then $l(x)=xL(x)$ and $l^{\prime\prime}(x)=2L^{\prime
}(x)+xL^{\prime\prime}(x)\sim L^{\prime}(x)$ whereas $l^{\prime}(x)^{2}%
\sim(L(x)+xL^{\prime}(x))^{2}\sim L^{2}(x)$. Since $L^{\prime}(x)/L^{2}%
(x)=\varepsilon_{1}(x)/xL(x)\rightarrow0$ we have $l^{\prime\prime
}(x)+l^{\prime}(x)^{2}>0$ for $x>l_{2}$. If $\gamma=0$ in $(C2)$ then by
(\ref{L'}) and (\ref{L1}) we have%
\[
xl^{\prime}(x)=\varepsilon_{1}(x)l(x)\geqslant l_{1}\geqslant1,\quad
\varepsilon_{1}(x)\rightarrow0,
\]
hence%
\[
\frac{l_{1}}{x}\leqslant l^{\prime}(x)\leqslant\frac{l(x)}{x}%
\]
and $l^{\prime}\in RV_{1}^{+}(-1,0)$. Now by (\ref{phiseconde}) we get, as
$x\rightarrow+\infty$,%
\[
l^{\prime}(x)\sim\frac{\varepsilon_{1}(x)l(x)}{x},\quad l^{\prime\prime
}(x)\sim-\frac{l^{\prime}(x)}{x}=-\frac{\varepsilon_{1}(x)l(x)}{x^{2}}%
,\quad\frac{l^{\prime\prime}(x)}{l^{\prime}(x)}\sim-\frac{1}{x},
\]
so that%
\[
l^{\prime\prime}(x)+l^{\prime}(x)^{2}\sim\frac{l^{\prime}(x)}{x}\left(
\varepsilon_{1}(x)l(x)-1\right)  \geqslant\frac{l^{\prime}(x)}{x}\left(
l_{1}-1\right)  .
\]
Therefore if $l_{1}>1$ $\rho(x)$ is strictly convex on $\left(  l_{2}%
,+\infty\right)  $ for $l_{2}$ large enough. It remains convex for $l_{1}=1$.
\end{proof}

\bibliographystyle{plain}
\bibliography{TCLbib_HAL}

\def\cprime{$'$}
\begin{thebibliography}{10}

\bibitem{Delbarrio11}
P.~C. {\'A}lvarez-Esteban, E.~del Barrio, J.~A. Cuesta-Albertos, and
  C.~Matr{\'a}n.
\newblock Uniqueness and approximate computation of optimal incomplete
  transportation plans.
\newblock {\em Ann. Inst. Henri Poincar\'e Probab. Stat.}, 47(2):358--375,
  2011.

\bibitem{Baha}
R.~R. Bahadur.
\newblock A note on quantiles in large samples.
\newblock {\em Ann. Math. Statist.}, 37:577--580, 1966.

\bibitem{Ber}
P.~Berthet and D.~M. Mason.
\newblock Revisiting two strong approximation results of {D}udley and
  {P}hilipp.
\newblock In {\em High dimensional probability}, volume~51 of {\em IMS Lecture
  Notes Monogr. Ser.}, pages 155--172. Inst. Math. Statist., Beachwood, OH,
  2006.

\bibitem{Ledoux17}
S.~G. Bobkov and M.~Ledoux.
\newblock One-dimensional empirical measures, order statistics and kantorovich
  transport distances.
\newblock {\em To appear in: Memoirs of the AMS}, Preprint 2016.

\bibitem{Cambanis76}
S.~Cambanis, G.~Simons, and W.~Stout.
\newblock Inequalities for {$Ek(X,Y)$} when the marginals are fixed.
\newblock {\em Z. Wahrscheinlichkeitstheorie und Verw. Gebiete},
  36(4):285--294, 1976.

\bibitem{Csorg}
M.~Cs\"org\"o and P.~R\'ev\'esz.
\newblock Strong approximations of the quantile process.
\newblock {\em Ann. Statist.}, 6(4):882--894, 1978.

\bibitem{Delbarrio99}
E.~del Barrio, E.~Gin\'e, and C.~Matr\'an.
\newblock Central limit theorems for the {W}asserstein distance between the
  empirical and the true distributions.
\newblock {\em Ann. Probab.}, 27(2):1009--1071, 1999.

\bibitem{Delbarrio03}
E.~del Barrio, E.~Gin\'e, and C.~Matr\'an.
\newblock Correction: ``{C}entral limit theorems for the {W}asserstein distance
  between the empirical and the true distributions'' [{A}nn. {P}robab. {\bf 27}
  (1999), no. 2, 1009--1071; {MR}1698999 (2000g:60034)].
\newblock {\em Ann. Probab.}, 31(2):1142--1143, 2003.

\bibitem{Delbarrio05}
E.~del Barrio, E.~Gin\'e, and F.~Utzet.
\newblock Asymptotics for {$L_2$} functionals of the empirical quantile
  process, with applications to tests of fit based on weighted {W}asserstein
  distances.
\newblock {\em Bernoulli}, 11(1):131--189, 2005.

\bibitem{dBL}
E.~del Barrio and J.-M. Loub\`es.
\newblock Central limit theorems for empirical transportation cost in general
  dimension.
\newblock {\em arXiv:1705.01299v2}, may 2017.

\bibitem{Kief}
J.~Kiefer.
\newblock Deviations between the sample quantile process and the sample {${\rm
  df}$}.
\newblock In {\em Nonparametric {T}echniques in {S}tatistical {I}nference
  ({P}roc. {S}ympos., {I}ndiana {U}niv., {B}loomington, {I}nd., 1969)}, pages
  299--319. Cambridge Univ. Press, London, 1970.

\bibitem{Miko}
T.~Mikosch, Operations~Research EURANDOM European Institute~for Statistics,
  Probability, and their Applications.
\newblock {\em Regular Variation, Subexponentiality and Their Applications in
  Probability Theory}.
\newblock EURANDOM report. Eindhoven University of Technology, 1999.

\bibitem{rippl}
T.~Rippl, A.~Munk, and A.~Sturm.
\newblock Limit laws of the empirical {W}asserstein distance: {G}aussian
  distributions.
\newblock {\em J. Multivariate Anal.}, 151:90--109, 2016.

\bibitem{Sene}
E.~Seneta.
\newblock {\em Regularly varying functions}.
\newblock Lecture Notes in Mathematics, Vol. 508. Springer-Verlag, Berlin-New
  York, 1976.

\bibitem{Munk}
M.~{Sommerfeld} and A.~{Munk}.
\newblock {Inference for Empirical Wasserstein Distances on Finite Spaces}.
\newblock {\em ArXiv e-prints}, October 2016.

\bibitem{villani2003tot}
C.~Villani.
\newblock {\em {Topics in Optimal Transportation}}.
\newblock American Mathematical Society, 2003.

\end{thebibliography}
\end{document}